\numberwithin{equation}{section}
\newcommand{\R}{\mathbb{R}}
\newcommand{\N}{{\mathbb N}}
\newcommand{\K}{\mathbb{K}}
\newcommand{\E}[1]{{\mathbb E}\left[#1\right]}
\newcommand{\p}[1]{{\mathbb P}\left(#1\right)}
\newcommand{\I}[1]{{\mathbf 1}_{[#1]}}
\newcommand{\instep}{\hspace*{0.3cm}}
\newtheorem{theorem}{Theorem}[section]
\newtheorem{lem}[theorem]{Lemma}
\newtheorem{prop}[theorem]{Proposition}
\newtheorem{cor}[theorem]{Corollary}
\newtheorem{fact}[theorem]{Fact}
\newtheorem{rmk}[theorem]{Remark}
\newcommand\cB{\mathcal B}
\newcommand\cF{\mathcal F}
\newcommand\cM{\mathcal M}
\newcommand\cP{\mathcal P}
\newcommand\cS{{\mathcal S}}
\newcommand\cT{{\mathcal T}}
\newcommand\cU{{\mathcal U}}
\newcommand{\rB}{\mathrm{B}} 
\newcommand{\rD}{\mathrm{D}}
\newcommand{\rH}{\mathrm{H}}
\newcommand{\rP}{\mathrm{P}}
\newcommand{\rT}{\mathrm{T}}
\newcommand{\rZ}{\mathrm{Z}}
\newcommand{\rd}{\mathrm{d}} 
\newcommand{\bT}{\mathbf{T}} 
\newcommand{\bV}{\mathbf{V}}
\newcommand{\bn}{\mathbf{n}}
\newcommand{\be}{\mathbf{e}}
\newcommand{\refT}[1]{Theorem~\ref{#1}}
\newcommand{\refC}[1]{Corollary~\ref{#1}}
\newcommand{\refL}[1]{Lemma~\ref{#1}}
\newcommand{\refS}[1]{Section~\ref{#1}}
\newcommand{\refP}[1]{Proposition~\ref{#1}}
\newcommand{\refFt}[1]{Fact~\ref{#1}}
\newcommand{\pran}[1]{\left(#1\right)}
\providecommand{\eps}{}
\renewcommand{\eps}{\varepsilon}
\providecommand{\veps}{}
\renewcommand{\veps}{\varepsilon}
\providecommand{\ora}[1]{}
\renewcommand{\ora}[1]{\overrightarrow{#1}}
\newcommand{\dgr}{\ensuremath{\rd_{\mathrm{gr}}}}
\newcommand{\dleb}{\ensuremath{\rd_{\mathrm{len}}}}
\newcommand{\dghp}{\ensuremath{\rd_{\mathrm{GHP}}}}
\newcommand{\dgh}{\ensuremath{\rd_{\mathrm{GH}}}}
\newcommand{\dis}{\mbox{dis}}
\newcommand{\eqdist}{\ensuremath{\stackrel{\mathrm{d}}{=}}}
\renewcommand{\leq}{\leqslant}
\renewcommand{\geq}{\geqslant}
\renewcommand{\phi}{\varphi}
\newcommand{\eq}{\eqref}
\newcommand{\bigo}{\mathrm{O}}
\newcommand{\Bi}{\mathop{\mathrm{Bi}}}
\newcommand{\IE}{\mathbbm{E}}
\newcommand{\IP}{\mathbbm{P}}
\newcommand{\law}{\mathscr{L}}
\def\be#1{\begin{equation*}#1\end{equation*}}
\def\ben#1{\begin{equation}#1\end{equation}}
\def\ba#1{\begin{align*}#1\end{align*}}
\def\bklr#1{\bigl(#1\bigr)}
\def\bbklr#1{\Bigl(#1\Bigr)}
\def\bkle#1{\bigl[#1\bigr]}
\def\bklg#1{\bigl\{#1\bigr\}}
\def\abs#1{\vert#1\vert}
\def\floor#1{{\lfloor#1\rfloor}}
\def\B{{\mathrm{Beta}}}
\def\bPclas#1#2#3{{\mathcal{P}\bklr{{\textstyle{#1\atop #2}};#3}}}
\def\bPimm#1#2#3#4{{\mathcal{P}_{\mathrm{Im}}^#1\bklr{{\textstyle{#2\atop #3}};#4}}}
\def\bbPimm#1#2#3#4{{\mathcal{P}_{\mathrm{Im}}^#1\bbklr{{\textstyle{#2\atop #3\strut}};#4}}}
\xdef\oclock{\the\count1:0\the\count255}
\xdef\oclock{\the\count1:\the\count255}\fi
\DeclareRobustCommand{\SkipTocEntry}[5]{}
\begin{document}

\title{Scaling limits for some random trees constructed inhomogeneously}
\author{Nathan Ross and Yuting Wen}
\address{School of Mathematics and Statistics, University of Melbourne, Parkville, VIC, Australia}
\date{\today{}}
\keywords{Scaling limit, continuum random tree, Gromov-Hausdorff-Prokhorov topology, generalized P\'olya urn, line-breaking}
\maketitle

\begin{abstract}
We define some new sequences of recursively constructed random combinatorial trees,
and show that,  after properly rescaling graph distance and equipping the trees with the uniform measure on vertices, each sequence converges almost surely to a real tree in the Gromov-Hausdorff-Prokhorov sense.
The limiting real trees are constructed via line-breaking the real half-line with a Poisson process having rate $(\ell+1)t^\ell dt$, for each positive integer $\ell$, and the growth of the combinatorial trees may be viewed as an inhomogeneous generalization of R\'emy's algorithm.
\end{abstract}


\section{Introduction}
\label{sec:intro}

Understanding the structure of large random trees and graphs
is an important topic of much recent interest in mathematics, statistics, and science. Random trees appear in population genetics and computer science, and statistical data with network structure is now generated in many fields. One important approach to studying a large random discrete structure is to determine limiting behavior as its size tends to infinity, in particular the structure may converge in a suitable sense to a limit object. 
Two well-known illustrations of this approach are the classical functional central limit theorem and the recently developed notion of dense graph limits (so-called graphons). In this paper we are interested in a third setting that has been an important and active research area for the last 25 years: continuum tree limits of combinatorial (i.e., graph-theoretic) trees; here trees are viewed as measured metric spaces and convergence is in the Gromov-Hausdorff-Prokhorov (GHP) topology.
Necessary background on GHP topology  is provided in Section~\ref{sec:gh},
but roughly speaking, two measured metric spaces are close in the GHP topology if each can be isometrically embedded into a common metric space so that both of their Hausdorff distance and the L\'evy-Prokhorov distance between the push-forwards of their measures are small.

To clarify the upcoming discussion, we first mention how trees are viewed as metric spaces; see \citet{E} for a more thorough treatment.  Throughout the article, trees are {\em not} embedded in the plane (i.e., unordered). A compact metric space $(\cT,\dleb)$ is a {\em real tree} if the following two properties hold for every $x,y\in\cT$.
\begin{enumerate}
\item There is a unique isometric map $f_{x,y}$ from $[0,\dleb(x,y)]$ into $\cT$ such that $f_{x,y}(0)=x$ and $f_{x,y}(\dleb(x,y))=y$.
\item If $g$ is a continuous injective map from $[0,1]$ into $\cT$ such that $g(0)=x$ and $g(1)=y$, then we have $g([0,1]) = f_{x,y}([0,\dleb(x,y)])$.
\end{enumerate} 
We call the metric $\dleb$ the {\em intrinsic length} metric on $\cT$. For every $x,y \in\cT$, we call $f_{x,y}$ a (non-graph-theoretic) {\em path} in $\cT$, and denote by $|f_{x,y}| \coloneqq \dleb(x,y)$ the intrinsic length of the path. For ease of notation, write $\cT$, instead of $(\cT,\dleb)$, for a real tree. A {\em leaf} of a real tree $\cT$ is a point $x\in \cT$ such that $\cT\setminus \{x\}$ is connected.

To emphasize the difference from real trees, we call graph-theoretic trees {\em combinatorial trees}. Given a combinatorial tree $T$, let $v(T)$ be the vertex-set of $T$, denote by $\dgr$ the graph distance on $T$, and view $T$ as the metric space $(v(T),\dgr)$. All edges and paths in a combinatorial tree are of graph-theoretic sense (i.e., an edge has length $1$, and the length of a path is the number of edges in it). We often consider {\em rooted trees} which are pairs $(T,u)$, where $T$ is a combinatorial (resp. real) tree, and $u$ is a distinguished vertex (resp. point) of $T$. We call $u$ the root of $(T,u)$.

The fundamental results for tree convergence in our setting are due to
\citet{A1,A2,A3}, who constructed and studied a limit object now called the  Brownian continuum random tree (BCRT). Aldous showed that the BCRT is the 
\begin{enumerate}
\item limit as the number of vertices tends to infinity of certain random combinatorial trees with rescaled edge-lengths (more specifically, the combinatorial trees are those formed from a critical Galton-Watson branching process with finite variance offspring distribution and conditioned on their numbers of vertices),
\item limit of a Poisson line-breaking construction,
\item real tree with contour process equal in distribution to Brownian excursion, and
\item real tree having a certain finite-dimensional distribution on $k$-leaf trees obtained as subtrees spanned by the root and $k$ leaves chosen independently according to a mass measure.
\end{enumerate}

There has been an enormous amount of literature extending, generalizing, and embellishing the results of \cite{A1,A2,A3}. One direction of extension is showing convergence of other families of rescaled combinatorial trees to the BCRT;  see, e.g., \citet{HM, K, MM, Riz}. Another type of extension, and that considered in this paper, is constructing and studying other continuum random trees (CRTs) via some analog of part or all of Items (1-4) above. Well-known examples are the inhomogeneous CRT; see \citet{AP1, AP2, AMP}; the self-similar fragmentation trees; see \citet{HM}; and stable trees; see \citet{D, GH}; see also the references in those papers. 

The general versions of the constructions of Items~(1) and~(2) are most important for this paper. Focusing on Item~(1),
an important class of combinatorial trees that converge to CRTs are those given by various recursive constructions. In these models, a growing sequence of random combinatorial trees $(\rT(n):n\in\N)$ is defined so that $\rT(n+1)$ is constructed conditional on $\rT(n)$ by adding vertices and edges according to specified random rules. Examples of such constructions are R\'emy's algorithm for recursively constructing uniformly chosen leaf-labeled full binary trees; \cite{Remy}; Marchal's generalization of R\'emy's algorithm; \cite{M}; Ford's $\alpha$-model and generalizations; \citet{CFW}; and others: \citet{HS}, \citet{PW09}, \citet{PRW}, \citet{PW}, \citet{RW}. 

The (sometimes Poisson) line-breaking constructions of Item~(2) starts with a sequence of growing random real trees $(\cT_k:k\in\N)$, and then a real tree is defined to be the closure of the union of the sequence. The sequence is recursively constructed: given $\cT_k$, we create $\cT_{k+1}$ by attaching the end of a branch of a random length to a randomly chosen point of~$\cT_k$. To describe the Poisson line-breaking construction of the scaled BCRT, let $C_1, C_2, \ldots$ be the points of an inhomogeneous Poisson process on $(0,\infty)$ with intensity measure $2t dt$. Then we set $\cT_1$ to be a single branch of length $C_1$, and recursively construct $\cT_{k+1}$ from $\cT_k$ by attaching the end of a branch of length $C_{k+1}-C_{k}$ to a uniform point of $\cT_k$. The closure of this sequence is a compact metric space with a measure supported on the leaves that is the weak limit of the uniform measure on the sequence trees.
An important remark for our purposes is that it is possible to embed R\'emy's algorithm into this Poisson line-breaking construction of BCRT, and this embedding can be used to show that uniformly chosen full binary trees with rescaled edge-lengths converge to the BCRT as the number of vertices goes to infinity. Similarly, Marchal's algorithm can be embedded into a line-breaking construction of stable trees, and this embedding can be used to show convergence of Marchal's trees with rescaled edge-lengths to continuum stable trees \cite{GH}.

In this paper, we extend these ideas by defining a new family of sequences of growing recursively constructed combinatorial trees in the spirit of R\'emy's algorithm and show these sequences of trees can be embedded into appropriate Poisson line-breaking constructions. We use this embedding to show that the sequence of combinatorial trees, equipped with the uniform measure on vertices, almost surely converges in the GHP topology to the closure of the union of the line-breaking constructed trees equipped with a probability measure supported on the leaves.  \citet{CH} recently systematically studied the trees that appear as limits, and determined useful properties regarding compactness, boundedness, asymptotic height, and Hausdorff dimension.
See also the recent works of \citet{ADGO} and \citet{H2} for related constructions and discussions.

\subsection{Main result}
For each~$\ell\in\N:=\{1,2,3,\ldots\}$, we define a sequence of growing random combinatorial trees endowed with the uniform probability measure and a real tree limit.
 To ease notation, fix $\ell\in\N$.  

\subsubsection*{Construction of the combinatorial trees}  Consider growing a sequence of random combinatorial trees $(\rT(n):n\in\N)$ in the following inhomogeneous manner. Let $\rT(0)$ be a single (graph-theoretic) edge, call one endpoint a leaf, denoted by $L_1$, and call the other endpoint the root of $\rT(0)$, denoted by $v_0$. For each $n\in\N$, given $\rT(n-1)$, insert a new vertex $v_{n}$ in the interior of a uniformly chosen edge of $\rT(n-1)$. If $\ell$ divides $n$, then, at the same time as $v_n$ appears, insert an edge connecting $v_n$ and a new leaf, denoted by $L_{1+\frac{n}{\ell}}$. The resulting tree $\rT(n)$ is rooted at $v_0$. Note that for $k\in\N$, $\rT(k\ell-1)$ has $k$ leaves and $\rT(k\ell)$ has $k+1$ leaves.
In addition, for all $n\in\N$, let $\nu_n$ be the uniform probability measure over~$v(\rT(n))$. Note that for $\ell\geq 2$ there are degree-2 vertices in the trees and that the case $\ell=1$ coincides with R\'emy's algorithm~\cite{Remy}. 

\subsubsection*{Construction of the limiting real trees} 
The limiting real trees have been recently studied~\citep{CH} and are
generalizations of the line-breaking construction for the BCRT described above and due to \citet{A3}. Given $a=(a_k:k\in\N)\subset \R_+$, we construct a sequence of random real trees $(\cT^a_k:k\in\N)$ by starting with $\cT^a_1$, which is made of a single branch of length $a_1$. For integer $k\ge2$, we recursively construct $\cT^a_k$ from $\cT^a_{k-1}$ by attaching the end of a branch of length $a_{k}$ to a point chosen uniformly from $\cT^a_{k-1}$. For all $k\in\N$, root $\cT^a_k$ at an arbitrarily fixed end of the initial branch. Furthermore, let $\cT^a$ be the closure of $\cT^a_k$ as $k\to\infty$. Next, write $C_{0}=0$, and let $C_1,C_2,\ldots$ be the times in $(0,\infty)$ of an inhomogeneous Poisson process of rate $(\ell+1)t^\ell dt$. For all $k\in\N$, write $\cT_k = \cT^{(C_k-C_{k-1}:k\in\N)}_k$. Finally, let $\cT$ be the completion of $\cT_k$ as $k\to\infty$, which is a random real tree with intrinsic length metric $\dleb$. 
\citet{CH} show that the limit tree~$\cT$ is almost surely compact and has a natural  ``uniform" probability measure supported on the leaves; see Theorems~\ref{thm:ch} and~\ref{thm:meas} below. 
Note that the case $\ell=1$ is exactly the Poisson line-breaking construction of the BCRT.

We can now state our main result.
For any $a>0$, write $a\cdot \dgr$ for the metric so that $(a\cdot \dgr)(x,y) = a\cdot \dgr(x,y)$.
For the  remainder of the paper, define
\[
\alpha=\alpha(\ell) = \frac{\ell}{\ell+1}~\mbox{ and }~ c=c(\ell)=\frac{\ell^\alpha}{\ell+1}.
\]

\begin{theorem}\label{thm:gh}
There is a probability space where we can construct copies of $(\rT(n):n\in\N)$ and $(\cT_k:k\in\N)$ such that the following holds. There almost surely exists a probability measure $\mu$ supported by the leaves of $\cT$ such that
\[
\left(v(\rT(n)),\frac{c}{n^{\alpha}}\cdot \dgr,\nu_n\right) \to (\cT,\dleb,\mu) 
\]
almost surely for the Gromov-Hausdorff-Prokhorov topology as $n\to\infty$.
\end{theorem}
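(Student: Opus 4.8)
The plan is to construct both processes on a common probability space by embedding the combinatorial trees $(\rT(n))$ into the line-breaking trees $(\cT_k)$, following the strategy Aldous used to couple R\'emy's algorithm with the Poisson line-breaking construction of the BCRT. First I would fix the Poisson process with rate $(\ell+1)t^\ell dt$ giving the cut-times $C_1,C_2,\ldots$ and the associated real trees $\cT_k$. The key observation is that the insertion rule for $\rT(n)$ is designed so that, after $k\ell-1$ steps, $\rT(k\ell-1)$ has exactly $k$ leaves and $k\ell$ edges, and the combinatorial structure of $\rT(k\ell-1)$ together with the edge-lengths of $\cT_k$ should match up to a rescaling. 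More precisely, I would show that there is a way to assign i.i.d.\ uniform branch points so that the discrete tree $\rT(k\ell-1)$, with each edge rescaled to have length governed by the appropriate increment, is precisely (a version of) $\cT_k$; the intermediate steps $\rT(k\ell), \ldots, \rT((k+1)\ell-1)$ correspond to subdividing edges of $\cT_k$ with degree-2 vertices and then attaching the next leaf at time $C_{k+1}$. The combinatorial mechanism — uniform edge selection for insertion — must be reconciled with the continuum mechanism — uniform (by length) point selection on $\cT_k$; this reconciliation is where the specific rate $(\ell+1)t^\ell dt$ and the choice of when to attach leaves (every $\ell$-th step) enter, and verifying it rigorously via an exchangeability/Pólya-urn argument for the edge-lengths is the technical heart of the coupling.

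Second, granting the coupling, I would establish the metric convergence. The point is that under the embedding, $(v(\rT(n)), \dgr)$ and $(\cT_{\lceil n/\ell\rceil}, \dleb)$ live in the same ambient real tree, so their Gromov-Hausdorff distance is controlled by (i) the discrepancy between $\dgr$ rescaled by $c/n^\alpha$ and $\dleb$ on the embedded vertex set, and (ii) the fact that $\cT_{\lceil n/\ell\rceil}$ is dense in $\cT$ for large $n$ since $\cT$ is the closure of $\bigcup_k \cT_k$. For (i), the crucial estimate is that the length in $\cT_k$ of the branch attached at step $j$, which under the coupling equals $C_j - C_{j-1}$, is asymptotically $c\,\ell^{-\alpha}/((j\ell)^{\alpha})$-ish, so that graph distance (counting edges/subdivisions) rescaled by $c/n^\alpha$ converges to intrinsic length; this is a law-of-large-numbers statement for the Poisson increments $C_k - C_{k-1}$, whose order is governed by $C_k \sim (k)^{1/(\ell+1)}$ from the rate $(\ell+1)t^\ell dt$ (indeed $\E[C_k^{\ell+1}] = k$ up to constants). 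I would make this quantitative with a concentration bound, uniformly in the edge, using that the number of degree-2 subdivisions of a given branch also concentrates. The normalizing constants $\alpha = \ell/(\ell+1)$ and $c = \ell^\alpha/(\ell+1)$ are exactly what make the discrete and continuum length scales agree in the limit.

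Third, I would handle the measure convergence: $\nu_n$ is uniform on $v(\rT(n))$, which has $\sim n$ vertices, and I must show its pushforward under the embedding converges weakly to the limiting ``uniform'' measure $\mu$ on the leaves of $\cT$ (whose existence is given by Theorems~\ref{thm:ch} and~\ref{thm:meas}). The idea is that most vertices of $\rT(n)$ are the degree-2 subdivision points, which are spread along the branches proportionally to length, so $\nu_n$ is close to the length measure on $\cT_{\lceil n/\ell\rceil}$ renormalized; one then invokes the characterization of $\mu$ as the a.s.\ weak limit of (normalized) length measure along the line-breaking construction, or alternatively identifies $\mu$ through its finite-dimensional marginals on subtrees spanned by finitely many leaves, matching them with the discrete picture. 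Finally I would assemble (i), (ii), and the measure estimate into a Gromov-Hausdorff-Prokhorov bound that goes to zero almost surely, taking $n\to\infty$ along all integers (not just multiples of $\ell$) by noting that $\rT(n)$ for $k\ell \le n < (k+1)\ell$ differs from $\rT(k\ell-1)$ only by $O(1)$ vertices and bounded additional length.

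The main obstacle I expect is the first step: proving that the edge-length statistics of the embedded combinatorial tree genuinely match the increments of the given Poisson process. This requires identifying the right generalized Pólya-urn (the keyword in the abstract) describing how, when a uniformly chosen edge is subdivided, the ``mass'' (eventual length) splits, showing the resulting split is a Dirichlet/Beta of the correct parameters depending on $\ell$, and then checking that the aggregated branch lengths have the law of the spacings $C_k - C_{k-1}$ of the rate-$(\ell+1)t^\ell dt$ process. Everything downstream — the metric and measure convergence — is then a relatively standard concentration-and-density argument of the type used for R\'emy and Marchal, but this algebraic/distributional identity is model-specific and is where the inhomogeneity genuinely bites.
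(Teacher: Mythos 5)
Your first step (the coupling via uniform branch points and an exchangeability/Dirichlet--P\'olya identity for edge lengths) is the paper's Proposition~\ref{prop:dist}, proved there via the embellished trees and Lemma~\ref{lem:frag}; your diagnosis of where the inhomogeneity ``bites'' is right and your expectation for how it is resolved is in line with the actual argument.

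The gap is in your second and third steps. You propose to compare $\rT(n)$ directly with $\cT_{\lceil n/\ell\rceil}$, i.e.\ with the full line-breaking tree that has had all $\lceil n/\ell\rceil$ branches attached, and to establish this comparison by a concentration bound that is ``uniform in the edge.'' That uniformity fails for the recently attached branches: a branch inserted at time $j\ell$ with $j$ close to $n/\ell$ carries only $O(1)$ subdivision vertices, and the number of such vertices is \emph{not} concentrated around $(n^\alpha/c)\cdot (C_{j+1}-C_j)$ -- there is no law of large numbers to invoke on a segment that has been subdivided a bounded number of times. The paper's concentration result (Lemma~\ref{lem:convas}) and the union-bound covering argument behind Proposition~\ref{prop:finite} are explicitly restricted to the subtree $\rT_{k(n)}(n)$ spanned by only $k(n)$ leaves with $k(n)=o(n^{1/10})$, precisely so that every branch being compared has been subdivided $\Theta(n^\alpha/k(n)^{\alpha})$ times and Chernoff bounds are available. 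What remains -- the pendant subtrees of $\rT(n)$ hanging off $\rT_{k(n)}(n)$ -- is controlled in Proposition~\ref{prop:tight} by an entirely different argument: a height bound coming from an exponential moment estimate tied to $\sum \Delta C_i U_i(n)/C_i$ (Lemmas~\ref{lem:expbd} and~\ref{lem:sum}), and a maximal-size estimate for pendant masses via a second, modified P\'olya urn with immigration (Lemmas~\ref{lem:maxsize}, \ref{lem:abw}, \ref{lem:prok}), together with an exchangeability/projection argument for the L\'evy--Prokhorov part. This tightness step occupies all of Section~\ref{sec:tight} and is not subsumed by ``the number of degree-2 subdivisions of a given branch also concentrates.'' Your plan, as written, has no mechanism for showing that the total height and mass sitting above the slowly-grown skeleton is uniformly asymptotically negligible, which is what actually lets the triangle inequality $\dghp(\widehat{\bT}(n),\boldsymbol{\cT})\le \dghp(\widehat{\bT}(n),\widehat{\bT}_{k(n)})+\dghp(\widehat{\bT}_{k(n)},\boldsymbol{\cT}_{k(n)})+\dghp(\boldsymbol{\cT}_{k(n)},\boldsymbol{\cT})$ close.

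A secondary point: you describe the assembly as taking $n\to\infty$ along multiples of $\ell$ and patching the rest by an $O(1)$-vertex observation, whereas in the paper the three propositions already hold along all $n$ so no such patching is needed; this is cosmetic. But the missing tightness mechanism is substantive -- it is where the second P\'olya urn of the abstract lives, and you correctly flagged the coupling as model-specific but then treated everything downstream as a ``relatively standard concentration-and-density argument,'' which understates what the proof requires.
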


The proof of \refT{thm:gh} follows from three main steps. First, we can embed the combinatorial trees into the Poisson line-breaking trees (Proposition~\ref{prop:dist}). The embedding follows from beta-gamma algebra and is similar in spirit to that described in \citep[Proposition~3.7]{GH} for Marchal's algorithm. Second, we can use the embedding to show that for 
$\rT_k(n)$ defined to be the subtree of $\rT(n)$ spanned by the root and the first $k$ leaves, $\rT_k(n)$ is close to $\cT_k$ even for growing $k$ (Proposition~\ref{prop:finite}). Essentially this requires careful analysis of distances and masses in the combinatorial tree, which in turn boils down to understanding a time inhomogeneous P\'olya urn model studied by \citet{PRR14,PRR}, where distributional convergence results complementary to this paper are derived. 
Note also that once the correspondence to the urn model is made (in Section~\ref{sec:polya}), the choice of the scaling constant $c$ agrees with that of \cite[Proposition~2.1]{PRR14}; in our work,~$c$ is chosen to cancel the leading term in \eq{exponent}. Finally, we show what is left over in $\rT(n)$ outside of $\rT_k(n)$ is sufficiently small (Proposition~\ref{prop:tight}). 
This tightness argument requires careful analysis of two P\'olya urn models and 
an understanding of exchangeable random ``decorated'' masses.

The layout of the remainder of the paper is as follows. We present the three key propositions and a detailed proof outline in the last subsection of this introduction. In Section~\ref{sec:gh}, we provide necessary background on GHP convergence, and then we prove the three propositions in Sections~\ref{sec:coupling},~\ref{sec:as}, and~\ref{sec:tight}. We conclude this subsection of the introduction with a few remarks contextualizing our result and discussing further work.

For related work, as previously discussed, there is much interest in limits of recursively constructed trees. However, typically the models considered have some nice consistency properties such as 
Markov branching (see \cite{H} for a recent review), perhaps with some consistent leaf-labeling, e.g., a regenerative structure as in  \cite{PRW}, or having fully exchangeable leaf-labels. By consideration of small cases, it is clear that the leaf-labeling in our models is not exchangeable and the combinatorial trees do not have the Markov branching property so we cannot directly apply the general theory developed for such models. Also, it is unusual for recursively built  combinatorial tree models of the kind studied here to allow for degree-2 vertices and this case is excluded from some studies. Having GHP convergence results for an example falling outside the general theory is  interesting in its own right, but may also lead to further natural classes of models and general theory. 

There are many avenues for future study. The most obvious open problem is to provide a description analogous to Items~(3) and~(4) above for the limit trees. Moreover, there are many other decompositions and properties of recursively defined trees and their limits that are important and appear in the CRT literature -- what are the analogs of these in our setting? Note that our combinatorial trees provide one path to understanding properties of the limit trees.

\subsection{Proof outline of \refT{thm:gh}}

First, we examine the topologies of the combinatorial trees and the real trees. The idea is to embellish the real trees with random vertices so that the resulting trees, equipped with the graph distance, have the same law as the combinatorial trees.

\subsubsection*{Embellished trees} Write $\cT(0) = \cT_1$. For each $k\in\N$ and $i\in\{1,\ldots,\ell-1\}$, let $\cT((k-1)\ell+i)$ be obtained from inserting a vertex at a random point uniformly chosen with respect to the normalized Lebesgue measure over $\cT((k-1)\ell+i-1)$. Let $\cT'_k$ be formed by inserting a vertex uniformly in $\cT((k-1)\ell+\ell-1)$ and define $\cT(k\ell)$ by attaching a branch of length $C_{k+1}-C_k$ to this last inserted vertex. We call $\cT(1),\cT(2),\ldots$ the {\em embellished trees}, rooted at the same point as $\cT_1$. This construction is analogous to that of the combinatorial trees.

All vertices inserted in the above manner are called the {\em embellished vertices}. For all $k\in\N$ and $i\in\{1,\ldots,\ell-1\}$, if we forget about the embellished vertices, then $\cT((k-1)\ell+i)$ with the intrinsic length metric has the same law as the real tree $\cT_k$. A {\em leaf} of the embellished tree $\cT((k-1)\ell+i)$ is the corresponding leaf of $\cT_k$. A {\em vertex} of the embellished tree $\cT(n)$ is either an embellished vertex, a leaf, or the root. Denote by $v(\cT(n))$ the set of vertices of $\cT(n)$. We view $\cT(n)$ as the union of the (non-graph-theoretic) branches and the vertices, i.e., a hybrid of the real tree and the combinatorial tree.

For all integers $n,k$ with $n\ge (k-1)\ell$, let $\cT_k(n)$ be the subtree of the embellished tree $\cT(n)$ spanned by the root and the first $k$ leaves (in the order of appearance). Analogously, write $\rT_k(n)$ for the subtree of $\rT(n)$ spanned by the root and the first $k$ leaves.

The embellished trees give a way to couple $\rT(n)$ and $\cT(n)$ as follows. Recall that we often write $\rT_k(n) = \big( v(\rT_k(n)),\dgr\big)$ and $\cT_k  = \big(\cT_k,\dleb\big) $.

\begin{prop}\label{prop:dist}
There is a probability space where we can construct copies of $(\rT(n):n\in\N)$, $(\cT(n):n\in\N)$, and $(\cT_k:k\in\N)$ such that
\[
\big(v(\cT_k(n)),\dgr\big)= \rT_k(n)
~\mbox{ and }~
\big(\cT_k(n),\dleb\big) = \cT_k,
\]
for all integers $k,n$ with $n\ge (k-1)\ell$, equalities considered up to isometry-equivalence.
\end{prop}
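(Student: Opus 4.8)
The plan is to put all three objects on one probability space by driving the construction with the randomness of the \emph{real} trees and reading the combinatorial trees off as a deterministic functional. Let $(C_k)_{k\geq1}$ be the atoms of a Poisson process on $(0,\infty)$ with intensity $(\ell+1)t^\ell\,dt$, so that $C_k^{\ell+1}$ is a sum of $k$ independent rate-one exponentials, and let $(U_j)_{j\geq1}$ be an independent i.i.d.\ $\mathrm{U}[0,1]$ sequence used to locate the successive uniformly chosen points. Build the embellished trees $\cT(0),\cT(1),\dots$ exactly as in the ``Embellished trees'' paragraph, with the branch lengths supplied by the $C_k$'s. For $(k-1)\ell\le n\le k\ell-1$ let $\cT_k$ be $\cT(n)$ with its embellished vertices forgotten; this is unambiguous and, since a uniform point of a real tree is one chosen with respect to length measure, it coincides with the line-breaking tree $\cT_k$ driven by the same $(C_k)$. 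Finally set $\rT(n):=\big(v(\cT(n)),\dgr\big)$ and $\rT_k(n):=\big(v(\cT_k(n)),\dgr\big)$, where $\dgr$ counts the branches of $\cT(n)$ between consecutive vertices.

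With this construction the two displayed equalities become nearly immediate \emph{once} we know that $(\rT(n))_{n\geq0}$ evolves according to the combinatorial rule in the statement. Indeed, since every embellished mark and every pendant branch carrying a new leaf is attached at a length-uniform point, for all $n\ge(k-1)\ell$ the tree $\cT(n)$ has exactly the $k$ leaves $L_1,\dots,L_k$ lying on $\cT_k$; later leaves sit on branches pendant to $\cT_k$, and later marks are interior points of $\cT_k$, so the subtree of $\cT(n)$ spanned by the root and the first $k$ leaves is $\cT_k$ itself as a point set. This yields $(\cT_k(n),\dleb)=\cT_k$, and it simultaneously identifies the vertex set of $\cT_k(n)$ with that of $\rT_k(n)$, giving $\big(v(\cT_k(n)),\dgr\big)=\rT_k(n)$.

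It therefore remains to prove that $\rT(n)=\big(v(\cT(n)),\dgr\big)$ follows the prescribed combinatorial evolution, which I would do by induction on $n$. The inductive hypothesis should carry, besides the law of the labeled shape, an explicit description of the conditional law, given the combinatorial structure built so far, of the edge-length vector $(\lambda_e)$ of $\cT(n)$: its total mass is $C_{\lfloor n/\ell\rfloor+1}$, and it lies in a class under which every edge has the same expected relative length $1/m$, where $m$ is the current number of edges. Passing from step $n$ to step $n+1$ splits into the two moves defining $\cT(n+1)$: subdividing an edge at a length-uniform point — which combinatorially chooses an edge with probability proportional to its length and cuts it at an independent uniform position — and, when $\ell\mid(n+1)$, attaching a pendant branch of length $C_{k+1}-C_k$ at the new vertex. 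Under the inductive hypothesis the first move selects a uniformly random edge after conditioning on the combinatorial past, exactly matching the rule; conditioning further on which edge was chosen (now recorded in the enlarged shape), one must verify via beta--gamma algebra that the updated relative edge lengths again lie in the prescribed class, and for the attachment move the identity $(C_k/C_{k+1})^{\ell+1}\sim\mathrm{Beta}(k,1)$ together with the accompanying simplex splitting identities does the analogous job. This parallels the embedding of Marchal's algorithm in \citep[Proposition~3.7]{GH}.

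I expect the beta--gamma bookkeeping in the induction step to be the principal obstacle. Within one block of $\ell$ steps the embellished marks all subdivide the \emph{same} real tree $\cT_k$ at $\ell$ independent uniform points before the next leaf is grafted, so one has to propagate the conditional law of the relative edge lengths through $\ell$ consecutive size-biased splits followed by a single Gamma-power pendant attachment, and confirm that it remains in the class for which ``choose an edge proportionally to its length'' and ``choose an edge uniformly at random'' agree in law. Identifying the exact form of this conditional law — hence the right statement of the inductive hypothesis — is the crux; once it is in hand the verification is a routine computation with Beta and Gamma variables, and the factor $\ell+1$ in the Poisson intensity is precisely what makes $C_k^{\ell+1}$ a clean Gamma variable so that these identities close.
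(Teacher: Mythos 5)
Your proposal takes essentially the same route as the paper: put all three objects on one probability space by building the embellished trees and reading the combinatorial trees off as a functional, then reduce the statement to showing by induction that inserting a length-uniform point in $\cT(n-1)$ matches the uniform-edge combinatorial rule. The reduction and the appeal to beta--gamma algebra and the analogy with \citep[Proposition~3.7]{GH} are all correct and are what the paper does.

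The gap is precisely the one you flag as "the crux" and then defer. Your tentative description of the needed class of conditional laws --- "every edge has the same expected relative length $1/m$", i.e.\ exchangeability --- is not strong enough. Exchangeability makes a \emph{single} length-size-biased draw look uniform, but it does not close the induction: after conditioning on which edge was chosen and on where the split landed, the post-split relative edge-length vector must again lie in the class, and after grafting the $(k+1)$:th branch the vector must still lie in the class. The class that is closed under both operations is specifically $\mathrm{Dir}(1,\ldots,1)$, the uniform distribution on the simplex. This is the content of \refL{lem:frag}, proved inductively by combining \refL{lem:dircond} (splitting a $\mathrm{Dir}(1,\ldots,1)$ coordinate at an independent uniform preserves $\mathrm{Dir}(1,\ldots,1)$, even after conditioning on which coordinate was chosen) for the vertex-insertion step, and \refFt{fact:dirbeta} together with the representation $C_k = B_k C_{k+1}$ with $B_k\sim\mathrm{Beta}((\ell+1)k,1)$ from \refFt{fact:repr} for the branch-attachment step. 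The "routine computation with Beta and Gamma variables" you anticipate is exactly \refL{lem:frag}; without stating and proving it, the proposal is a correct outline of the paper's argument rather than a proof.
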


The proof of \refP{prop:dist}, given in \refS{sec:coupling}, relies on that when vertices are inserted into the embellished tree, branches are fragmented into Dirichlet-distributed lengths. 

\refP{prop:dist} gives us a direct coupling to compare the rescaled graph-theoretic path-lengths of $\rT_k(n)$ and the corresponding intrinsic path-lengths of $\cT_k$, which  leads to our next result showing that the combinatorial trees spanned by a subset of leaves and the analogous subtree of the limit tree are close.

Before stating the result, we need some facts and notation.
Firstly, as discussed in greater detail just below and in Section~\ref{sec:gh},
all the metric spaces appearing in this paper are compact, and so in fact we can define a distance on such metric spaces (modulo isometry-equivalence), denoted~$\dghp$, which induces the GHP topology.
Next, for all $k\in\N$, let $\mu_k$ be the normalized Lebesgue length measure on $\cT_k$. For all integers $k,n$ with $n\ge(k-1)\ell$, write $\nu_{k,n}$ for the uniform probability measure over $v(\rT_k(n))$. For two sequences $g(n), f(n)$, write $g(n) = \Omega(f(n))$ if there exists $C>0$ such that $g(n)\ge C f(n)$ for all $n$, and $g(n) = o(f(n))$ if $g(n)/f(n)\to0$, as $n\to\infty$. 
\begin{prop}\label{prop:finite}
Suppose $k:\N\to\N$ satisfies $\Omega\left((\log n)^{10}\right)$ and $k(n)=o\left(n^{1/10}\right)$. In the probability space where the equalities of \refP{prop:dist} hold, almost surely as $n\to\infty$,
\[
\dghp\left( \bklr{v\bklr{\rT_{k(n)}(n)},\frac{c}{n^\alpha}\cdot \dgr, \nu_{k(n),n}},\big(\cT_{k(n)}, \dleb,\mu_{k(n)}\big)\right) \to 0.
\] 
\end{prop}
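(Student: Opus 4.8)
The plan is to use the coupling of \refP{prop:dist}, under which $(v(\rT_k(n)),\dgr)$ and $(\cT_k,\dleb)$ live on the same tree shape (namely $\cT_k(n)$), so that the GHP distance between the two measured metric spaces can be controlled by (i) the distortion between the rescaled graph metric $\frac{c}{n^\alpha}\dgr$ and the intrinsic length metric $\dleb$ on the common vertex/point set, and (ii) the Prokhorov distance between the push-forwards of $\nu_{k(n),n}$ and $\mu_{k(n)}$ onto this common space. For the metric distortion: since $\cT_k(n)$ is a tree with at most $\sim 2k$ leaves and hence at most $\sim 2k$ internal branches (edges of the ``topological'' tree $\cT_k$), it suffices to show that every such branch has graph-length (number of $\rT(n)$-edges it is subdivided into, times $\tfrac{c}{n^\alpha}$) close to its intrinsic length $|\cdot|$ in $\cT_k$, uniformly over the $O(k)$ branches. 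Each branch length in $\cT_k$ is a sum of the relevant $C_j - C_{j-1}$-type increments distributed via the line-breaking construction; the corresponding graph-length is, through \refP{prop:dist} and the Dirichlet fragmentation it identifies, governed by how mass/length accumulated on that branch evolves under edge-insertions — which is exactly the time-inhomogeneous P\'olya urn studied in \cite{PRR14,PRR}. So the first key step is to invoke (or re-derive from) the urn concentration estimates to get that, for a single branch, $\frac{c}{n^\alpha}\cdot(\text{graph length}) = |\text{branch}|\,(1+o(1))$ with a quantitative error, and then take a union bound over $O(k(n)) = O(n^{1/10})$ branches, using the hypothesis $k(n)=o(n^{1/10})$ to absorb the union-bound loss and $k(n)=\Omega((\log n)^{10})$ to ensure the error terms (which are polynomial in $\log n$ over powers of $n$) decay; the choice of $c$ cancels the leading-order term in the exponent, cf.\ \eqref{exponent}.

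For the measure comparison: $\mu_{k(n)}$ is normalized Lebesgue length on $\cT_{k(n)}$, while $\nu_{k(n),n}$ is uniform on the $|v(\rT_{k(n)}(n))|$ vertices. Transported onto the common tree $\cT_{k(n)}(n)$, the vertices of $\rT_{k(n)}(n)$ are precisely the points where edge-insertions have occurred (plus leaves and root); so one must show that (a) the total number of such vertices, suitably rescaled, matches the total length $\sum |\text{branches}|$, and (b) on each branch the inserted vertices are asymptotically equidistributed with respect to length. Both (a) and (b) again reduce to the P\'olya urn / Dirichlet fragmentation picture: the vertices subdividing a given branch split its length into Dirichlet-distributed pieces, and a symmetric-Dirichlet with many parameters concentrates near the uniform partition, so the empirical measure of subdivision points approximates the uniform (Lebesgue) measure on the branch, with error controlled by the number of subdivision points (which is $\Omega(n^\alpha/k)$, large under our assumptions). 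Combining the per-branch length-ratio control from the metric step with the per-branch equidistribution gives a coupling of $\nu_{k(n),n}$ and $\mu_{k(n)}$ with small transport cost, hence small Prokhorov distance; feeding both pieces into the standard bound for $\dghp$ in terms of a common embedding (Section~\ref{sec:gh}) yields the claim, and Borel–Cantelli converts the summable-in-$n$ error probabilities into almost-sure convergence.

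The main obstacle I expect is the uniform-in-$k(n)$ control: the per-branch estimates from the P\'olya urn are only useful if they hold simultaneously for all $O(k(n))$ branches with probability $1-o(n^{-1})$ (for Borel–Cantelli), and the branches are created at different ``times'' $j\ell$ and thus correspond to urns with different numbers of draws — the later-created branches are short and subdivided into relatively few $\rT(n)$-edges, so their relative fluctuations are the largest. Quantifying these worst-case fluctuations and checking that the exponents in the hypotheses $k(n)=\Omega((\log n)^{10})$, $k(n)=o(n^{1/10})$ are exactly what makes the union bound close is the technical heart of the argument. A secondary subtlety is that $\cT_k(n)$ is a \emph{random} tree shape and one is conditioning on it when applying the urn estimates; one must be careful that the concentration bounds are uniform over the (finitely many, but growing in number) possible shapes, or argue conditionally on the discrete tree structure and then average. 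I would organize the proof by first stating a single ``good event'' $G_n$ on which all branch-length ratios and all branch-equidistributions are within the target tolerance, bound $\mathbb{P}(G_n^c)$ by the union bound over branches against the urn tail estimates, and then on $G_n$ do the deterministic GHP computation.
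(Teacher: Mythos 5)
Your high-level strategy agrees with the paper's: use the coupling of Prop.~\ref{prop:dist} to place both measured spaces on the same underlying tree, control the metric distortion and Prokhorov distance separately, take a union bound over a number of pieces growing with $k(n)$, and close with Borel--Cantelli; the window $\Omega((\log n)^{10})\le k(n)=o(n^{1/10})$ is indeed what makes the union bound summable. The technical route differs, though. You decompose $\cT_{k(n)}$ into its $O(k(n))$ topological branches and propose to control per-branch lengths via the P\'olya urn of \cite{PRR14,PRR} and within-branch equidistribution via symmetric-Dirichlet concentration. The paper instead proves a single concentration lemma (Lemma~\ref{lem:convas}) bounding the deviation of $\widehat{M}(S,n)$ from $|S|$ uniformly over \emph{arbitrary} Borel subsets $S\subset\cT_k$; its proof rests not on the urn model but on the simpler Fact~\ref{fact:bin} that, conditionally on $\cT_k$ and the $C_j$'s, the increments $M(S,j\ell)-M(S,(j-1)\ell)$ are independent Binomial$(\ell,|S|/C_j)$ variables, yielding a Chernoff bound directly. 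It then covers $\cT_{k(n)}$ by $m_n=O(k(n)^{2/(\ell+1)})$ balls of diameter $k(n)^{-1/(\ell+1)}$ rather than by branches, applies the lemma to paths between ball-representatives to bound the distortion, and to the balls themselves (via Corollary~\ref{cor:embellish0}) to bound the discrepancy in the correspondence/coupling-measure formulation of $\dghp$ from Section~\ref{sec:gh}. Two amendments your sketch needs: (i) for the metric distortion you must also invoke within-branch equidistribution, not just per-branch total-length matching, because a path between two arbitrary vertices traverses \emph{partial} branches --- you flag equidistribution only in the measure step; (ii) the P\'olya urn of Section~\ref{sec:polya} is in fact the tool for the tightness argument (Prop.~\ref{prop:tight}), not for Prop.~\ref{prop:finite}, where the Binomial representation suffices and is cleaner. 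With those amendments your branch-by-branch version could be made to work; the paper's ball-cover plus universal-$S$ lemma avoids the branch bookkeeping entirely.
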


The proof of \refP{prop:finite} is given in \refS{sec:as} and relies on  a concentration result (Lemma~\ref{lem:convas} and Corollary~\ref{cor:embellish0}), which says that the number of vertices along a path in $\rT_{k(n)}(n)$ has order $c^{-1}n^\alpha$ times the Lebesgue length of the path, and that the vertices are regularly distributed.

Next, to ensure that $\rT_k(n)$ is close to $\rT(n)$, we need a tightness property of the sequence $(\rT_k(n):k\in\N, n\ge k\ell)$, i.e., the Hausdorff distance between $\rT_{k(n)}(n)$ and $\rT(n)$ is diminishing, and the L\'evy-Prokhorov distance between their uniform probability measures also vanishes in the limit. Recall that $\nu_n$ is the uniform probability measure over $v(\rT(n))$.

\begin{prop}\label{prop:tight}
Suppose $k:\N\to\N$ satisfies $k(n)=\Omega\left(n^{1/100}\right)$ and $k(n)=o\left(n^{1/3}\right)$ and assume now $\ell\geq 2$. Then, almost surely as $n\to\infty$,
\[
\dghp\left( \big( v(\rT_{k(n)}(n)),\frac{c}{n^\alpha}\cdot\dgr, \nu_{k(n),n}\big), \big( v(\rT(n)),\frac{c}{n^\alpha}\cdot\dgr, \nu_{n}\big) \right)
\to 0.
\]
\end{prop}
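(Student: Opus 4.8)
The plan is as follows. Since $v(\rT_{k(n)}(n))$ sits isometrically inside $v(\rT(n))$ with the restricted metric (in a tree the path joining two vertices of a subtree stays in the subtree), we realise both measured spaces in the common space $\bklr{v(\rT(n)),\tfrac{c}{n^\alpha}\cdot\dgr}$ and bound the left-hand side by the maximum of the Hausdorff distance between the two vertex sets and the L\'evy--Prokhorov distance between $\nu_n$ and $\nu_{k(n),n}$ computed there. For each $n$ decompose $\rT(n)$ into $\rT_{k(n)}(n)$ and the connected components $F_1,F_2,\dots$ of $\rT(n)\setminus\rT_{k(n)}(n)$. Each such ``dangling subtree'' $F_i$ is attached to $v(\rT_{k(n)}(n))$ at a single vertex $\rho_i$: indeed $\rho_i$ is the vertex $v_t$ that subdivided an edge $e$ of $\rT_{k(n)}(t-1)$ at the step $t$ when $F_i$ was created, and $e$, being an edge of that spanning subtree, lies on a root-to-leaf path of one of the first $k(n)$ leaves, so $v_t$ lies on that path in $\rT_{k(n)}(m)$ for every $m\ge t$. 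Let $\sigma\colon v(\rT(n))\to v(\rT_{k(n)}(n))$ be the nearest-vertex retraction; then $\sigma(v)=\rho_i$ for $v\in F_i$, and $\sigma$ moves such a $v$ by graph distance at most $h_i:=\max_{v\in F_i}\dgr(v,\rho_i)$.

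\emph{Hausdorff term.} We must show $\tfrac{c}{n^\alpha}\max_i h_i\to0$ almost surely. Under the coupling of \refP{prop:dist}, which realises $\rT(n)$ as the vertex set of the embellished tree $\cT(n)$, each $F_i$ is a piece of $\cT(n)$ dangling off $\cT_{k(n)}(n)$ and $(\cT_{k(n)}(n),\dleb)=\cT_{k(n)}$; hence its intrinsic-length height is at most $\rd_{\mathrm H}(\cT_{k(n)},\cT)$, which tends to $0$ almost surely because $\cT$ is compact (\citet{CH}; see \refT{thm:ch}). It remains to transfer this bound to rescaled graph distance: the concentration estimate behind \refP{prop:finite} --- Lemma~\ref{lem:convas} and Corollary~\ref{cor:embellish0}, to the effect that the number of vertices on any path is $c^{-1}n^\alpha$ times its intrinsic length up to lower order and uniformly --- applied simultaneously to all the (polynomially many) pieces $F_i$ shows that $\tfrac{c}{n^\alpha}h_i$ equals the intrinsic height of the corresponding continuum piece up to $o(1)$, uniformly in $i$; pieces too short for the concentration to bite already have rescaled height $\bigo(n^{-\alpha})$. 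The underlying bookkeeping is a P\'olya-urn description of how new vertices are apportioned between $\rT_{k(n)}(n)$ and the $F_i$'s.

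\emph{Prokhorov term.} Put $\tilde\nu_n:=\sigma_\ast\nu_n$, a probability measure on $v(\rT_{k(n)}(n))$. By the previous step $\dprok(\nu_n,\tilde\nu_n)\le\tfrac{c}{n^\alpha}\max_i h_i=o(1)$ almost surely, so it suffices to show $\dprok(\tilde\nu_n,\nu_{k(n),n})\to0$. We prove that, in $\bklr{v(\rT_{k(n)}(n)),\tfrac{c}{n^\alpha}\cdot\dgr}$ --- identified with $\cT_{k(n)}$ through the near-isometry supplied by the same concentration estimate --- both $\nu_{k(n),n}$ and $\tilde\nu_n$ converge to the normalized length measure $\mu_{k(n)}$ on $\cT_{k(n)}$. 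For $\nu_{k(n),n}$ this is the ``vertices are regularly distributed'' half of the concentration estimate. For $\tilde\nu_n$, write
\[
\tilde\nu_n \;=\; \frac{1}{|v(\rT(n))|}\sum_{w\in v(\rT_{k(n)}(n))}\delta_w \;+\; \frac{1}{|v(\rT(n))|}\sum_{i}|v(F_i)|\,\delta_{\rho_i}.
\]
The first term has total mass $|v(\rT_{k(n)}(n))|/|v(\rT(n))|$, which is $o(1)$ because $\rT_{k(n)}(n)$ carries only $\Theta\bklr{k(n)^{1-\alpha}n^\alpha}=o(n)$ vertices by the concentration estimate, so everything rests on the weighted point masses. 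The normalized sizes $w_i:=|v(F_i)|/|v(\rT(n))|$ form a random mass partition produced by a time-inhomogeneous P\'olya urn --- really by two nested urns, one tracking the split between edges of the spanning subtrees $\rT_{k(n)}(t)$ and dangling edges together with the creation times of the pieces, the other tracking the growth of the individual pieces, both in the family analysed by \citet{PRR14,PRR} --- while, conditionally on this partition and on $\cT_{k(n)}$, the bases $\rho_i$ behave like an independent uniform-by-length sample from $\cT_{k(n)}$, since the subdivided edge at each creation step is a uniform --- hence regularly distributed --- edge of the current spanning subtree. This ``exchangeable decorated masses'' structure, combined with the bound $\max_i w_i\to0$ discussed below, makes available the second-moment estimate $\sum_i w_i^2\le(\max_i w_i)\sum_i w_i\to0$, whence $\sum_{i:\,\rho_i\in B}w_i\to\mu_{k(n)}(B)$ for sub-branches $B$ of $\cT_{k(n)}$, i.e.\ $\tilde\nu_n\to\mu_{k(n)}$. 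Combining, $\dprok(\nu_n,\nu_{k(n),n})\le\dprok(\nu_n,\tilde\nu_n)+\dprok(\tilde\nu_n,\mu_{k(n)})+\dprok(\mu_{k(n)},\nu_{k(n),n})=o(1)$ almost surely, which with the Hausdorff bound makes the GHP distance $o(1)$ almost surely.

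\emph{The main obstacle.} The crux is the almost-sure bound $\max_i w_i\to0$. The dangling subtrees grow by a preferential-attachment mechanism, so their sizes are genuinely spread out (past rank $k(n)$ they follow a Poisson--Dirichlet-type profile) and no concentration of individual $w_i$ is available; moreover, were $\max_i w_i$ bounded away from $0$, the retraction $\sigma$ would create a macroscopic atom of $\tilde\nu_n$ and $\dprok(\tilde\nu_n,\nu_{k(n),n})$ would not vanish. What rescues the argument is that the first piece is not born until roughly step $k(n)\ell$, by which time the spanning subtree of the first $k(n)$ leaves already carries order $k(n)$ edges; the nested-urn analysis then shows that even this largest piece only reaches a $\Theta(1/k(n))$ fraction of $v(\rT(n))$, so $\max_i w_i=\bigo(1/k(n))\to0$. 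Making this, and the attendant averaging, hold almost surely rather than merely in probability is where the hypotheses are used: $k(n)=o(n^{1/3})$ leaves slack in every concentration estimate, while the polynomial lower bound $k(n)=\Omega(n^{1/100})$ renders the exceptional probabilities summable so that Borel--Cantelli applies. The assumption $\ell\ge2$ is in force throughout, as in the statement (the excluded case $\ell=1$ is R\'emy's algorithm, for which GHP convergence to the Brownian CRT is classical). Finally, note that the proof of \refT{thm:gh} applies \refP{prop:finite} and \refP{prop:tight} for a common $k(n)$ lying in the overlap of their hypotheses, e.g.\ $n^{1/100}\le k(n)\le n^{1/20}$.
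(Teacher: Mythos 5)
Your overall architecture is right: decompose $\rT(n)$ into the spanned subtree $\rT_{k(n)}(n)$ plus pendant pieces $F_i$, bound the Gromov--Hausdorff part by the maximal rescaled height of a pendant piece, and handle the measures via the projection $\tilde\nu_n = \overline{\nu}_{k(n),n}$ followed by an exchangeability/second-moment argument. The second half is conceptually the same as the paper's (the paper's Lemma~\ref{lem:abw} makes your ``exchangeable decorated masses'' heuristic precise via a uniform random permutation of attachment locations, and Lemma~\ref{lem:maxsize} supplies the $\max_i w_i\to 0$ bound; your ``independent uniform sample'' picture for the bases $\rho_i$ is not literally correct, but the random-permutation exchangeability that \emph{is} correct does the same job).

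The genuine gap is in your Hausdorff step. You propose to bound $\tfrac{c}{n^\alpha}h_i$ by the intrinsic height of the continuum piece, invoking Lemma~\ref{lem:convas} and Corollary~\ref{cor:embellish0} ``applied simultaneously to all the (polynomially many) pieces $F_i$.'' But those concentration results are stated, and proven, only for sets $S\in\cB_{k(n)}^+$, i.e.\ Borel subsets of $\cT_{k(n)}$, and $M(S,n)$ counts vertices of the \emph{spanned} embellished subtree $\cT_{k(n)}(n)$ on $S$. A path inside a pendant piece lies entirely in $\cT\setminus\cT_{k(n)}$, and its vertices lie in $v(\cT(n))\setminus v(\cT_{k(n)}(n))$: neither the set nor the count falls within the scope of those lemmas. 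Moreover, their proof mechanism breaks: conditionally on $\cT_{k(n)}$, the vertices landing in a fixed $S\subset\cT_{k(n)}$ form a binomial process over the insertion steps (Fact~\ref{fact:bin}), which is what enables the Chernoff bound; by contrast, vertices accumulating in a pendant piece depend on the piece's own evolving internal structure (it grows, branches, and attracts more vertices as it grows), so the conditional law is not a product of binomials and no analogous uniform estimate is available ``for free.'' You also cannot simply re-run Lemma~\ref{lem:convas} with a larger $k'$ covering the pieces, since that would require $k'\sim n/\ell$ whereas the lemma needs $k'=o(n^{1/2})$. This is exactly why the paper develops a separate chain of arguments here: a second coupling (the $(\cT^\circ_k,X^\circ_k)$ construction of Lemma~\ref{lem:unif}) feeding into the representation of Lemma~\ref{lem:sumindi}, from which Lemma~\ref{lem:expbd} extracts exponential moment bounds on $\dgr(u,\rT_k(n))$ in terms of $\sum_i \Delta C_i U_i(n)/C_i$; Lemma~\ref{lem:sum} then controls this sum and $\max_i U_i(n)$ via moment bounds for the inhomogeneous P\'olya urn. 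Replacing this machinery by an appeal to the spanned-subtree concentration does not work, and repairing your route would require proving a new concentration lemma for the pendant pieces that the paper does not have and that is not a formal consequence of Lemma~\ref{lem:convas}.
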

Note the restriction in Proposition~\ref{prop:tight} to $\ell\geq 2$, which stems from Lemma~\ref{lem:prok} and in particular the proof of Lemma~\ref{lem:maxsize}.  The restriction is due to balancing asymptotic terms and probably some version of the proposition and these lemmas hold for $\ell=1$, but convergence in this case is well-covered in the literature and so it is enough for us to consider $\ell\geq 2$. All other lemmas and propositions in the paper hold for $\ell=1$. 

To establish \refP{prop:tight}, we deduce a height-bound for the subtrees of $\rT(n)$ pendant to $\rT_{k(n)}(n)$, and we also show that subtrees pendant to $\rT_{k(n)}(n)$ are ``uniformly asymptotically negligible'' (a similar property is used in \citet{ABW,Wen}).
That is, Lemmas~\ref{lem:expbd} and~\ref{lem:sum} imply that with $1-o(1)$ probability, the maximal height of the subtrees of $\rT(n)$ pendant to $\rT_{k(n)}(n)$ has order $o(n^\alpha)$ (yielding GH convergence) 
and Lemma~\ref{lem:maxsize} implies that the maximal size of the subtrees has order $o\left(n\cdot k(n)^{-8/(3(\ell+1))}\right)$. By projecting the masses of pendant subtrees onto $\rT_{k(n)}(n)$, we can deduce a bound on the relevant L\'evy-Prokhorov distance.
Details are given in \refS{sec:tight}. 

As shown in the next several results of \cite{CH}, $\cT$ is almost surely compact, which allows for the convergence to hold in the GHP topology instead of, say, the local GHP topology.

\begin{theorem}{\em (\citep[Theorem 1]{CH}).}\label{thm:ch}
Suppose that there exists $\alpha'\in(0,1]$ such that for $a\coloneqq (a_k:k\in\N)\subset \R_+$ we have $a_k\le k^{-\alpha'+o(1)}$ and $\sum_{i=1}^k a_i = k^{1-\alpha'+o(1)}$ as $k\to\infty$. Then $\cT^a$ is almost surely a compact real tree.
\end{theorem}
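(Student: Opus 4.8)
The plan is to reduce compactness to a single, purely deterministic estimate. Since $\cT^a$ is a completion it is complete, so it suffices to show it is totally bounded (finite diameter will follow along the way). For $k\in\N$ set
\[
E_k\;:=\;\sup_{x\in\cT^a}\dleb\klr{x,\cT^a_k},
\]
which is the supremum, over the subtrees $S$ of $\cT^a$ pendant to $\cT^a_k$, of the height of $S$ measured from the point where $S$ meets $\cT^a_k$. Since $\cT^a_k\subseteq\cT^a_{k+1}$, the sequence $(E_k)$ is nonincreasing. If $E_k\to 0$ almost surely, then $\cT^a$ is totally bounded: given $\eps>0$, choose $k$ with $E_k<\eps/2$, take a finite $(\eps/2)$-net of the compact real tree $\cT^a_k$, and observe it is an $\eps$-net of $\cT^a$; in particular $\diam(\cT^a)<\infty$. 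So the whole theorem reduces to proving $E_k\to 0$ almost surely, and since $(E_k)$ decreases to a limit $E_\infty\ge 0$ with $\p{E_\infty>\eps}\le\p{E_k>\eps}$ for every $k$, it is enough to show that $\p{E_k>\eps}\to 0$ as $k\to\infty$, for each fixed $\eps>0$.

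To bound $\p{E_k>\eps}$, I would dominate $\{E_k>\eps\}$ by the occurrence of a long chain of late segments. If $E_k>\eps$, then some pendant subtree $S$ of $\cT^a_k$ carries a geodesic of length $>\eps$ issuing from its attachment point; reading this geodesic outwards, it traverses portions of segments whose indices form an increasing run $k<j_1<j_2<\dots<j_m$ such that the $j_1$-st segment is attached to $\cT^a_k$, the $j_{i+1}$-st segment is attached at a point of the $j_i$-th segment for $1\le i\le m-1$, and $a_{j_1}+\dots+a_{j_m}>\eps$ (this sum bounds the length of the geodesic). Now each segment is attached at an independent uniform point of the current tree, and after $n$ segments the total length $L_n:=\sum_{i\le n}a_i$ is \emph{deterministic}; conditioning successively on the attachment points therefore gives that a prescribed chain $(j_1,\dots,j_m)$ occurs with probability exactly $\tfrac{L_k}{L_{j_1-1}}\prod_{i=1}^{m-1}\tfrac{a_{j_i}}{L_{j_{i+1}-1}}$. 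A union bound over all such chains yields the deterministic estimate
\[
\p{E_k>\eps}\ \le\ \sum_{m\ge 1}\ \ \sum_{\substack{k<j_1<\dots<j_m\\ a_{j_1}+\dots+a_{j_m}>\eps}}\ \frac{L_k}{L_{j_1-1}}\prod_{i=1}^{m-1}\frac{a_{j_i}}{L_{j_{i+1}-1}}.
\]

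The remaining task --- and this is where both hypotheses on $a$ are used and where the real work lies --- is to show this double sum tends to $0$ as $k\to\infty$ for each fixed $\eps$, via the two-sided bound $L_n=n^{1-\alpha'+o(1)}$ (the upper bound coming from $a_n\le n^{-\alpha'+o(1)}$, the lower bound from $\sum_{i\le n}a_i=n^{1-\alpha'+o(1)}$) together with $a_n\le n^{-\alpha'+o(1)}$. Heuristically: since $a_n\to 0$, the constraint $a_{j_1}+\dots+a_{j_m}>\eps$ with all $j_i>k$ forces $m\gtrsim\eps\,k^{\alpha'-o(1)}$ once $k$ is large, while each link of a chain contributes a factor $a_{j_i}/L_{j_{i+1}-1}\le k^{-1+o(1)}$, so a chain of length $m$ has probability at most $k^{-(m-1)(1-o(1))}$; summed against the number of admissible index choices and then over $m$, this should send the sum to $0$. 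The hard part is making this rigorous while controlling \emph{all} chain lengths simultaneously: the length constraint $a_{j_1}+\dots+a_{j_m}>\eps$ is indispensable, since without it the double sum diverges (as $\sum_n 1/L_n=\infty$), so one cannot estimate term by term but must exploit the precise balance between the polynomial decay of $(a_n)$ and the polynomial growth of $(L_n)$. Establishing that this balance makes the bound both finite and $o(1)$ in $k$ is the technical heart of the argument.
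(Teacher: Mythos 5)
This theorem is not proved in the paper; it is imported from Curien and Haas \cite{CH} (their Theorem~1), so there is no in-paper proof to compare against. Your reduction to showing $\p{E_k>\eps}\to0$ for each fixed $\eps$ is correct (completeness plus total boundedness, plus monotonicity of $(E_k)$), and the first-moment union bound over chains, with the exact chain probability, is correctly set up. Note in passing that the chain probability also factorises as $\frac{L_k}{L_{j_m-1}}\prod_{i=1}^{m-1}\frac{a_{j_i}}{L_{j_i-1}}$, which makes the sum over $k<j_1<\dots<j_{m-1}<N$ (with $j_m=N$ fixed) at most $\frac{L_k}{L_{N-1}}\cdot\frac{1}{(m-1)!}\bigl(\sum_{k<j<N}a_j/L_{j-1}\bigr)^{m-1}$.

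The genuine gap is that all the analytic content of the theorem sits in the estimate you leave at the heuristic level, and the heuristic as stated is not adequate to close it. Since $\sum_{k<j<N}a_j/L_{j-1}$ is of order $\log(N/k)$, the restriction $m\gtrsim\eps k^{\alpha'}$ (which you correctly derive) only defeats the unrestricted divergence of $\sum_N\sum_m$ when $\log(N/k)$ is small relative to $\eps k^{\alpha'}$; once the top chain index $N$ exceeds roughly $k\exp(\eps k^{\alpha'})$ the cutoff on $m$ alone is ineffective, and one must exploit the length constraint $\sum_i a_{j_i}>\eps$ in a finer, index-by-index fashion (for instance, via a geometric-block decomposition of the indices). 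That balancing is exactly the technical heart you flag, and the sketch does not establish it. The framework and probabilities are right, but the crucial summation estimate --- which is where the hypotheses $a_k\le k^{-\alpha'+o(1)}$ and $\sum_{i\le k}a_i=k^{1-\alpha'+o(1)}$ actually enter --- is missing.

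Curien and Haas take a different route, the one this paper extends to the discrete model. Judging from the paper's use of \cite[Section~1.2]{CH} (see the coupling in \refS{sec:pf} and \refL{lem:expbd}, described there as an easy generalization of the Curien--Haas argument), they couple the aggregation with a Markovian random point $X^\circ_m$ that stays uniform on $\cT^a_m$, so that $\dleb(X^\circ_m,\cT^a_k)=\sum_{i=k+1}^m\I{W_i\le a_i/L_i}V_i a_i$ is a sum of independent, conditionally bounded increments, and then obtain exponential-moment bounds for this depth. That route replaces your combinatorial sum over all chains with a concentration argument, at the cost of setting up the coupling.
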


\begin{fact}{\em (\cite{CH}).}\label{fact:ch}
If $a_k=C_k-C_{k-1}$ for $n\in\N$, then almost surely $a\coloneqq(a_k:k\in\N)$ satisfies the assumption in \refT{thm:ch} for $\alpha'\coloneqq \frac{\ell}{\ell+1}$.
\end{fact}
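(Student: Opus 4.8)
The plan is to verify the two asymptotic conditions of Theorem~\ref{thm:ch} for the increment sequence $a_k = C_k - C_{k-1}$ of the inhomogeneous Poisson process with rate $(\ell+1)t^\ell\,dt$. Writing $F(t) = t^{\ell+1}$ for the mean measure of $(0,t]$, the point $C_k$ is the time at which the Poisson process accumulates its $k$-th point, so $F(C_k) = C_k^{\ell+1}$ is (in distribution) a sum of $k$ i.i.d.\ rate-$1$ exponentials, hence $C_k^{\ell+1} = k(1+o(1))$ almost surely by the strong law of large numbers. This gives $C_k = k^{1/(\ell+1)+o(1)}$ almost surely, and therefore $\sum_{i=1}^k a_i = C_k = k^{1/(\ell+1)+o(1)} = k^{1-\alpha'+o(1)}$ with $\alpha' = \ell/(\ell+1)$, which is the second required condition.

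For the first condition, $a_k \le k^{-\alpha'+o(1)}$, I would control the increments $C_k - C_{k-1}$. One clean route: since $F(C_k) - F(C_{k-1})$ is an i.i.d.\ rate-$1$ exponential $E_k$, a Borel--Cantelli argument gives $E_k \le 2\log k$ eventually almost surely, so $C_k^{\ell+1} - C_{k-1}^{\ell+1} \le 2\log k$ eventually. Combining with $C_{k-1} = k^{1/(\ell+1)+o(1)}$ and the mean value theorem, $C_k - C_{k-1} \le (C_k^{\ell+1} - C_{k-1}^{\ell+1})/((\ell+1)C_{k-1}^\ell) \le (2\log k)/((\ell+1) k^{\ell/(\ell+1)+o(1)}) = k^{-\ell/(\ell+1)+o(1)} = k^{-\alpha'+o(1)}$ almost surely. (One must check $C_k/C_{k-1} \to 1$ to push the lower endpoint of the mean-value interval down to $C_{k-1}$, which is immediate from $C_k^{\ell+1}/C_{k-1}^{\ell+1} = k/(k-1) \cdot (1+o(1)) \to 1$.) Then both hypotheses of Theorem~\ref{thm:ch} hold with the stated $\alpha'$, and by Fact~\ref{fact:ch}'s own statement $\cT = \cT^a$ is almost surely a compact real tree; since $\cT$ is the completion (equivalently, as the trees are bounded, the closure) of the increasing sequence $\cT_k$, the conclusion follows.

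The only mild obstacle is bookkeeping the $o(1)$ error terms uniformly: the strong law gives $C_k^{\ell+1}/k \to 1$ a.s., but "$o(1)$ in the exponent" statements like $a_k \le k^{-\alpha'+o(1)}$ must be interpreted along the a.s.\ event and require that the polylogarithmic factors $(\log k)^{\pm 1}$ are absorbed into $k^{o(1)}$, which they are. A second small point: the increments $E_k = F(C_k) - F(C_{k-1})$ are i.i.d.\ rate-$1$ exponentials only if we use the standard fact that the mean-measure-transformed points $F(C_k)$ form a homogeneous rate-$1$ Poisson process on $(0,\infty)$; this is standard but worth invoking explicitly. With these in hand the proof is short; I would present it as: (i) transform to a homogeneous Poisson process, (ii) SLLN for $C_k$, (iii) Borel--Cantelli for the increments, (iv) mean value theorem to convert increment bounds from the $F$-scale to the $t$-scale, (v) invoke Theorem~\ref{thm:ch}.
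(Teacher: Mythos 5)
The paper does not prove Fact~\ref{fact:ch} --- it is cited from Curien and Haas \citep{CH} with no argument given --- so there is no in-paper proof to compare against. Your argument is correct and, moreover, uses exactly the representation the paper records as Fact~\ref{fact:repr} (namely $C_k \eqdist (E_1+\cdots+E_k)^{1/(\ell+1)}$ pathwise, since the time-change $t \mapsto t^{\ell+1}$ carries the inhomogeneous process to a rate-$1$ homogeneous one). The SLLN step gives $C_k = k^{1/(\ell+1)+o(1)}$ almost surely and hence $\sum_{i\le k} a_i = C_k = k^{1-\alpha'+o(1)}$, and the Borel--Cantelli bound $E_k \le 2\log k$ eventually together with the mean value theorem gives $a_k \le E_k/\bklr{(\ell+1)C_{k-1}^\ell} = k^{-\alpha'+o(1)}$, with the logarithm absorbed into $k^{o(1)}$ as you note. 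One small remark: the parenthetical about needing $C_k/C_{k-1}\to 1$ is superfluous --- the mean-value point $\xi$ already lies in $(C_{k-1},C_k)$, so $\xi^\ell \ge C_{k-1}^\ell$ is automatic and the one-sided bound you want follows with no further check. Also, the closing sentence slightly overreaches: the content of Fact~\ref{fact:ch} is precisely that the hypotheses of Theorem~\ref{thm:ch} hold almost surely; the ensuing compactness of $\cT$ is the conclusion of Theorem~\ref{thm:ch}, not part of what this Fact asserts.
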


\begin{theorem}\label{thm:meas}
{\em (\citep[Theorem 4]{CH}).} Almost surely, there exists a probability measure $\mu$ supported by the leaves of $\cT$ such that $\mu_k \to\mu$ weakly as $k\to\infty$.
\end{theorem}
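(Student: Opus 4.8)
The plan is to realise $\mu$ as the almost sure weak limit of the sequence $(\mu_k)_{k\ge1}$, working throughout conditionally on the Poisson process, so that $(C_k)_{k\ge0}$ and the increments $a_k:=C_k-C_{k-1}$ become deterministic and the only remaining randomness is the sequence of uniform attachment points; let $(\cF_j)$ be the filtration they generate, and for $k\ge2$ write $U_k\in\cT_{k-1}$ for the point at which the $k$th branch is grafted. Since $\cT$ is almost surely compact by \refT{thm:ch} and \refFt{fact:ch}, the set of Borel probability measures on $\cT$ is weakly compact, so $(\mu_k)$ automatically has weak subsequential limits; it remains to show that the limit is unique and that it is carried by the leaves of $\cT$.

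The first ingredient is a martingale. Fix $k_0\in\N$ and let $(P_m)_{m\ge1}$ enumerate the connected components of $\cT\setminus\cT_{k_0}$ in order of creation: $P_m$ is born at the $m$th index $j_m>k_0$ with $U_{j_m}\in\cT_{k_0}$ (there are almost surely infinitely many such indices), at which time $P_m\cap\cT_{j_m}$ is a single branch of length $a_{j_m}$ attached at $U_{j_m}$. Writing $\Lambda^{(m)}_j$ for the intrinsic length of $P_m\cap\cT_j$, the rule that $\cT_{j+1}$ is obtained from $\cT_j$ by grafting a length-$a_{j+1}$ branch at a $\mu_j$-uniform point gives $\E{\Lambda^{(m)}_{j+1}\mid\cF_j}=\Lambda^{(m)}_j\bigl(1+a_{j+1}/C_j\bigr)=\Lambda^{(m)}_j\,C_{j+1}/C_j$, so $\bigl(\mu_j(P_m)\bigr)_{j\ge j_m}=\bigl(\Lambda^{(m)}_j/C_j\bigr)_{j\ge j_m}$ is a $[0,1]$-valued martingale, converging almost surely and in $L^1$ to a limit $\beta_m$ with $\E{\beta_m\mid\cF_{j_m}}=a_{j_m}/C_{j_m}$. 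Moreover $\sum_m\mu_j(P_m)=\mu_j(\cT\setminus\cT_{k_0})=1-C_{k_0}/C_j$ for every $j\ge k_0$.

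Second, I would use this to identify all subsequential weak limits. Fix a Lipschitz map $f\colon\cT\to\R$. Compactness of $\cT$ together with density of $\bigcup_m\cT_m$ forces $\mathrm{Haus}(\cT_{k_0},\cT)\to0$ as $k_0\to\infty$, so every $P_m$ has diameter at most some $\delta_{k_0}\to0$; replacing $f$ on each $P_m$ by its value at $U_{j_m}$ therefore perturbs $\int f\,d\mu_j$ by at most $\mathrm{Lip}(f)\,\delta_{k_0}$, and the perturbed integral equals $\sum_m f(U_{j_m})\,\mu_j(P_m)+\int_{\cT_{k_0}}f\,d\mu_j$, whose last term is $O(C_{k_0}/C_j)$. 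Passing to the limit $j\to\infty$ inside the sum is the one genuine estimate: from $\E{\beta_m}=\E{a_{j_m}/C_{j_m}}$ and the asymptotics $j_m=m^{(\ell+1)/\ell+o(1)}$ for the return times and $a_k/C_k=k^{-1+o(1)}$ (the moment computations behind \refFt{fact:ch}) one obtains $\sum_m\E{a_{j_m}/C_{j_m}}<\infty$, which gives uniform integrability of $(\mu_j(P_m))_m$ uniformly in $j$, and for the almost sure statement the summable-over-$m$ bound $\E{\sup_{j}\mu_j(P_m)\mid\cF_{j_m}}\le(a_{j_m}/C_{j_m})\bigl(1+\log(C_{j_m}/a_{j_m})\bigr)$ from Doob's inequality. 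Consequently $\sum_m\beta_m=1$ and $\int f\,d\mu_j\to\sum_m f(U_{j_m})\beta_m$ up to an $O(\delta_{k_0})$ error, so the oscillation of $\bigl(\int f\,d\mu_j\bigr)_j$ is at most $2\,\mathrm{Lip}(f)\,\delta_{k_0}$; letting $k_0\to\infty$ shows this sequence is almost surely convergent. Running this simultaneously over a countable $\|\cdot\|_\infty$-dense family of Lipschitz functions shows that almost surely $\mu_j$ converges weakly to a (necessarily unique) random probability measure $\mu$.

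Finally, for the support statement I would fix $k_0$ and $\veps>0$, consider the open neighbourhood $G_\veps:=\{x\in\cT:\dleb(x,\cT_{k_0})<\veps\}\supseteq\cT_{k_0}$, and bound $\mu(\cT_{k_0})\le\mu(G_\veps)\le\liminf_j\mu_j(G_\veps)$. Writing $\mu_j(G_\veps)=C_{k_0}/C_j+\sum_m\mu_j\bigl(P_m\cap B(U_{j_m},\veps)\bigr)$ and invoking the fringe length estimates of \cite{CH} (those underlying \refT{thm:ch}), one gets $\limsup_j\mu_j(G_\veps)\le h(k_0,\veps)$ with $h(k_0,\veps)\to0$ as $\veps\to0$, whence $\mu(\cT_{k_0})=0$. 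Since every non-leaf of $\cT$ lies on a geodesic between two points of some $\cT_m$ and hence, by convexity of the subtree $\cT_m$, in $\cT_m$ itself, a countable union over $m$ gives $\mu\bigl(\cT\setminus\{\text{leaves of }\cT\}\bigr)=0$, as desired. The crux of the argument is the uniform integrability estimate of the third step, equivalently ruling out escape of mass into ever-deeper parts of $\cT$; this is the only place where the exponent in the Poisson rate enters quantitatively, and everything else is soft once compactness of $\cT$ is available.
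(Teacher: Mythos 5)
This theorem is quoted from \cite[Theorem 4]{CH} and is not proved in the present paper, so there is no internal argument to compare you against; I will therefore assess the proposal on its own terms. The skeleton you set up is the natural one and, as far as I can tell, in the same spirit as what Curien and Haas do: condition on the Poisson process so that the $C_j$ are deterministic, fix $k_0$, decompose $\cT\setminus\cT_{k_0}$ into its components $P_m$, and observe that $\bigl(\mu_j(P_m)\bigr)_{j\ge j_m}=\bigl(\Lambda^{(m)}_j/C_j\bigr)_{j\ge j_m}$ is a bounded martingale. Your computation $\E{\Lambda^{(m)}_{j+1}\mid\cF_j}=\Lambda^{(m)}_j\bigl(1+a_{j+1}/C_j\bigr)=\Lambda^{(m)}_j\,C_{j+1}/C_j$ is correct, as is the observation that compactness of $\cT$ forces $\diam(P_m)\le 2\,\delta_{k_0}$ where $\delta_{k_0}$ is the Hausdorff distance from $\cT_{k_0}$ to $\cT$ (every point of $P_m$ is within $\delta_{k_0}$ of $\cT_{k_0}$, and in a tree one exits $P_m$ only through $U_{j_m}$). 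From there, the Cauchy estimate for $\int f\,d\mu_j$ over Lipschitz $f$ is a routine sandwich and your countable dense family argument closes the weak convergence.

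Two remarks on where the proposal is thin. First, the summability $\sum_m \E{\sup_j\mu_j(P_m)}<\infty$ is the real technical content, and you establish it by appeal to the heuristics $j_m=m^{(\ell+1)/\ell+o(1)}$ and $a_k/C_k=k^{-1+o(1)}$ together with a reference to unstated "moment computations behind Fact~\ref{fact:ch}''. That is a genuine gap as written: the cleanest route is to avoid $j_m$ asymptotics altogether and compute $\sum_m\E{h(j_m)}=\sum_{j>k_0}h(j)\,\p{U_j\in\cT_{k_0}}=\sum_{j>k_0}h(j)\,C_{k_0}/C_{j-1}$ for $h(j)\coloneqq (a_j/C_j)\bigl(1+\log(C_j/a_j)\bigr)$, then use the representation $C_j=(E_1+\cdots+E_j)^{1/(\ell+1)}$ of Fact~\ref{fact:repr} to get $\E{h(j)/C_{j-1}}=\bigo\bigl(j^{-1-1/(\ell+1)}\log j\bigr)$, which is summable; this is elementary but must be said. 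Second, your support-on-leaves argument via $G_\veps$ and unspecified "fringe length estimates of \cite{CH}'' is both vague and unnecessary. Once $\sum_m\beta_m=1$ almost surely is in hand, write $\cT=\cT_{k_0}\sqcup\bigsqcup_m P_m$; each $P_m$ is open in $\cT$, so by Portmanteau $\mu(P_m)\ge\liminf_j\mu_j(P_m)=\beta_m$, and summing gives $\sum_m\mu(P_m)\ge 1$, forcing $\mu(\cT_{k_0})=0$ with no extra estimates. Your observation that any non-leaf of $\cT$ lies on a geodesic between two points of some $\cT_{k_0}$ and hence in $\cT_{k_0}$ itself (by geodesic convexity of $\cT_{k_0}$) then concludes; that step is correct as stated.
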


With these results we can now prove Theorem~\ref{thm:gh}.
\begin{proof}[{\bf Proof of \refT{thm:gh}}]
The case $\ell=1$ is just the well-known almost sure GHP convergence of uniform ordered binary trees with uniform measure to the BCRT (e.g, \citet[Theorem~5]{CH13}), so we assume $\ell\geq2$. 
We work on the probability space where the equalities of \refP{prop:dist} hold, and condition on the a.s.\ event that $\cT$ is compact and $\mu$ exists, where $\mu$ is the uniform probability measure supported by the leaves of $\cT$.

Let $k:\N\to\N$ be such that $k(n)= \Omega\left(n^{1/100}\right)$ and $k(n) = o\left(n^{1/10}\right)$. For all $n\in\N$, write $\widehat{\bT}(n)= \left(v(\rT(n)),\frac{c}{n^{\alpha}}\cdot\dgr,\nu_n\right)$, $\widehat{\bT}_{k(n)} = \left(v(\rT_{k(n)}(n)),\frac{c}{n^\alpha}\cdot \dgr, \nu_{k(n),n}\right)$, $\boldsymbol{ \mathcal{T}}_{k(n)} = \left(\cT_{k(n)},\dleb,\mu_{k(n)}\right)$, and $\boldsymbol{\mathcal{T}} = \left(\cT,\dleb,\mu\right)$. Note that
\begin{align}
\dghp\left(\widehat{\bT}(n), \boldsymbol{\cT} \right)\le \dghp\left(\widehat{\bT}(n),\widehat{\bT}_{k(n)}\right)
+\dghp\left(\widehat{\bT}_{k(n)},\boldsymbol{\cT}_{k(n)}\right)
+\dghp\left(\boldsymbol{\cT}_{k(n)},\boldsymbol{\cT}\right).\label{in:ghp1}
\end{align}
By Propositions \ref{prop:finite} and \ref{prop:tight}, a.s. as $n\to\infty$,
\begin{equation}\label{in:ghp2}
\dghp\left(\widehat{\bT}(n),\widehat{\bT}_{k(n)}\right)
+\dghp\left(\widehat{\bT}_{k(n)},\boldsymbol{\cT}_{k(n)}\right)\to0.
\end{equation}

Furthermore, since $\cT_k\coloneqq (\cT_k,\dleb)$ is separable, the weak convergence of measures, i.e., $\mu_k\to\mu$ (a.s.\ exists by \refT{thm:meas}), is equivalent to the convergence of measures in the L\'evy-Prokhorov metric, i.e., $d_\rP(\mu_k,\mu)\to0$, where $d_\rP$ denotes the L\'evy-Prokhorov distance (defined in \refS{sec:gh}) on $\cT\coloneqq \overline{\bigcup_{k\in\N}\cT_k}$ (a.s. compact by \refT{thm:ch}), and $\mu_k$ is viewed as the measure on $\cT$ such that $\mu_k(\cT\setminus \cT_k)=0$. It follows that a.s.\ $\dghp\left(\boldsymbol{\cT}_{k(n)},\boldsymbol{\cT}\right)\to0$. Together with (\ref{in:ghp1}) and (\ref{in:ghp2}), this completes the proof.
\end{proof}

\section{Gromov-Hausdorff-Prokhorov topology}
\label{sec:gh}

In this section we review the definition of GHP distance and the topology it induces, referring the reader to the papers by \citet[Section 6.2]{Mi09} and \citet[Section 2.1]{ABGM} for greater details and further references.

We first give the standard and intuitive definition of GHP distance.
A {\em measured metric space} is a triple $(V,d,\nu)$ where $(V,d)$ is a metric space and $\nu$ is a finite non-negative Borel measure on $V$. 
Let $\rZ\coloneqq (Z,\delta)$ be a metric space. Given non-empty $A\subset Z$ and $\veps>0$, the {\em $\veps$-neighborhood} of $A$ is $A^\veps \coloneqq A_\delta^\veps \coloneqq \left\{x\in Z: \exists y\in A, \delta(x,y)<\veps\right\}$. The {\em Hausdorff distance} $\delta_\rH$ between two non-empty subsets $X,Y$ of $\rZ$ is
\[
\delta_\rH(X,Y) = \inf\left(\veps>0:X\subset Y^\veps,Y\subset X^\veps\right)~.
\]
Next, denote by $\cP(\rZ)$ the collection of all finite non-negative Borel measures on the measurable space $(Z,\cB(\rZ))$, where $\cB(\rZ)$ denotes the Borel $\sigma$-algebra of $\rZ$. The {\em L\'evy-Prokhorov distance} $\delta_\rP : \cP(Z)^2 \to [0,\infty)$ between two measures $\nu$ and $\nu'$ on $Z$ is
\[
\delta_\rP(\nu,\nu') = \inf\left\{\veps>0: \nu(A)\le \nu'(A^\veps) + \veps \mbox{ and } \nu'(A)\le \nu(A^\veps)+\veps,\forall A\in \cB(\rZ)\right\}.
\]
We can now define the standard metric used to define the  Gromov-Hausdorff-Prokhorov topology.
For two measured metric spaces $\bV = (V,d,\nu)$ and $\bV'=(V',d',\nu')$, define
\[
\dghp^\circ(\bV,\bV') = \inf \max\left\{\delta_\rH(\phi(V),\phi'(V')),\delta_\rP(\phi_*\nu,\phi'_*\nu')\right\},
\]
where the infimum is over all metric space $\rZ$ and all isometries $\phi,\phi'$ from $\bV,\bV'$ into $\rZ$,
and where $\phi_*\nu$ and $\phi'_*\nu'$ denote push-forward measures. On the space of measured metric spaces modulo isometry-equivalence (measured metric spaces $(V,d,\nu)$ and $(V',d',\nu')$ are {\em isometry-equivalent} if there exists a measurable bijective isometry $\Phi:V\to V'$ such that $\Phi_* \nu=\nu'$),
$\dghp^\circ$ is a metric that induces the GHP topology.

The definition above can be difficult to use, so we now state 
some alternative notions and results for showing GHP convergence.
For $(V,d)$ and $(V',d')$ two metric spaces, a {\em correspondence} between $V$ and $V'$ is a set $R\subset V\times V'$ such that for every $x\in V$, there is $x'\in V'$ with $(x,x')\in R$, and vice versa. We write $R(V,V')$ for the set of correspondences between $V$ and $V'$. The {\em distortion} of any $R\in R( V, V')$ with respect to $d$ and $d'$ is
\[
\dis\left(R;d,d'\right) = \sup\left\{|d(x,y) - d'(x',y')|: (x,x')\in R,  (y,y')\in R\right\}~.
\]
Furthermore, let $M(V,V')$ be the set of finite non-negative Borel measures on $V\times V'$. Denote by $p$ and $p'$ the projections from $V\times V'$ to $V$ and $V'$, respectively. Let $\nu$ and $\nu'$ be finite non-negative Borel measures on $(V,d)$ and $(V',d')$, respectively. The {\em discrepancy} of $\pi\in M(V,V')$ with respect to $\nu$ and $\nu'$ is 
\[
\rD\left(\pi;\nu,\nu'\right) = \Vert \nu - p_* \pi \Vert + \Vert \nu' - p'_* \pi \Vert,
\]
where $\|\cdot\|$ denotes the total variation for a signed measure. Given measured metric spaces $\bV=(V,d,\nu)$ and $\bV'=(V',d',\nu')$, we define the {\em Gromov-Hausdorff-Prokhorov distance} by
\[
\dghp(\bV,\bV') = \inf \max\left\{\frac12\cdot\dis(R;d,d'),\rD(\pi;\nu,\nu'),\pi(R^c) \right\},
\]
where the infimum is over all $R\in R(V,V')$ and $\pi\in M(V,V')$. 

Writing $\K$ for the set of all {\em compact} measured metric spaces modulo isometry-equivalence, $(\K,\dghp)$ is a Polish space; see \citet{ADH}. GHP convergence refers to convergence in this space. (It can be shown that $\dghp^\circ$ and $\dghp$ induce the same topology on $\K$.) 

The {\em Gromov-Hausdorff distance} between two metric spaces $(V,d)$ and $(V',d')$ is given by $\dgh\left((V,d),(V',d')\right) =  \inf \frac12\cdot\dis(R;d,d')$, where the infimum is over all $R\in R(V,V')$.
\section{Coupling between combinatorial trees and real trees}\label{sec:coupling}

We prove \refP{prop:dist} in this section, starting by recalling some basic facts about Dirichlet distributions. Let $\mathbf{a}=(a_1,\ldots,a_n)\in \R_+^n$. The {\em Dirichlet distribution} with parameter $\mathbf{a}$, denoted by $\mathrm{Dir}(\mathbf{a})$, has density $f(x_1,\ldots,x_n; \mathbf{a}) = \frac{1}{\rB(\mathbf{a})} \prod_{i=1}^n x_i^{a_i-1}$, for $x_1,\ldots,x_n>0$ with $\sum_{i=1}^n x_i=1$ and $\rB(\mathbf{a}) \coloneqq \frac{\prod_{i=1}^n \Gamma(a_i)}{\Gamma(\sum_{i=1}^n a_i)}$. Let $G_i \sim \mathrm{Gamma}(a_i)$ be independent variables for $i=1,\ldots,n$. It is well-known that $\left( \frac{G_{1}}{\sum_{i=1}^n G_{i}},\ldots,\frac{G_{n}}{\sum_{i=1}^n G_{i}}\right)
\sim \mathrm{Dir}(\mathbf{a})$, and this is independent of $
\sum_{i=1}^n G_{i} \sim \mathrm{Gamma}\left(\sum_{i=1}^n a_i\right)$.

\begin{lem}{\em (\citep[Lemma 2.2]{GH}).}\label{lem:dircond}
Suppose that $(X_1,\ldots,X_n) \sim \mathrm{Dir}(1,\ldots,1)$. Let $J$ have the conditional distribution $\p{J=j~|~X_1,\ldots,X_n}=X_j$. Then $\p{J=j} = \frac{1}{n}$. Furthermore, conditioned on $J=j$, 
$(X_1,\ldots,X_n)\sim \mathrm{Dir}(\underbrace{1,\ldots,1}_{j-1},2,\underbrace{1,\ldots,1}_{n-j})$.
Finally, if $U$ is an independent Uniform$(0,1)$-variable, then, conditioned on $J=j$,
\[
(X_1,\ldots,X_{j-1},U\cdot X_j, (1-U)\cdot X_j,X_{j+1},\ldots,X_n) 
\sim \mathrm{Dir}(\underbrace{1,\ldots,1}_{n+1}).
\]
\end{lem}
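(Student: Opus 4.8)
The plan is to prove the three assertions in turn, using the explicit density of the flat Dirichlet distribution together with the gamma representation recalled just above.

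First, since $\mathrm{Dir}(1,\ldots,1)$ is the uniform distribution on the standard simplex, each coordinate has mean $\E{X_j}=1/n$, so by the tower property $\p{J=j}=\E{\p{J=j~|~X_1,\ldots,X_n}}=\E{X_j}=1/n$. For the second assertion I would argue at the level of densities via Bayes' rule: the density of $(X_1,\ldots,X_n)$ on $\{x_i>0,\sum_i x_i=1\}$ is the constant $\rB(1,\ldots,1)^{-1}=(n-1)!$, so the sub-probability density of the event $\{J=j\}$ together with $(X_1,\ldots,X_n)=(x_1,\ldots,x_n)$ is $(n-1)!\,x_j$; dividing by $\p{J=j}=1/n$ yields the conditional density $n!\,x_j=\tfrac{\Gamma(n+1)}{\Gamma(2)\prod_{i\ne j}\Gamma(1)}\,x_j^{2-1}\prod_{i\ne j}x_i^{1-1}$, which is exactly the density of $\mathrm{Dir}(\underbrace{1,\ldots,1}_{j-1},2,\underbrace{1,\ldots,1}_{n-j})$.

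For the third assertion I would pass to the gamma representation. Conditioned on $J=j$, write $(X_1,\ldots,X_n)=(G'_i/S')_{i\le n}$ where the $G'_i$ are independent with $G'_i\sim\mathrm{Gamma}(1)$ for $i\ne j$, $G'_j\sim\mathrm{Gamma}(2)$, and $S'=\sum_i G'_i$. Split $G'_j=H_1+H_2$ with $H_1,H_2\sim\mathrm{Gamma}(1)$ independent; then $G'_1,\ldots,G'_{j-1},H_1,H_2,G'_{j+1},\ldots,G'_n$ are $n+1$ independent $\mathrm{Gamma}(1)$ variables, so normalizing them by their sum (still equal to $S'$) produces a vector distributed as $\mathrm{Dir}(\underbrace{1,\ldots,1}_{n+1})$. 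By beta-gamma algebra, $V\coloneqq H_1/(H_1+H_2)\sim\mathrm{Beta}(1,1)=\mathrm{Uniform}(0,1)$ is independent of $H_1+H_2=G'_j$, hence independent of $(G'_i)_{i\le n}$ and of $S'$, and therefore independent of $(X_1,\ldots,X_n)$ given $J=j$. Since $VX_j=H_1/S'$ and $(1-V)X_j=H_2/S'$, the vector $(X_1,\ldots,X_{j-1},VX_j,(1-V)X_j,X_{j+1},\ldots,X_n)$ is precisely the normalized $(n+1)$-tuple above, so it is $\mathrm{Dir}(\underbrace{1,\ldots,1}_{n+1})$; and because $V$ is Uniform$(0,1)$ and independent of $(X_1,\ldots,X_n)$ conditionally on $J=j$, it may be replaced by any externally supplied independent Uniform$(0,1)$ variable $U$ without changing the conditional joint law, which is the stated conclusion.

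The computations are elementary; the only point requiring care is the last assertion, where one must confirm that the splitting ratio $V$ is genuinely independent of the entire conditioned vector (so that an external $U$ can be substituted). This follows cleanly from the standard independence of $H_1/(H_1+H_2)$ and $H_1+H_2$ in beta-gamma algebra, and I expect this independence bookkeeping to be the main (mild) obstacle.
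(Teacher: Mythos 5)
The paper cites this result from \citep[Lemma 2.2]{GH} without reproducing a proof, so there is no in-paper argument to compare against; your proof stands on its own and is correct. The first two claims follow cleanly from the tower rule and a Bayes computation with the constant density $(n-1)!$ of the flat Dirichlet (and your identification of $n!\,x_j$ with the $\mathrm{Dir}(1,\ldots,2,\ldots,1)$ density is right). For the third claim, the gamma-Dirichlet representation with $G'_j=H_1+H_2$ and the beta-gamma independence of $H_1/(H_1+H_2)$ from $H_1+H_2$ correctly delivers both the $\mathrm{Dir}(1,\ldots,1)$ law on $n+1$ coordinates and the conditional independence of the internal splitting ratio $V$ from $(X_1,\ldots,X_n)$; since the joint law of $(V,(X_1,\ldots,X_n))$ given $J=j$ then matches that of $(U,(X_1,\ldots,X_n))$, the external-$U$ substitution is justified. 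This is the same beta-gamma style of argument the paper itself leans on (Facts~\ref{fact:dirbeta} and~\ref{fact:repr}), so there is no divergence in method worth flagging.
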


\begin{fact}
\label{fact:dirbeta}
Fix $k\in\N$ and let $B\sim \mathrm{Beta}(k,1)$. If $(X_1,\ldots,X_k)\sim \mathrm{Dir}(1,\ldots,1)$, independent of $B$, then $(B\cdot X_1,\ldots, B\cdot X_k,1-B) \sim \mathrm{Dir}(1,\ldots,1)$.
\end{fact}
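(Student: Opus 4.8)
The plan is to realize the joint distribution hypothesized in the statement via the standard gamma construction of the Dirichlet and Beta laws, after which the conclusion becomes an instance of that same construction with one extra coordinate. First I would take $G_1,\dots,G_k$ i.i.d.\ $\mathrm{Gamma}(1)$, set $S\coloneqq G_1+\dots+G_k$ and $X_i\coloneqq G_i/S$; by the gamma representation of the Dirichlet distribution recalled above, $(X_1,\dots,X_k)\sim\mathrm{Dir}(1,\dots,1)$, and this vector is independent of $S\sim\mathrm{Gamma}(k)$. Then I would take an independent $G'\sim\mathrm{Gamma}(1)$ and set $B\coloneqq S/(S+G')$, so that $B\sim\mathrm{Beta}(k,1)$ by the beta-gamma identity; since $B$ is a measurable function of $(S,G')$, it is independent of $(X_1,\dots,X_k)$. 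Hence the pair $\bigl(B,(X_1,\dots,X_k)\bigr)$ built this way has precisely the joint distribution assumed in the statement, and it suffices to identify the law of $(BX_1,\dots,BX_k,1-B)$ for this coupling.

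For this coupling one computes $BX_i=\frac{S}{S+G'}\cdot\frac{G_i}{S}=\frac{G_i}{S+G'}$ for each $i$, and $1-B=\frac{G'}{S+G'}$. Writing $T\coloneqq G_1+\dots+G_k+G'=S+G'$, this gives
\[
(BX_1,\dots,BX_k,1-B)=\left(\frac{G_1}{T},\dots,\frac{G_k}{T},\frac{G'}{T}\right).
\]
Since $G_1,\dots,G_k,G'$ are $k+1$ i.i.d.\ $\mathrm{Gamma}(1)$ variables and $T$ is their sum, the same gamma representation (now with $n=k+1$ parameters all equal to $1$) shows the right-hand side is $\mathrm{Dir}(1,\dots,1)$ with $k+1$ parameters, which is the claim.

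There is no serious obstacle here; the only point I would state carefully is that the coupling above genuinely reproduces the hypothesized joint law, namely that $B$ is constructed from the \emph{same} partial sum $S$ that normalizes the $G_i$ while remaining independent of $(X_1,\dots,X_k)$ -- this is exactly what the independence of $(X_1,\dots,X_k)$ and $S$ in the gamma construction provides. (Alternatively, one can verify the identity directly by change of variables along the scaling map $(b,x_1,\dots,x_{k-1})\mapsto(bx_1,\dots,bx_{k-1},b(1-x_1-\dots-x_{k-1}))$, whose Jacobian $b^{k-1}$ cancels the corresponding factor in the joint density of $B$ and $(X_1,\dots,X_{k-1})$, leaving a constant density on the $k$-simplex; the gamma argument avoids this computation.)
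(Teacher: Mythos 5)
The paper omits the proof of this fact, stating only that it ``follows from standard calculations''; your argument is precisely the standard gamma-representation calculation the authors have in mind, building $(X_1,\dots,X_k)$ and $B$ from $k+1$ i.i.d.\ $\mathrm{Gamma}(1)$ variables so that the claimed independence is automatic and the conclusion is immediate from the same representation with one extra coordinate. The proof is correct and complete; the key step of producing $B$ from the \emph{same} sum $S$ that normalizes the $G_i$, while invoking the independence of $(X_1,\dots,X_k)$ from $S$, is exactly the point that needs to be (and is) made carefully.
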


Recall that the real trees $\cT_1,\cT_2,\ldots$ are constructed by aggregating random intervals of lengths $C_1,C_2-C_1,\ldots$, for which we derive the following representation.

\begin{fact}\label{fact:repr}
Let $E_1,E_2,\ldots$ be independent Exponential$(1)$-variables, and let $C_1 = E_1^{\frac{1}{\ell+1}}$. Let $C_1,B_1,B_2,\ldots$ be independent variables such that $B_k\sim \mathrm{Beta}((\ell+1)k,1)$ for all $k\in\N$. For all integer $k\ge2$, let
\begin{equation}\label{eq:repr}
C_k = \frac{C_1}{B_1\cdots B_{k-1}},
\end{equation}
then $C_k \eqdist (E_1+\cdots +E_k)^{\frac{1}{\ell+1}}$, independent of $B_1,\ldots,B_{k-1}$. Moreover, $(C_k:k\in\N)$ are the points of an inhomogeneous Poisson process on $(0,\infty)$ with intensity $(\ell+1)t^\ell dt$.
\end{fact}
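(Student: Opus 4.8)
The plan is to pass to $(\ell+1)$-st powers, where the whole statement reduces to the beta--gamma algebra. Put $S_1 := C_1^{\ell+1} = E_1$, so $S_1 \sim \Ga(1)$, and note that the change of variables $x \mapsto x^{\ell+1}$ sends a $\B((\ell+1)k,1)$-variable to a $\B(k,1)$-variable; hence, with $D_j := B_j^{\ell+1} \sim \B(j,1)$, one has $C_k^{\ell+1} = S_1/(D_1 \cdots D_{k-1})$, and it suffices to show that the sequence $\bigl(S_1/(D_1\cdots D_{k-1})\bigr)_{k\ge1}$ has the joint law of the partial sums $S_k := E_1+\cdots+E_k$ of an i.i.d.\ $\Exp(1)$ sequence, together with the asserted independence.

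The key input is the beta--gamma identity: for independent $G_a\sim\Ga(a)$ and $G_b\sim\Ga(b)$, the variables $G_a+G_b$ and $G_a/(G_a+G_b)$ are independent, with laws $\Ga(a+b)$ and $\B(a,b)$ respectively. I would use this by running the construction in reverse. Starting from the i.i.d.\ $\Exp(1)$ sequence with partial sums $S_k$, the ratio $S_j/S_{j+1} = S_j/(S_j+E_{j+1})$ is $\B(j,1)$-distributed, so its $(\ell+1)$-st root is $\B((\ell+1)j,1)$-distributed, and the telescoping identity $S_1/\prod_{j=1}^{k-1}(S_j/S_{j+1}) = S_k$ is immediate. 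An induction on $k$ --- peeling off one fresh summand $E_{j+1}$ at a time, using at each step that $E_{j+1}$ is independent of every quantity built from $E_1,\dots,E_j$ and applying the beta--gamma identity to the pair $(S_j, E_{j+1})$ --- then shows that the joint law of $\bigl(C_1, B_1, B_2, \dots\bigr)$ matches that of $\bigl(S_1^{1/(\ell+1)},\, (S_1/S_2)^{1/(\ell+1)},\, (S_2/S_3)^{1/(\ell+1)},\dots\bigr)$, and in particular establishes the required independence. Since $C_k$ is the same deterministic function of $(C_1, B_1,\dots,B_{k-1})$ as $S_k^{1/(\ell+1)}$ is of the corresponding roots, it follows that $\bigl(C_k^{\ell+1}\bigr)_{k\ge1} \eqdist (S_k)_{k\ge1}$, which gives $C_k \eqdist (E_1+\cdots+E_k)^{1/(\ell+1)}$.

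For the last assertion I would invoke the standard cumulative-intensity characterization of an inhomogeneous Poisson process: if $\Lambda(t) = \int_0^t \lambda$ is continuous and strictly increasing with $\Lambda(0)=0$ and $\Lambda(\infty)=\infty$, then $0<C_1<C_2<\cdots$ are the points of a Poisson process on $(0,\infty)$ of intensity $\lambda(t)\,dt$ if and only if $\Lambda(C_1), \Lambda(C_2)-\Lambda(C_1),\dots$ are i.i.d.\ $\Exp(1)$. Here $\Lambda(t) = t^{\ell+1}$, and the previous step shows exactly that the increments $C_k^{\ell+1} - C_{k-1}^{\ell+1}$ (with $C_0 := 0$) are i.i.d.\ $\Exp(1)$; equivalently, one may simply note that the image under $t\mapsto t^{\ell+1}$ of the claimed process is a unit-rate Poisson process by the mapping theorem.

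The only real obstacle is the independence bookkeeping in the inductive step. The equality $C_k^{\ell+1} \eqdist S_k$ as one-dimensional marginals is immediate; the content is the joint statement, and because the $C_k$ share the common factor $C_1$ and overlapping blocks of the $B_j$, one must track carefully at each stage which previously constructed variables the next beta factor is independent of, and feed exactly the right pair into the beta--gamma identity. The change-of-variables computations for powers of beta variables and the Poisson characterization are routine.
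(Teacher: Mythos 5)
Your key identification is false, and it fails at exactly the point you flag as the ``only real obstacle.'' The beta--gamma identity applied to the pair $(S_j,E_{j+1})$ makes the ratio $S_j/S_{j+1}$ independent of the \emph{larger} sum $S_{j+1}$; iterating, it shows that $(S_1/S_2,\ldots,S_{k-1}/S_k,\,S_k)$ are mutually independent. It does \emph{not} make $S_1$ independent of the ratios, and indeed it is not: writing $S_1=S_2\cdot(S_1/S_2)$ with $S_2$ independent of $S_1/S_2\sim\mathrm{Beta}(1,1)$, one gets $\E{S_1\,(S_1/S_2)}=\E{S_2}\,\E{(S_1/S_2)^2}=2\cdot\tfrac13=\tfrac23\neq\tfrac12=\E{S_1}\,\E{S_1/S_2}$. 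Hence the tuple $\bigl(S_1^{1/(\ell+1)},(S_1/S_2)^{1/(\ell+1)},(S_2/S_3)^{1/(\ell+1)},\ldots\bigr)$ is not a vector of independent variables, and your asserted matching of its joint law with that of the independent tuple $(C_1,B_1,B_2,\ldots)$ cannot hold; the induction you sketch, carried out correctly, yields independence of the ratios from the \emph{last} sum $S_k$, not from $S_1$. The gap is not cosmetic: starting from genuinely independent $C_1,B_1$ and setting $C_2=C_1/B_1$, one has $C_2^{\ell+1}=E_1/B_1^{\ell+1}$ with $B_1^{\ell+1}\sim\mathrm{Uniform}(0,1)$ independent of $E_1$, so $\p{C_2^{\ell+1}>t}=(1-e^{-t})/t$, a heavy-tailed law that is not $\mathrm{Gamma}(2)$. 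Thus your reduction (``it suffices to show that $S_1/(D_1\cdots D_{k-1})$ has the joint law of the partial sums'') is not provable from those hypotheses.

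To be fair, this imprecision is already present in the statement of the Fact itself (the paper gives no proof, calling it a standard calculation): what is true, and what the paper actually uses later (e.g.\ in Lemma~\ref{lem:frag}, where $C_k=B_kC_{k+1}$ with $B_k$ independent of $C_{k+1}$, and in Lemma~\ref{lem:urnmombd}), is the statement with the roles reversed. The clean version runs exactly your ``reverse'' construction with correct bookkeeping: set $C_k:=(E_1+\cdots+E_k)^{1/(\ell+1)}$ and \emph{define} $B_j:=C_j/C_{j+1}$; your induction (peeling off $E_{j+1}$ and applying beta--gamma to $(S_j,E_{j+1})$) then shows that for every $k$ the variables $B_1,\ldots,B_{k-1},C_k$ are mutually independent with $B_j\sim\mathrm{Beta}((\ell+1)j,1)$ and $C_k^{\ell+1}\sim\mathrm{Gamma}(k)$; the identity \eqref{eq:repr} is the telescoping product, and the Poisson-process claim follows from the mapping theorem with $\Lambda(t)=t^{\ell+1}$, as you say. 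In that formulation your outline is essentially right; as written, with $C_1$ cast as the variable independent of all the $B_j$, the central step is false.
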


Facts \ref{fact:dirbeta} and \ref{fact:repr} follow from standard calculations, so we omit the proof. 

\begin{rmk}
In the sequel, we use the representation (\ref{eq:repr}) of $C_k$ for all integer $k\ge2$.
\end{rmk}

Next, recall the definition of the embellished tree $\cT(n)$ from \refS{sec:intro}. A (non-graph-theoretic) {\em path}, $S$, in an embellished tree $\cT(n)$ is defined as the corresponding path in the underlying real tree, and the path-length, $|S|$, is equal to the intrinsic length of $S$. An {\em edge} in an embellished tree is a path between two adjacent vertices, and the {\em edge-length} refers to the path-length of the edge. We show that the rescaled edge-lengths of the embellished tree $\cT(n)$ are Dirichlet distributed.

Recall that, for all $k\in\N$, $\cT'_k$ is the embellished tree $\cT(k\ell)$ without the latest branch (i.e., the $(k+1)$:th branch, of length $C_{k+1}-C_{k}$), but it includes the the embellished vertex to which the $(k+1)$:th branch is to be attached. For all $k\in\N$ and $i\in\{0,\ldots,\ell-1\}$, let $E_{k,i}(0),\ldots,E_{k,i}((k-1)(\ell+1)+i)$ be the edge-lengths of the embellished tree $\cT((k-1)\ell+i)$, in the order of appearance. For $i=\ell$, let $E_{k,i}(0),\ldots,E_{k,i}((k-1)(\ell+1)+i)$ be the edge-lengths of $\cT'((k-1)\ell+i)$. Finally, let $E_{k+1,0}((\ell+1)k)=C_{k+1}-C_k$. If two edges appear at the same time, the one closer to the root has a smaller index. 

\begin{lem}
\label{lem:frag}
Fix $k\in\N$ and $i\in\{0,\ldots,\ell\}$. We have
\[
\frac{1}{C_k}\cdot \left(E_{k,i}(0),\ldots,E_{k,i}((k-1)(\ell+1)+i)\right) \sim 
\mathrm{Dir}(1,\ldots,1).
\]
\end{lem}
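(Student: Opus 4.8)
The plan is to prove Lemma~\ref{lem:frag} by induction on the pair $(k,i)$ ordered lexicographically, tracking how the vector of rescaled edge-lengths evolves each time a vertex is inserted (within a block of $\ell$ insertions) and each time a new branch is attached (at the end of a block). The base case is $k=1$, $i=0$: here $\cT(0)=\cT_1$ is a single branch of length $C_1$, so the single ``edge-length'' rescaled by $C_1$ is the degenerate $\mathrm{Dir}(1)$, which holds trivially.

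For the inductive step within a block, suppose $\frac{1}{C_k}\cdot(E_{k,i}(0),\ldots,E_{k,i}((k-1)(\ell+1)+i))\sim\mathrm{Dir}(1,\ldots,1)$ for some $i\in\{0,\ldots,\ell-1\}$. Passing from $i$ to $i+1$ amounts to choosing an edge uniformly at random and splitting it at a uniform interior point. Conditionally on the edge-lengths, the probability of picking a given edge is proportional to its length (since the insertion point is uniform with respect to normalized Lebesgue measure on the whole tree, i.e.\ on the union of the branches, and the total length is fixed at $C_k$); so the index $J$ of the chosen edge satisfies exactly the hypothesis of \refL{lem:dircond} with the $X$'s being the rescaled edge-lengths. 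Applying the last conclusion of \refL{lem:dircond}, after splitting edge $J$ at an independent uniform point $U$ into pieces $U\cdot X_J$ and $(1-U)\cdot X_J$, the resulting vector of rescaled edge-lengths is $\mathrm{Dir}(1,\ldots,1)$ of length one greater. The only bookkeeping needed here is to check that the indexing convention (``the edge closer to the root has the smaller index'') just amounts to a fixed permutation of coordinates, under which $\mathrm{Dir}(1,\ldots,1)$ is invariant, so the conclusion is unaffected. This gives the statement for $\cT((k-1)\ell+i+1)$ when $i+1\le\ell-1$, and also for $i=\ell$ (the vector for $\cT'((k-1)\ell+\ell-1)$, which includes the to-be-attached embellished vertex but not the new branch).

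For the step crossing from one block to the next, namely from $\cT'(k\ell)$ (equivalently the $i=\ell$ case at level $k$, with edge-lengths rescaled by $C_k$) to $\cT((k)\ell)=\cT(k\ell)$ and then to the $i=0$ case at level $k+1$ (rescaled by $C_{k+1}$), I would use \refFt{fact:repr} and \refFt{fact:dirbeta}. By the $i=\ell$ case we have $\frac{1}{C_k}\cdot(E_{k,\ell}(0),\ldots,E_{k,\ell}(k(\ell+1)-1))\sim\mathrm{Dir}(1,\ldots,1)$ (a vector of length $k(\ell+1)$), independent of everything used to build $C_{k+1}$ from $C_k$; in particular, by the representation (\ref{eq:repr}), $C_{k+1}=C_k/B_k$ with $B_k\sim\mathrm{Beta}((\ell+1)k,1)$ independent of this Dirichlet vector. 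Attaching a branch of length $C_{k+1}-C_k$ appends one new coordinate, so the new edge-length vector rescaled by $C_{k+1}$ is $\left(B_k\cdot\frac{E_{k,\ell}(0)}{C_k},\ldots,B_k\cdot\frac{E_{k,\ell}(k(\ell+1)-1)}{C_k},\,1-B_k\right)$, since $\frac{C_{k+1}-C_k}{C_{k+1}}=1-B_k$. By \refFt{fact:dirbeta} (with the role of $k$ there played by $k(\ell+1)$), this vector is $\mathrm{Dir}(1,\ldots,1)$ of length $k(\ell+1)+1=((k+1)-1)(\ell+1)+0+1$, which is precisely the claim for $\cT((k+1)\cdot 0\ldots)$, i.e.\ the $i=0$ case at level $k+1$. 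This closes the induction.

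The main obstacle, and the place requiring care rather than new ideas, is matching the combinatorial/geometric description of the insertion step with the probabilistic hypothesis of \refL{lem:dircond}: one must be sure that ``insert a vertex at a uniform point of the tree'' really does select an edge with probability proportional to its (rescaled) length and then splits it at a conditionally uniform point, and that the newly created coordinates are inserted in the positions dictated by the stated indexing rule so that the permutation-invariance of $\mathrm{Dir}(1,\ldots,1)$ can be invoked cleanly. A secondary point to verify is the independence claims: that the Dirichlet vector at the end of block $k$ is independent of $B_k$ (hence of $C_{k+1}-C_k$ given $C_k$), which follows from \refFt{fact:repr} together with the fact that all the uniform splitting variables used within block $k$ are independent of the Poisson/beta data, and that \refFt{fact:dirbeta} preserves this by construction.
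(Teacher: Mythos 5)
Your proof is correct and follows essentially the same route as the paper's: induction on $(k,i)$, within-block steps via Lemma~\ref{lem:dircond}, and the block-crossing step via the representation $C_{k}=B_k C_{k+1}$ from Fact~\ref{fact:repr} together with Fact~\ref{fact:dirbeta}. (One small notational slip: the tree relevant to the $i=\ell$ case is $\cT'_k$, which the paper also writes as $\cT'(k\ell)$, not $\cT'((k-1)\ell+\ell-1)$; this does not affect the argument.)
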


\begin{proof}
We prove by induction on $i$ and $k$. Let $U\sim \mathrm{Uniform}(0,1)$, independent of everything else. $E_{1,0}(0)\sim \mathrm{Dir}(1)$ is trivial. For $k=1$ and $i=1$, we may assume that $E_{1,1}(0) = C_1U$ and $E_{1,1}(1)=C_1(1-U)$. So $\frac{1}{C_1}\cdot \left(E_{1,1}(0),E_{1,1}(1)\right) \sim \mathrm{Dir}(1,1)$. 

Now suppose that the claim holds for some $k\in\N$ and $i\in\{0,\ldots,\ell-1\}$. We are about to insert a vertex uniformly over $\cT((k-1)\ell+i)$, for the normalized Lebesgue length measure. Let $V\sim \mathrm{Uniform}(0,1)$, independent of everything else. Conditioned on selecting the edge with length $E_{k,i}(j)$ to insert such a vertex, for an appropriate $j$, by \refL{lem:dircond} we have that the $(k-1)(\ell+1)+i+2$ dimensional vector
\begin{align*}
\frac{1}{C_k}\cdot \left(E_{k,i}(0),\ldots,E_{k,i}(j)V,E_{k,i}(j)(1-V),\ldots,E_{k,i}((k-1)(\ell+1)+i)\right)
\sim \mathrm{Dir}(1,\ldots,1).
\end{align*}
The above holds regardless of the choice of $j$, so it also holds without conditioning and the claim follows for $k$ and $i+1$.

Next, we show that the claim holds for $k+1$ and $i=0$ as well. Recall that $\cT(k\ell)$ is obtained from attaching a branch of length $C_{k+1}-C_{k}$ to $\cT'_k$. So
\begin{align*}
\left(E_{k+1,0}(0),\ldots,E_{k+1,0}((\ell+1)k)\right) 
=
\left(E_{k,\ell}(0),\ldots,E_{k,\ell}((k-1)(\ell+1)+\ell),C_{k+1}-C_{k}\right).
\end{align*}
It follows from \refFt{fact:repr} that we may write $C_{k} = B_k C_{k+1}$ for $B_k\sim \mathrm{Beta}((\ell+1)k,1)$, independent of $C_{k+1}$. So $
C_{k+1} - C_k = C_{k+1}(1-B_k)$ where $1-B_k \sim \mathrm{Beta}(1,(\ell+1)k)$. We have proved that the claim holds for $k$ and $i=\ell$: 
\[
\left(\frac{E_{k,\ell}(0)}{C_k},\ldots,\frac{E_{k,\ell}((k-1)(\ell+1)+\ell)}{C_k}\right)\sim \mathrm{Dir}(1,\ldots,1).
\]
 Then by \refFt{fact:dirbeta},
\[
\frac{1}{C_{k+1}}\cdot
 \left(E_{k,\ell}(0),\ldots,E_{k,\ell}((k-1)(\ell+1)+\ell),C_{k+1}(1-B_k)\right)
  \sim \mathrm{Dir}(\underbrace{1,\ldots,1}_{(\ell+1)k+1}).
\]
The lemma follows by induction.
\end{proof}

Recall that $v(\cT(n))$ is the union of the embellished vertices, the leaves, and the root, and recall the definition of the combinatorial tree $\rT(n)$ from \refS{sec:intro}.

\begin{lem}\label{lem:dist}
There exists a probability space where 
\begin{equation}\label{eq:1}
((v(\cT(n)),\dgr):n\in\N) =((v(\rT(n)),\dgr):n\in\N) \eqqcolon (\rT(n):n\in\N),
\end{equation}
considered up to isometry-equivalence.
\end{lem}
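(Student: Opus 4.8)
\textbf{Proof plan for Lemma~\ref{lem:dist}.}

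The plan is to build the common probability space by induction on $n$, showing that at each step the combinatorial tree $\rT(n)$ and the vertex-set-with-graph-distance $(v(\cT(n)),\dgr)$ of the embellished tree can be coupled to coincide up to isometry. The key observation is that as a metric space with the graph distance, $(v(\cT(n)),\dgr)$ forgets all the intrinsic length information and retains only the combinatorial structure: which vertices are adjacent, which are leaves, and which is the root. So what must be matched is precisely the sequence of random combinatorial moves. First I would recall the two growth rules side by side: $\rT(n)$ is obtained from $\rT(n-1)$ by inserting a new vertex $v_n$ in the interior of a \emph{uniformly chosen edge}, and additionally, when $\ell\mid n$, attaching a new leaf at $v_n$; meanwhile $\cT(n)$ is obtained from $\cT(n-1)$ by inserting an embellished vertex at a point chosen \emph{uniformly with respect to normalized Lebesgue length}, and when $\ell\mid n$, attaching a new branch at that point (which in the graph-distance view becomes a new leaf edge at that vertex).

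The main point, and the only non-formal one, is to verify that inserting a vertex uniformly by Lebesgue length in $\cT(n-1)$ has exactly the same effect on the combinatorial structure $(v(\cT(n-1)),\dgr)$ as inserting a vertex in a uniformly chosen edge of $\rT(n-1)$ does. This is where \refL{lem:frag} is used: by that lemma, the rescaled edge-lengths $\tfrac{1}{C_k}(E_{k,i}(0),\dots,E_{k,i}((k-1)(\ell+1)+i))$ are $\mathrm{Dir}(1,\dots,1)$-distributed, hence exchangeable, and in particular each edge carries, conditionally on the partition into edges, an equal expected share of the total length. More precisely, picking a point uniformly by length and then asking ``which edge did it land in'' is exactly the experiment of \refL{lem:dircond}: if $(X_1,\dots,X_m)\sim\mathrm{Dir}(1,\dots,1)$ are the normalized edge-lengths and $J$ is the index of the edge containing a uniform-by-length point, then $\p{J=j}=X_j$ marginally but $\p{J=j}=1/m$ after integrating, which is precisely ``choose an edge uniformly at random.'' So conditionally on $(v(\cT(n-1)),\dgr)=\rT(n-1)$, the edge selected for subdivision in the embellished tree is uniform among the edges, matching the combinatorial rule; and when $\ell\mid n$, the new leaf is attached at the newly created vertex in both constructions. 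This gives the inductive step.

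Concretely, I would proceed as follows. \textbf{Base case:} $\rT(0)$ and $\cT(0)=\cT_1$ are both a single edge with one endpoint labelled a leaf and the other the root, so $(v(\cT(0)),\dgr)=\rT(0)$ trivially. \textbf{Inductive step:} assume the copies have been constructed up to time $n-1$ with $(v(\cT(m)),\dgr)=\rT(m)$ for all $m\le n-1$ (and the labelling of leaves and root agreeing). Given the edge-length data, which by \refL{lem:frag} is Dirichlet$(1,\dots,1)$ hence has the property that a uniform-by-length point lands in each edge with probability $1/m$ where $m$ is the number of edges, I would define $\rT(n)$ to be the tree obtained from $\rT(n-1)$ by subdividing exactly the edge into which the embellished insertion point fell (and adding a leaf at the new vertex if $\ell\mid n$). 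Then $(v(\cT(n)),\dgr)=\rT(n)$ by construction, and the marginal law of the move just described is exactly the prescribed law of R\'emy-type growth, so $(\rT(n):n\in\N)$ constructed this way has the correct joint distribution. I expect the main obstacle to be purely expository: making precise the identification ``the combinatorial skeleton of $(v(\cT(n)),\dgr)$ evolves by a uniform-edge-subdivision rule'' and citing \refL{lem:dircond}/\refL{lem:frag} cleanly, together with a careful bookkeeping of leaf and root labels so that the equality genuinely holds up to \emph{isometry-equivalence} of labelled rooted trees and not merely as abstract metric spaces. No substantive probabilistic difficulty remains once \refL{lem:frag} is in hand.
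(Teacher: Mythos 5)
Your proposal is correct and takes essentially the same route as the paper: induction on $n$, using Lemma~\ref{lem:frag} together with Lemma~\ref{lem:dircond} to see that the Lebesgue-uniform insertion point lands in a uniformly chosen edge of the embellished tree, so that each growth step matches the uniform-edge-subdivision rule of $\rT(n)$ (with the extra leaf attached at the new vertex when $\ell\mid n$). The only difference is cosmetic: you construct the coupling explicitly by subdividing the very edge containing the insertion point, while the paper argues equality in law at each step and then declares the trees equal.
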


\begin{proof}
We prove by induction on $n$. For $n=0$, $(v(\cT(0)),\dgr)= \rT(0)$ (both consist of an edge). Now assume that, for $n\in\N$ such that $\ell$ does not divide $n$, $\left(v(\cT(n-1)),\dgr\right) = \rT(n-1)$. We are about to insert a vertex into $\cT(n-1)$ and $\rT(n-1)$ respectively. It follows from Lemmas \ref{lem:dircond} and \ref{lem:frag} that the new vertex has equal probability to land on any edge of $\cT(n-1)$. This holds true for the insertion into $\rT(n-1)$ as well, by construction. It then follows from the induction hypothesis that $(v(\cT(n)),\dgr)$ and $\rT(n)$ have the same law. We may and shall assume that $(v(\cT(n)),\dgr)=\rT(n)$.

Next, we show that the claim also holds for $n\in\N$ such that $\ell$ divides $n$, assuming that $(v(\cT(n-1)),\dgr)=\rT(n-1)$. After inserting a vertex into both $\cT(n-1)$ and $\rT(n-1)$ as above, we additionally attach a new branch to the last inserted vertex. The resulting trees are $\cT(n)$ and $\rT(n)$. It is easily seen that their laws are the same, and we may view them equal. The lemma then follows by induction.
\end{proof}

This lemma immediately yields \refP{prop:dist}. Recall that $\cT_k(n)$ is the subtree of $\cT(n)$ spanned by the root and the first $k$ leaves.

\begin{proof}[{\bf Proof of \refP{prop:dist}}]
By the constructions above, $\left(\cT_k:k\in\N\right)$ has the same distribution as $ \left((\cT_k((k-1)\ell),\dleb):k\in\N\right)=\left((\cT_k((k-1)\ell+1),\dleb):k\in\N\right)=\ldots$ We may and shall assume that 
\begin{equation}\label{eq:2}
(\cT_k(n),\dleb) = \cT_k 
\end{equation}
for all integers $k,n$ with $n\ge(k-1)\ell$. In the product of the probability spaces where (\ref{eq:1}) and (\ref{eq:2}) hold respectively, the proposition easily follows.
\end{proof}

\section{Almost sure convergence for subtrees with finite leaves}\label{sec:as}

Hereafter, we work in the probability space where the equalities of \refP{prop:dist} hold. In this section we prove \refP{prop:finite}, which requires the following lemmas.

\begin{lem}\label{lem:bd}
For all integer $k\ge (7/3)^4$, $\p{\left\vert C_k^{\ell+1} - k\right\vert \ge k^{3/4}}
\le 2e^{-k^{1/2}/4}$.
\end{lem}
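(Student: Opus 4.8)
The plan is to exploit Fact~\ref{fact:repr}, which tells us that $C_k^{\ell+1} \eqdist E_1 + \cdots + E_k$ where $E_1,\ldots,E_k$ are i.i.d.\ Exponential$(1)$-variables. So the statement is simply a standard concentration bound for a sum of $k$ i.i.d.\ exponentials around its mean $k$, at deviation scale $k^{3/4}$; the exact constants in the bound $2e^{-k^{1/2}/4}$ are what must be matched, but this is a routine Chernoff-type estimate.

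First I would write $S_k \coloneqq E_1 + \cdots + E_k$, so that $\p{|C_k^{\ell+1} - k|\ge k^{3/4}} = \p{|S_k - k|\ge k^{3/4}}$, and split into the upper and lower tails. For the upper tail, use the exponential moment $\e[e^{\lambda E_1}] = (1-\lambda)^{-1}$ valid for $\lambda\in(0,1)$, so that for $\lambda\in(0,1)$,
\[
\p{S_k - k \ge k^{3/4}} \le e^{-\lambda(k+k^{3/4})}(1-\lambda)^{-k}.
\]
Optimizing (or just choosing) $\lambda$ of order $k^{-1/4}$ — e.g.\ $\lambda = k^{-1/4}/(1+k^{-1/4})$, which makes $(1-\lambda)^{-1} = 1+k^{-1/4}$ — and using $\log(1+x)\ge x - x^2/2$ gives, after a short computation, a bound of the form $e^{-k^{1/2}/4}$ once $k$ is large enough (the hypothesis $k\ge(7/3)^4$ is presumably exactly the threshold that makes the higher-order correction terms in the Taylor expansion absorbable into the factor $1/4$ in the exponent). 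For the lower tail one does the symmetric computation with $\lambda<0$: $\p{S_k - k \le -k^{3/4}}\le e^{\lambda(k - k^{3/4})}(1-\lambda)^{-k}$ for $\lambda<0$, and again a choice of $|\lambda|$ of order $k^{-1/4}$ together with $\log(1-\lambda)\le -\lambda + \lambda^2$ (for $\lambda<0$) yields $e^{-k^{1/2}/4}$ in the same regime. Adding the two tails gives the factor $2$.

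The main (and only) obstacle is bookkeeping: one has to choose $\lambda$ carefully and control the sign and size of the quadratic remainder in the logarithm expansion so that the surviving exponent is at least $k^{1/2}/4$, and this is precisely where the explicit constraint $k\ge(7/3)^4$ enters — it is the smallest $k$ for which the cubic-and-higher corrections, the asymmetry between the two tails, and the loss from a non-optimal $\lambda$ all fit under the budget. I would carry out the upper-tail computation explicitly to pin down the constant, then remark that the lower tail is handled identically (indeed the lower tail of a sum of exponentials is lighter), and conclude by the union bound. No step requires anything beyond the elementary exponential moment of an Exponential$(1)$ random variable and Fact~\ref{fact:repr}.
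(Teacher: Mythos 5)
Your plan is exactly the paper's: apply a Chernoff bound to $E_1+\cdots+E_k$ with tilting parameter of order $k^{-1/4}$, then Taylor-estimate the resulting logarithm to extract the exponent $k^{1/2}/4$, with the threshold $k\ge(7/3)^4$ controlling the Taylor remainder. The one genuine slip is that both Taylor inequalities you cite point the wrong way. With your choice of $\lambda$, the upper tail reduces to bounding $e^{-k^{3/4}}(1+k^{-1/4})^k$ above by $e^{-k^{1/2}/4}$, i.e.\ to an \emph{upper} bound $\log(1+x)\le x - cx^2$ with $c\ge 1/4$ at $x=k^{-1/4}$; the inequality $\log(1+x)\ge x-x^2/2$ you invoke is a lower bound and does not close the step. (One fix: $\log(1+x)\le x-x^2/2+x^3/3\le x-x^2/4$ for $x\le 3/4$, comfortably satisfied once $k\ge(7/3)^4$.) Likewise, for the lower tail you need a \emph{lower} bound on $\log(1-\lambda)$, not the upper bound $\log(1-\lambda)\le -\lambda+\lambda^2$ you wrote. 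The paper keeps the directions straight by upper-bounding $(1\mp\veps)^{-k}$ via $-\log(1\mp\veps)\le \pm\veps+\tfrac{3}{4}\veps^2$ for $\veps\in(0,3/7]$, which is precisely where the constraint $\veps=k^{-1/4}\le 3/7$, i.e.\ $k\ge(7/3)^4$, enters. Once the directions are corrected, your sketch matches the paper's proof step for step.
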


\begin{cor}\label{cor:bd2}
Let $k:\N\to\N$ be such that $k(n)^\alpha = o(n^{\alpha-1/4})$ and $k(n)\to\infty$. Then for sufficiently large $n$, with probability greater than $ 1-2\sum\limits_{m=k(n)}^{\lfloor n/\ell\rfloor } e^{-m^{1/2}/4}$, we have $\left\vert C_{k(n)}-k(n)^{\frac{1}{\ell+1}}\right\vert < 10 k(n)^{\frac{1}{\ell+1}-\frac14}$ and $\left(\frac{n}{\ell}\right)^\alpha \left(\frac{1}{\alpha} - \frac{5}{n^{1/4}}\right) <\sum_{m=k(n)}^{\lfloor n/\ell \rfloor} \frac{1}{C_m} < \left(\frac{n}{\ell}\right)^\alpha \left(\frac{1}{\alpha}+ \frac{5}{n^{1/4}}\right)$.
\end{cor}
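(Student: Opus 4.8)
The plan is to convert the statement into a deterministic one via a union bound over \refL{lem:bd}, and then run elementary estimates on the resulting good event. Recall from \refFt{fact:repr} that $C_m^{\ell+1}\eqdist E_1+\cdots+E_m$, and write $1-\alpha=\tfrac1{\ell+1}$ throughout. First I would take $n$ large enough that $k(n)\ge(7/3)^4$ (possible since $k(n)\to\infty$) and apply \refL{lem:bd} with $k=m$ for each integer $m$ with $k(n)\le m\le\lfloor n/\ell\rfloor$; a union bound then shows that, with probability at least $1-2\sum_{m=k(n)}^{\lfloor n/\ell\rfloor}e^{-m^{1/2}/4}$, the event
\[
\cE_n:=\bigl\{\,|C_m^{\ell+1}-m|<m^{3/4}\ \text{for all}\ k(n)\le m\le\lfloor n/\ell\rfloor\,\bigr\}
\]
occurs, so it suffices to show that on $\cE_n$, for $n$ large, both displayed inequalities hold. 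On $\cE_n$ I would set $\theta_m:=C_m^{\ell+1}/m-1$, so $|\theta_m|<m^{-1/4}\le k(n)^{-1/4}=:\delta_n\to0$. The first inequality is then immediate: $C_{k(n)}=k(n)^{1-\alpha}(1+\theta_{k(n)})^{1-\alpha}$ and $|(1+x)^{1-\alpha}-1|\le 2|x|$ for $|x|\le\tfrac12$ (mean value theorem, using $1-\alpha\le1$), so $|C_{k(n)}-k(n)^{1-\alpha}|\le 2k(n)^{1-\alpha}\delta_n=2k(n)^{\frac1{\ell+1}-\frac14}<10\,k(n)^{\frac1{\ell+1}-\frac14}$.

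For the second inequality I would expand $\tfrac1{C_m}=m^{\alpha-1}(1+\theta_m)^{-(1-\alpha)}$ and, again by the mean value theorem, bound $|(1+x)^{-(1-\alpha)}-1|\le(1-\alpha)(1-\delta_n)^{-2}|x|$ for $|x|\le\delta_n$, so that with $1+\eta_n:=(1-\delta_n)^{-2}\to1$ one gets the per-term estimate $\bigl|\tfrac1{C_m}-m^{\alpha-1}\bigr|\le(1-\alpha)(1+\eta_n)\,m^{\alpha-5/4}$. Summing over $k(n)\le m\le\lfloor n/\ell\rfloor$ and comparing $\sum m^{\alpha-1}$ and $\sum m^{\alpha-5/4}$ to the corresponding integrals — valid since $x\mapsto x^{\alpha-1}$ and $x\mapsto x^{\alpha-5/4}$ are decreasing and $\alpha-\tfrac14=\tfrac{3\ell-1}{4(\ell+1)}>0$ — gives
\[
\sum_{m=k(n)}^{\lfloor n/\ell\rfloor}m^{\alpha-1}=\tfrac1\alpha\bigl(\tfrac n\ell\bigr)^{\alpha}+o\bigl(n^{\alpha-1/4}\bigr),\qquad \sum_{m=k(n)}^{\lfloor n/\ell\rfloor}m^{\alpha-5/4}\le\tfrac1{\alpha-1/4}\bigl(\tfrac n\ell\bigr)^{\alpha-1/4}+O(1),
\]
where the error term in the first identity absorbs $k(n)^\alpha=o(n^{\alpha-1/4})$ (hypothesis) and the rounding error $|(\lfloor n/\ell\rfloor)^\alpha-(n/\ell)^\alpha|=O(n^{\alpha-1})$. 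Combining these and using $(\tfrac n\ell)^{\alpha-1/4}=\ell^{1/4}n^{-1/4}(\tfrac n\ell)^\alpha$ yields
\[
\Bigl|\sum_{m=k(n)}^{\lfloor n/\ell\rfloor}\tfrac1{C_m}-\tfrac1\alpha\bigl(\tfrac n\ell\bigr)^\alpha\Bigr|\le\bigl(\tfrac n\ell\bigr)^\alpha\tfrac1{n^{1/4}}\Bigl(\tfrac{(1-\alpha)(1+\eta_n)\ell^{1/4}}{\alpha-1/4}+o(1)\Bigr),
\]
and since $\tfrac{(1-\alpha)\ell^{1/4}}{\alpha-1/4}=\tfrac{4\ell^{1/4}}{3\ell-1}$ is decreasing in $\ell\ge1$ with maximal value $2$ (attained at $\ell=1$), the parenthesised factor is $\le2+o(1)$, hence $<5$ for $n$ large — which is exactly the second displayed inequality.

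The argument is mostly bookkeeping, but two points need care, and I expect the second to be the real obstacle. First, one must verify that the error sum $\sum m^{\alpha-5/4}$ is of order $n^{\alpha-1/4}$ rather than something larger; this uses $\alpha\ge\tfrac12$, which gives $\alpha-\tfrac14>0$ so that $\int x^{\alpha-5/4}\,dx$ contributes a positive power of $n$. Second, and more delicate, one must retain the factor $1-\alpha=\tfrac1{\ell+1}$ coming out of the linearisation of $(1+\theta_m)^{-(1-\alpha)}$: dropping it leaves an $\ell$-dependent constant that eventually exceeds $5$, whereas keeping it gives the constant $\tfrac{4\ell^{1/4}}{3\ell-1}\le2$, with room to spare for the $o(1)$ and rounding errors. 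Finally, note that the hypothesis $k(n)^\alpha=o(n^{\alpha-1/4})$ is used only to discard the lower endpoint of the two sums, and $k(n)\to\infty$ is needed both to invoke \refL{lem:bd} and to ensure $\delta_n\to0$, which is what makes the two linearisations tight.
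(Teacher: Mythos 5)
Your proposal is correct and follows essentially the same route as the paper: a union bound over Lemma~\ref{lem:bd} for $k(n)\le m\le\lfloor n/\ell\rfloor$, a per-term linearisation of $1/C_m$ around $m^{-1/(\ell+1)}$ (the paper packages this as Corollary~\ref{cor:bd} with constant $\tfrac{2}{\ell+1}$, you get $\tfrac{1+\eta_n}{\ell+1}$ by the mean value theorem), and then sum-versus-integral comparison, with the hypothesis $k(n)^\alpha=o(n^{\alpha-1/4})$ used exactly as in the paper to discard the lower endpoint. The bookkeeping, including retaining the $\tfrac1{\ell+1}$ factor so the final coefficient stays below $5$ uniformly in $\ell$, checks out.
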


We defer the straightforward proofs of \refL{lem:bd} and \refC{cor:bd2} to \refS{sec:moment}.

For all $k\in\N$, let $\cB_k^+$ be the Borel $\sigma$-algebra of $\cT_k$ (i.e., of $(\cT_k(n),\dleb)$, in view of \refP{prop:dist}). Note that conditioned on $\cT_k$, we do {\em not} know any information of the embellished vertices. Given $S\in \cB_k^+$, write $|S|=\mu_k(S)\cdot C_k$; so when $S$ is a path, $|S|$ is the intrinsic path-length. For all integer $n\ge (k-1)\ell$, let $M(S,n)$ be the number of vertices of $\cT_k(n)$ on $S$, and write 
\[
\widehat{M}(S,n) = \frac{c}{n^\alpha}\cdot M(S,n).
\]

\begin{fact}
\label{fact:bin}
Fix $k\in\N$ and $S\in\cB_k^+$. Then for all integer $j\ge k$, given $\cT_k$ and $C_j$,
\[
M(S,j\ell) - M(S,(j-1)\ell) \sim \mathrm{Binomial}\left(\ell,\frac{|S|}{C_j}\right),
\]
and given $C_k,C_{k+1},\ldots$, the variables $\bklg{M(S,j\ell) - M(S,(j-1)\ell)}_{j\geq k}$ are
independent. 
\end{fact}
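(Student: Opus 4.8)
The plan is to exploit the coupling of \refP{prop:dist} together with the Dirichlet structure established in \refL{lem:frag}, and to peel off one level at a time, from level $j$ down to level $k$. Fix $k\in\N$ and $S\in\cB_k^+$. The starting point is the observation that, by construction of the embellished trees, passing from $\cT((j-1)\ell)$ to $\cT(j\ell)$ consists of $\ell$ successive insertions of embellished vertices (the insertions producing $\cT((j-1)\ell+1),\ldots,\cT((j-1)\ell+\ell-1)$ and then the inserted vertex on $\cT'_j$), each of which lands at a point chosen according to normalized Lebesgue measure on the current tree, \emph{followed} by the attachment of the $(j+1)$:th branch, which does not create any vertex on $\cT_k$. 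So $M(S,j\ell)-M(S,(j-1)\ell)$ is exactly the number of these $\ell$ new embellished vertices that land on $S$.

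First I would argue that each of the $\ell$ insertions at level $j$ puts its vertex on $S$ with probability $|S|/C_j$, conditionally on $\cT_k$ and $C_j$, and that the $\ell$ indicators are conditionally independent. For this, note that $S\subset\cT_k\subset\cT_k(j\ell-1)$ and, crucially, that conditioning on $\cT_k$ and $C_j$ carries \emph{no} information about the embellished vertices inside $\cT((j-1)\ell+i)$, so the location of the $(i+1)$:th new embellished vertex is uniform with respect to normalized Lebesgue measure on $\cT((j-1)\ell+i)$, a tree whose total length equals $C_j$ (all of $\cT_j,\cT_{j+1},\ldots$ having the same underlying length metric up to the point where the $(j+1)$:th branch is attached). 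Since the insertion is by Lebesgue measure and $S$ is a fixed Borel subset of total length $|S|$, each indicator is $\mathrm{Bernoulli}(|S|/C_j)$; the new vertex either lands in $S$ or not, and either way the conditional law of the Lebesgue-normalized tree is unchanged for the purpose of the next insertion — this is precisely the content of the last display of \refL{lem:dircond}, which says splitting a Dirichlet coordinate by an independent uniform keeps the vector Dirichlet, so the symmetry needed for the next step is preserved. Summing the $\ell$ conditionally i.i.d.\ Bernoulli$(|S|/C_j)$ indicators gives $\mathrm{Binomial}(\ell,|S|/C_j)$.

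Second, for the independence across levels $j$, I would condition on the entire sequence $C_k,C_{k+1},\ldots$ (equivalently, on $\cT_k$ and all the $C_j$) and observe that, given this data, the $\ell$ insertions at level $j$ use fresh uniform randomness independent of all insertions at other levels — the embellished construction draws a new independent $\mathrm{Uniform}(0,1)$ (via \refL{lem:dircond}) at every insertion step, and the only coupling among levels is through the lengths $C_j$, which we have conditioned on. Hence $\{M(S,j\ell)-M(S,(j-1)\ell)\}_{j\ge k}$ are conditionally independent given $C_k,C_{k+1},\ldots$.

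The main obstacle I anticipate is the bookkeeping around what exactly is ``known'' when we condition on $\cT_k$: one must be careful that conditioning on the real tree $\cT_k$ (with its length metric) genuinely reveals nothing about \emph{where} the embellished vertices sit along its branches, so that the conditional insertion law remains Lebesgue-uniform and the Bernoulli parameter is the deterministic quantity $|S|/C_j$ rather than something random. This is where \refL{lem:frag} and the symmetry statement in \refL{lem:dircond} do the real work: the Dirichlet$(1,\ldots,1)$ law of the rescaled edge-lengths is exchangeable, so forgetting the embellished vertices and retaining only $\cT_k$ leaves the positions of future insertions uniform. Once that point is made carefully, the binomial count and the cross-level independence both follow immediately, so the remainder of the argument is routine.
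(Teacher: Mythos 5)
The paper states this as a \emph{Fact} without supplying a proof, so there is no authorial argument to compare against; the question is simply whether your fill-in is sound, and it is. Your decomposition is correct: passing from $\cT((j-1)\ell)$ to $\cT(j\ell)$ consists of exactly $\ell$ insertions, each uniform with respect to normalized Lebesgue measure on the current embellished tree, whose underlying real tree throughout these $\ell$ steps is $\cT_j$ of total length $C_j$; the subsequent attachment of the $(j+1)$:th branch contributes no vertex to $\cT_k$. Since $S\subset\cT_k$ has Lebesgue length $|S|$ and the insertion probability $|S|/C_j$ is a deterministic function of $\cT_k$ and $C_j$ regardless of the unknown shape of $\cT_j$ and the positions of previously inserted embellished vertices, each of the $\ell$ indicators is conditionally $\mathrm{Bernoulli}(|S|/C_j)$, and their conditional independence across both $i$ (within level $j$) and across levels $j$ follows because the conditional law of each indicator, given the entire past and $C_k,C_{k+1},\ldots$, is that same constant.

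One stylistic remark: the detour through Lemmas~\ref{lem:dircond} and~\ref{lem:frag} is heavier machinery than the argument needs. The construction already stipulates that each embellished vertex is inserted uniformly for normalized Lebesgue measure, a measure unaffected by where the previous embellished vertices sit, so the ``symmetry'' you want is built in by fiat; the Dirichlet structure is really there to serve the coupling with the combinatorial trees in Proposition~\ref{prop:dist}, not this binomial count. Likewise your ``main obstacle'' paragraph resolves itself directly: conditioning on $\cT_k$ reveals nothing about future insertion locations because those use fresh independent randomness, and the Bernoulli parameter $|S|/C_j$ is already measurable with respect to the conditioning $\sigma$-algebra. With that simplification noted, the proof is complete and correct.
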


We first define a nice event, then prove an exponential bound given such an event. For all integers $k,n$ with $n\ge k\ell$, define the event $F_{k,n}$ as
\begin{equation}\label{eq:Fkn}
F_{k,n}:=\left\{\sum_{m=k}^{\lfloor n/\ell \rfloor} \frac{1}{C_m} \in \left( \left(\frac{n}{\ell}\right)^\alpha\cdot \left(\frac{1}{\alpha}\pm \frac{5}{n^{1/4}}\right)\right)\right\}
\bigcap \left\{ \left\vert C_{k}-k^{\frac{1}{\ell+1}}\right\vert < 10 k^{\frac{1}{\ell+1}-\frac14}\right\}.
\end{equation}
Given $k:\N\to\N$ such that $k(n)\to\infty$ and $k(n) = o(n^{1/2})$, by \refC{cor:bd2}, with sufficiently large $n$,
\begin{equation}\label{in:F}
\p{F_{k(n),n}} > 1 - 2 \sum_{m=k(n)}^{\lfloor n/\ell\rfloor} e^{-m^{1/2}/4}.
\end{equation}
Given an event $F$, the notation $F^c$ denotes the complement of $F$.

The next result is the key to the results of this section, which says that the rescaled number of vertices falling into a subset~$S$ of the tree has the same asymptotics as~$\abs{S}$.

\begin{lem}\label{lem:convas}
Let $k:\N\to\N$ be such that $k(n) = \Omega((\log n)^{10})$ and $k(n) = o(n^{1/2})$. Then for sufficiently large $n$, for $\veps = \veps_n > 80 \alpha k(n)^{\frac{1}{\ell+1}} n^{-1/4}$, and for all $S\in \cB_{k(n)}^+$,
\begin{equation}
\label{in:convas0}
\p{\left\vert \widehat{M}(S,n) - |S| \right\vert \ge \veps,~F_{k(n),n}~\bigg\vert~\cT_{k(n)}} \le 2 \exp\left( - \frac{\veps^2 n^\alpha }{32c k(n)^{\frac{1}{\ell+1}}}\right);
\end{equation}
it follows that
\begin{equation}\label{in:convas}
\E{\p{\left\vert \widehat{M}(S,n) - |S| \right\vert \ge \veps~\bigg\vert~\cT_{k(n)}} }\le 2 \exp\left( - \frac{\veps^2 n^\alpha }{32c k(n)^{\frac{1}{\ell+1}}}\right) + e^{-k(n)^{1/3}},
\end{equation}
where the second term $e^{-k(n)^{1/3}}$ comes from $\E{\p{F_{k(n),n}^c\Big\vert \cT_{k(n)}}}$, not depending on $\veps$.
\end{lem}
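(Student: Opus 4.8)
\textbf{Proof proposal for Lemma \ref{lem:convas}.}

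The plan is to fix $k=k(n)$, work conditionally on $\cT_k$, and exploit the conditional independence structure from \refFt{fact:bin}. First I would write the number of vertices on $S$ as a telescoping sum over the "rounds" indexed by $m$ from $k$ to $\lfloor n/\ell\rfloor$. For each $m$ in this range, conditionally on $\cT_k$ and on $C_m$, the increment $M(S,m\ell)-M(S,(m-1)\ell)$ is $\mathrm{Binomial}(\ell, |S|/C_m)$, and these increments are conditionally independent given $(C_m)_{m\ge k}$. Hence $\E{M(S,n)\mid \cT_k,(C_m)}$ equals (up to boundary terms coming from the $n$ that is not a multiple of $\ell$) $\ell\,|S|\sum_{m=k}^{\lfloor n/\ell\rfloor} 1/C_m$. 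On the event $F_{k,n}$, the partial-sum factor is $(n/\ell)^\alpha(1/\alpha\pm 5 n^{-1/4})$, so $\ell$ times it is $\ell^{1-\alpha}n^\alpha(1/\alpha\pm 5 n^{-1/4}) = \ell^{1-\alpha}\alpha^{-1}n^\alpha(1\pm 5\alpha n^{-1/4})$. Multiplying by $c/n^\alpha = \ell^\alpha\alpha^{-1}n^{-\alpha}\cdot(\text{...})$ — more precisely using $c=\ell^\alpha/(\ell+1)$ and $1/\alpha = (\ell+1)/\ell$ so that $c\ell^{1-\alpha}/\alpha = \ell^\alpha \ell^{1-\alpha}/(\ell\cdot) = 1$ — the leading constant is exactly $1$ (this is the point of the choice of $c$ flagged in the introduction after \eq{exponent}), so $\E{\widehat M(S,n)\mid \cT_k,(C_m)}$ is within $|S|\cdot O(n^{-1/4})$ of $|S|$ on $F_{k,n}$; with $|S|\le C_k$ and $C_k \le 2 k^{1/(\ell+1)}$ on $F_{k,n}$, this bias is at most, say, $40\alpha k^{1/(\ell+1)}n^{-1/4} < \veps/2$ by the hypothesis $\veps > 80\alpha k^{1/(\ell+1)}n^{-1/4}$. (Also $|S|\le C_k$ bounds the boundary term $M(S,n)-M(S,\ell\lfloor n/\ell\rfloor)\le\ell$, which after rescaling is $o(1)$ and absorbed.)

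Next I would apply a Chernoff/Hoeffding bound to the centered sum. Conditionally on $\cT_k$ and all of $(C_m)_{m\ge k}$, $\widehat M(S,n)$ is $c/n^\alpha$ times a sum of independent random variables each bounded in $[0,\ell]$, and one computes the conditional variance as $c^2 n^{-2\alpha}\sum_m \ell \frac{|S|}{C_m}(1-\frac{|S|}{C_m}) \le c^2 n^{-2\alpha}\,\ell\,|S|\sum_m 1/C_m$, which on $F_{k,n}$ is at most a constant times $n^{-\alpha}|S| \le n^{-\alpha}C_k \le 2 n^{-\alpha}k^{1/(\ell+1)}$. A Bernstein inequality then gives, for the deviation of $\widehat M(S,n)$ from its conditional mean exceeding $\veps/2$, a bound of the form $2\exp(-c' \veps^2 n^\alpha/ k^{1/(\ell+1)})$; tracking the constants (Hoeffding with range $c\ell/n^\alpha$ per summand over $\le n/\ell$ summands, or Bernstein with the variance proxy above) yields the stated constant $32c$ in the denominator. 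Combining the bias bound $\le\veps/2$ on $F_{k,n}$ with this concentration bound — and noting that the concentration bound holds conditionally on $(C_m)$ and hence, after taking conditional expectation over $(C_m)$ given $\cT_k$ on the event $F_{k,n}$ which is $(C_m)$-measurable, also in the averaged form — gives \eq{in:convas0}.

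For \eq{in:convas}, I would just decompose on $F_{k,n}$ and its complement: $\p{|\widehat M(S,n)-|S||\ge\veps\mid\cT_k} \le \p{|\widehat M(S,n)-|S||\ge\veps, F_{k,n}\mid\cT_k} + \p{F_{k,n}^c\mid\cT_k}$, take $\E{\cdot}$, use \eq{in:convas0} for the first term, and use \eq{in:F} together with $\sum_{m\ge k(n)} e^{-m^{1/2}/4}\le e^{-k(n)^{1/3}}$ for large $n$ (valid since $k(n)=\Omega((\log n)^{10})$ makes $k(n)^{1/2}$ dominate, and the geometric-type tail is summable to something smaller than $e^{-k(n)^{1/3}}$) for the second. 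The main obstacle I anticipate is purely bookkeeping rather than conceptual: getting the leading constant in the mean to cancel exactly to $1$ via the definitions of $c$ and $\alpha$, and then propagating the remaining $O(n^{-1/4})$ error and the $|S|\le C_k\le 2k^{1/(\ell+1)}$ bound carefully enough that the bias is genuinely below $\veps/2$ for the given threshold on $\veps$, and that the Hoeffding/Bernstein constant comes out as the claimed $32c$. A secondary subtlety is handling the $n\not\equiv 0\pmod\ell$ boundary increment and confirming it contributes only $o(1)$ after rescaling, so it can be swept into the $\veps/2$ slack.
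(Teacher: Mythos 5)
Your approach is closely aligned with the paper's but organizes the bound differently, and the comparison is worth spelling out. Both proofs condition on $(C_m)_{m\ge k}$, use the conditional independence of the $\mathrm{Binomial}(\ell,|S|/C_m)$ increments from \refFt{fact:bin}, work on the event $F_{k,n}$ so that $\sum_m 1/C_m$ is controlled and $C_k\le 2k^{1/(\ell+1)}$, and both deduce~\eqref{in:convas} from~\eqref{in:convas0} by adding $\E{\p{F_{k(n),n}^c\mid\cT_{k(n)}}}\le e^{-k(n)^{1/3}}$. The difference is in how the Chernoff bound is carried out: the paper applies Markov to $e^{tM}$ directly against the target $\frac{n^\alpha}{c}(|S|+\veps)$, computes the exponent~\eqref{exponent} in which the leading $\ell t|S|(n/\ell)^\alpha/\alpha - tn^\alpha|S|/c$ cancels exactly (this is the purpose of the chosen $c$), and then optimizes $t$ with a two-case argument ($t<1/8$ versus $t=1/8$). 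You instead split the deviation into a \emph{bias} term (distance of the conditional mean of $\widehat M(S,n)$ from $|S|$ on $F_{k,n}$) plus a \emph{fluctuation} term (deviation from the conditional mean), show the bias is below $\veps/2$, and then invoke a standard Bernstein/Hoeffding bound for the fluctuation. These are equivalent in spirit; your version is more modular, while the paper's $t$-optimization keeps all terms in one exponent.

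Two places where your bookkeeping has gaps. First, you never account for the starting count $M_0 = M(S,(k-1)\ell)$, which is a random quantity bounded only by $(\ell+1)k$ and is \emph{not} $\cT_k$-measurable (it counts embellished vertices). You address the terminal boundary when $\ell\nmid n$, but the telescoping sum $\sum_{m=k}^{\lfloor n/\ell\rfloor}(M(S,m\ell)-M(S,(m-1)\ell))$ only gives $M(S,\ell\lfloor n/\ell\rfloor)-M_0$, so after rescaling there is an additional bias contribution $c(\ell+1)k/n^\alpha$. This term is dominated by $\veps$ under $k(n)=o(n^{1/2})$ and $\alpha\ge 1/2$, but it must be included in the budget — the paper carries it explicitly as $(\ell+1)tk$ in~\eqref{exponent} and checks $(\ell+1)kc < n^\alpha\veps/8$. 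Second, the Bernstein inequality you invoke has a linear-tail regime: when $\veps$ exceeds roughly $\sigma^2/(\text{max increment})\approx k^{1/(\ell+1)}/\ell$, the exponent is linear rather than quadratic in $\veps$, and confirming this still yields $\exp(-\veps^2 n^\alpha/(32ck^{1/(\ell+1)}))$ requires a separate case analysis using $|S|\le C_k$. The paper handles exactly this via its $t=1/8$ case with the observation $|S|<4\veps$; your claim that "tracking the constants yields the stated $32c$" glosses over that case. Neither issue is fatal to the strategy, but both need to be filled in before the bound~\eqref{in:convas0} is actually established with the stated constant.
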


\begin{proof}
Let $n\in\N$ be sufficiently large (to be made precise) and write $k=k(n)$, $\veps=\veps_n$. Conditioned on $\cT_k$, fix $S\in\cB_{k}^+$. To ease notation, assume that $\ell$ divides $n$. Write $M=M(S,n)$. By Markov's inequality, for any $t>0$,
\begin{equation}\label{in:cher1}
\p{M \ge \frac{n^\alpha}{c} (|S|+\veps),~F_{k,n}~\bigg\vert~ \cT_k}
\le \E{e^{t M}\cdot \I{F_{k,n}}~\bigg\vert ~\cT_k}\cdot  e^{-\frac{t n^\alpha}{c}  (|S|+\veps)}.
\end{equation}
Now we take a closer look at the bound (\ref{in:cher1}). Write $M_0= M(S,(k-1)\ell)$, and for integer $k\le m\le n/\ell$, write $B_m = M(S,m\ell)-M(S,(m-1)\ell)$; so given $C_k,\ldots,C_{n/\ell}$, Fact~\ref{fact:bin} implies the $B_m$'s are independent 
Binomial$(\ell,\frac{|S|}{C_m})$-variables. Then $M = M_0 + \sum_{m=k}^{n/\ell} B_m$. For $t\in(0,1]$, $e^t -1-t\le t^2$ and $\log\left\{1+\frac{|S|}{C_m} (e^t - 1)\right\}\le \frac{|S|}{C_m} (e^t-1)$, so
\begin{align*}
\E{e^{tM} \cdot \I{F_{k,n}}~ \bigg\vert~ \cT_k,M_0,C_k,\ldots,C_{n/\ell}} =&~ \I{F_{k,n}}\cdot e^{tM_0}\cdot \prod_{m=k}^{n/\ell} \E{e^{tB_m}~\bigg\vert ~\cT_k,C_k,\ldots,C_{n/\ell}}\\
=&~ \I{F_{k,n}}\cdot e^{tM_0}\cdot e^{\ell\sum_{m=k}^{n/\ell} \log \left\{1 + \frac{|S|}{C_m} (e^t-1)\right\}}\\
\le&~ \I{F_{k,n}}\cdot e^{tM_0}\cdot e^{\ell(t+t^2)\sum_{m=k}^{n/\ell} \frac{|S|}{C_m}} .
\end{align*}
Note that $M_0\le (\ell+1)k$ and $\I{F_{k,n}}\le1$. Together with (\ref{eq:Fkn}) and (\ref{in:cher1}), by averaging over $C_k,\ldots,C_{n/\ell}$, we have
\begin{align}
&\p{M\ge \frac{n^\alpha}{c} (|S|+\veps),~F_{k,n}~\bigg\vert~\cT_k}
\le \I{F_{k,n}}\cdot e^{(\ell+1)tk + \ell(t+t^2)|S|\left(\frac{n}{\ell}\right)^\alpha \left(\frac1\alpha+\frac{5}{n^{1/4}}\right) - \frac{tn^\alpha}{c} (|S|+\veps)}.\label{in:cher3}
\end{align}
 Recall that $c= \frac{\ell^\alpha}{\ell+1}$ and $\alpha= \frac{\ell}{\ell+1}$, so $\ell t |S| \left(\frac{n}{\ell}\right)^\alpha \frac{1}{\alpha} - \frac{tn^\alpha}{c}|S| = 0$. Hence, by rearrangement and cancellation, the exponent of (\ref{in:cher3}) is simplified as follows:
\begin{align}
&(\ell+1)tk + \ell(t+t^2)|S|\left(\frac{n}{\ell}\right)^\alpha \left(\frac1\alpha+\frac{5}{n^{1/4}}\right) - \frac{tn^\alpha}{c} (|S|+\veps)\label{exponent}\\
=&~ (\ell+1)tk +  (t+t^2) n^{\alpha -1/4}\cdot \frac{5|S|\alpha}{c}+  t^2 n^\alpha \cdot \frac{|S|}{c} -tn^\alpha\cdot  \frac{ \veps}{c}\notag.
\end{align}

Furthermore, since $|S|\le C_k$, $k=k(n)=o(n^{1/2})$, and $\veps = \veps_n > 80 \alpha k(n)^{\frac{1}{\ell+1}} n^{-1/4}$, there exists $n_1\in\N$ such that for all $n\ge n_1$, $(\ell+1)k\cdot c< n^\alpha\cdot \frac{ \veps}{8}$, and, on the event $F_{k,n}$, $n^{\alpha-1/4}\cdot 5|S|\alpha< n^{\alpha -1/4}\cdot 10k^{\frac{1}{\ell+1}}\alpha < n^\alpha\cdot \frac{\veps}{8}$; so $ n^\alpha\cdot \frac{\veps}{2 }- k\cdot c - n^{\alpha - 1/4}\cdot 5\alpha |S|>  n^\alpha\cdot \frac{\veps}{4}$. Below we assume $\cT_k$ is given and $F_{k,n}$ holds. Assume $n\ge n_1$, and take
\begin{equation*}
t = \min\left\{\frac{1}{8},~\frac{ n^\alpha\cdot \veps/2 - k\cdot c - n^{\alpha-1/4} \cdot 5 \alpha|S|}{n^\alpha \cdot |S|+ n^{\alpha -1/4}\cdot 5\alpha|S|}\right\}.
\end{equation*}
It follows that $0<t\le \frac18$. We first consider the case $\frac18>t=\frac{ n^\alpha\cdot \veps/2 - k\cdot c - n^{\alpha-1/4} \cdot 5 \alpha|S|}{n^\alpha \cdot |S|+ n^{\alpha -1/4}\cdot 5\alpha|S|}$: the last equality immediately yields $(\ell+1)tk + (t^2+t)n^{\alpha-1/4}\cdot \frac{5|S|\alpha}{c} + t^2 n^\alpha \cdot \frac{|S|}{c} = t n^\alpha \cdot \frac{\veps}{2c}$, so we easily obtain that
\begin{equation}\label{eq:case1}
(\ell+1)tk +  (t+t^2) n^{\alpha -1/4}\cdot \frac{5|S|\alpha}{c}+  t^2 n^\alpha \cdot \frac{|S|}{c} -tn^\alpha\cdot  \frac{ \veps}{c}
=- t n^\alpha \cdot \frac{\veps}{2c}.
\end{equation}
Moreover, given that $\frac18>t=\frac{ n^\alpha\cdot \veps/2 - k\cdot c - n^{\alpha-1/4} \cdot 5 \alpha|S|}{n^\alpha \cdot |S|+ n^{\alpha -1/4}\cdot 5\alpha|S|} > \frac{n^\alpha\cdot \veps/4}{2n^\alpha\cdot |S|}$, by increasing $n$ if necessary, we have $t> \frac{n^\alpha \cdot \veps/4}{2n^\alpha \cdot |S|} \ge \frac{\veps}{8C_k}>  \frac{\veps}{16 k^{\frac{1}{\ell+1}}}$. So (\ref{eq:case1}) is upper bounded by $-\frac{\veps^2 n^\alpha}{32c k^{\frac{1}{\ell+1}}}$. Next consider the case $t=\frac18\le \frac{ n^\alpha\cdot \veps/2 - k\cdot c - n^{\alpha-1/4} \cdot 5 \alpha|S|}{n^\alpha \cdot |S|+ n^{\alpha -1/4}\cdot 5\alpha|S|} < \frac{\veps}{2|S|}$: in this case $|S|<4\veps$ and, by substituting $t=\frac18$ and using the inequality $(\ell+1)k\cdot c< n^\alpha \cdot \frac{\veps}{8}$ again,
\begin{align*}
&(\ell+1)tk +  (t+t^2) n^{\alpha -1/4}\cdot \frac{5|S|\alpha}{c}+  t^2 n^\alpha \cdot \frac{|S|}{c} -tn^\alpha\cdot  \frac{ \veps}{c}\\
\le&~ n^\alpha \cdot \frac{\veps}{64c} + n^{\alpha-1/4}\cdot  \frac{9}{64}\cdot \frac{20\veps\alpha}{c} + n^\alpha\cdot \frac{4\veps}{64c} - n^\alpha\cdot \frac{\veps}{8c}\\
=&~ -n^\alpha\cdot \frac{3\veps}{64c} + n^{\alpha-1/4}\cdot \frac{45\veps \alpha}{16c} < -\frac{\veps^2 n^\alpha}{32c k^{\frac{1}{\ell+1}}}.
\end{align*}
Together with (\ref{in:cher3}), for sufficiently large $n$ we have
\begin{align*}
&\p{M\ge \frac{n^\alpha}{c}(|S|+\veps),~F_{k,n}~\bigg\vert~\cT_k}\le \exp\left( - \frac{\veps^2 n^\alpha }{32c k^{\frac{1}{\ell+1}}}\right).
\end{align*}
Similarly, we deduce that $\p{ \frac{n^\alpha}{c}|S| - M\ge \frac{n^\alpha}{c}\veps,~F_{k,n}~\bigg\vert~\cT_k} \le \exp\left( - \frac{\veps^2 n^\alpha }{32c k^{\frac{1}{\ell+1}}}\right)$. (\ref{in:convas0}) then follows by the triangle inequality. 

Finally, by averaging over all $\cT_k$, it follows from (\ref{in:F}) that, by increasing $n$ if necessary,
\[
\E{\p{F_{k,n}^c\Big\vert \cT_k}} = \p{F_{k,n}^c} \le 2\sum_{m=k}^{n/\ell} e^{-m^{1/2}/4} \le e^{-k^{1/3}},
\]
where the last inequality is because $k=k(n) = \Omega((\log n)^{10})$. Then it follows by (\ref{in:convas0}) that
\begin{align*}
\E{\p{\left\vert \widehat{M}(S,n) - |S| \right\vert \ge \veps~\bigg\vert~\cT_{k}} } \le&~ \E{\p{\left\vert \widehat{M}(S,n) - |S| \right\vert \ge \veps,~F_{k,n}~\bigg\vert~\cT_{k} }}
+ \E{\p{F_{k,n}^c\Big\vert \cT_k}}\\
\le&~2\exp\left( - \frac{\veps^2 n^\alpha }{32c k^{\frac{1}{\ell+1}}}\right) + e^{-k^{1/3}}.
\end{align*}
Notice that the second term $e^{-k^{1/3}}$ in the bound does not depend on $\veps$.
\end{proof}

\refL{lem:convas} easily leads to the Gromov-Hausdorff (GH) version of \refP{prop:finite}; see (\ref{in:distortion}) for argument. To extend the result to GHP convergence (see \refS{sec:gh}),
we need to consider the measures on the trees. First we simplify notation; given appropriate $k:\N\to\N$, write 
\[
\cT^n = \cT_{k(n)}(n).
\]
We need to bound the minimal discrepancy with respect to the uniform probability measures $\nu_{k(n),n}$ on $(v(\cT^n),\rd_n)$ and $\mu_{k(n)}$ on $(\cT^n,\dleb)$. To accomplish that, we show that $\nu_{k(n),n}$ and $\mu_{k(n)}$ are close. 
 
\begin{cor}\label{cor:embellish0}
Fix $\veps>0$. Let $k:\N\to\N$ be such that $k(n)=\Omega\left((\log n)^{10}\right)$ and $k(n)=o(n^{1/10})$. Then for all $S\in \cB_{k(n)}^+$, as $n\to\infty$,
\[
\E{\p{\left\vert \nu_{k(n),n}(v(S)) - \mu_{k(n)}(S)\right\vert>\frac{\veps}{11k(n)^{\frac{2}{\ell+1}}}~\bigg\vert~\cT_{k(n)}}} = o(n^{-3}),
\]
where the rate of decay does not depend on $S$.
\end{cor}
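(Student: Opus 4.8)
The plan is to express both $\nu_{k(n),n}(v(S))$ and $\mu_{k(n)}(S)$ as ratios and apply \refL{lem:convas} to the numerator and to the denominator separately. Write $k=k(n)$. Since $v(\cT_k(n))=v(\rT_k(n))$ carries $M(\cT_k,n)$ vertices, of which exactly $M(S,n)$ lie in $S$, and $\mu_k(S)=|S|/C_k$, I would begin from the identity
\[
\nu_{k,n}(v(S))-\mu_k(S)=\frac{\widehat M(S,n)}{\widehat M(\cT_k,n)}-\frac{|S|}{C_k}
=\frac{\bklr{\widehat M(S,n)-|S|}\,C_k-|S|\,\bklr{\widehat M(\cT_k,n)-C_k}}{\widehat M(\cT_k,n)\,C_k},
\]
applying \refL{lem:convas} to the given $S$ and, with $S=\cT_k\in\cB_k^+$, to the denominator. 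Fix the scale $\veps_n:=\veps/(44\,k^{1/(\ell+1)})$; since $k=o(n^{1/10})$ one has $k^{2/(\ell+1)}=o(n^{1/4})$ for every $\ell\ge1$, so $\veps_n>80\alpha k^{1/(\ell+1)}n^{-1/4}$ for all large $n$ and \refL{lem:convas} is applicable with this $\veps_n$.

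On the event $F_{k,n}\cap\{\babs{\widehat M(S,n)-|S|}<\veps_n\}\cap\{\babs{\widehat M(\cT_k,n)-C_k}<\veps_n\}$, I would use $|S|\le C_k$ together with the lower bound $\widehat M(\cT_k,n)>C_k-\veps_n>\tfrac12 k^{1/(\ell+1)}$ (valid for large $n$ on $F_{k,n}$, since there $C_k>k^{1/(\ell+1)}(1-10k^{-1/4})$ while $\veps_n\to0$) in the identity above to conclude
\[
\babs{\nu_{k,n}(v(S))-\mu_k(S)}\;\le\;\frac{2\veps_n}{\widehat M(\cT_k,n)}\;\le\;\frac{4\veps_n}{k^{1/(\ell+1)}}\;=\;\frac{\veps}{11\,k^{2/(\ell+1)}}.
\]
Hence $\{\babs{\nu_{k,n}(v(S))-\mu_k(S)}>\veps/(11k^{2/(\ell+1)})\}$ lies inside $F_{k,n}^c$ together with the two ``concentration-failure'' events intersected with $F_{k,n}$, and a union bound --- estimating the latter two by \eqref{in:convas0} and using $\E{\p{F_{k,n}^c\mid\cT_k}}=\p{F_{k,n}^c}\le e^{-k^{1/3}}$ from \eqref{in:F} (exactly as at the end of the proof of \refL{lem:convas}, where $k=\Omega((\log n)^{10})$ is used) --- gives
\[
\E{\p{\babs{\nu_{k,n}(v(S))-\mu_k(S)}>\tfrac{\veps}{11k^{2/(\ell+1)}}\;\Big|\;\cT_k}}\;\le\;4\exp\bbklr{-\frac{\veps_n^2 n^\alpha}{32 c\,k^{1/(\ell+1)}}}+e^{-k^{1/3}},
\]
a bound that does not depend on $S$ (only $|S|\le C_k$ entered).

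Finally I would verify that both terms are $o(n^{-3})$. With $\veps_n=\veps/(44k^{1/(\ell+1)})$ the exponent in the first term is a constant multiple of $n^\alpha k^{-3/(\ell+1)}$, which --- since $k=o(n^{1/10})$ --- is $\Omega\bklr{n^{(10\ell-3)/(10(\ell+1))}}$, a positive power of $n$ for every $\ell\ge1$, and hence dominates $3\log n$; for the second term, $k=\Omega((\log n)^{10})$ forces $k^{1/3}=\Omega((\log n)^{10/3})$, which likewise dominates $3\log n$. So the displayed bound is $o(n^{-3})$, uniformly in $S$, which is the assertion. The one genuinely delicate point is the two-sided control of the denominator $\widehat M(\cT_k,n)$: it is what compels the restriction to $F_{k,n}$ (needed to pin down $C_k\asymp k^{1/(\ell+1)}$), and it costs the extra factor $k^{1/(\ell+1)}$ relative to the estimate in \refL{lem:convas}, hence the $k^{2/(\ell+1)}$ in the statement; calibrating $\veps_n$ so that \refL{lem:convas} stays non-vacuous while still delivering accuracy $\veps/(11k^{2/(\ell+1)})$ is exactly where the hypothesis $k(n)=o(n^{1/10})$ is spent.
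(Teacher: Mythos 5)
Your proposal is correct and takes essentially the same route as the paper: both arguments apply \refL{lem:convas} twice (once to $S$ and once to the whole tree, i.e.\ to the total vertex count $\widehat{M}(\cT_{k},n)$ versus $C_k$), restrict to the event $F_{k,n}$, and conclude from the resulting exponential bound together with $\p{F_{k,n}^c}\le e^{-k^{1/3}}$, using $k(n)=\Omega((\log n)^{10})$ and $k(n)=o(n^{1/10})$ exactly as you do. The only difference is algebraic bookkeeping: the paper clears denominators and uses just $C_k>1$ on $F_{k,n}$, so it invokes \refL{lem:convas} at accuracy $\veps/(22k^{2/(\ell+1)})$ (exponent $\asymp n^\alpha k^{-5/(\ell+1)}$), whereas your ratio identity with the lower bound $\widehat{M}(\cT_{k},n)\gtrsim k^{1/(\ell+1)}$ only needs accuracy of order $\veps k^{-1/(\ell+1)}$ and yields the marginally stronger exponent $\asymp n^\alpha k^{-3/(\ell+1)}$; both are $o(n^{-3})$ under the stated hypotheses.
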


\begin{proof}
Fix sufficiently large $n\in\N$ and write $k=k(n)$. Let $S\in \cB_{k}^+$. Recall that $|S| = \mu_{k}(S)\cdot C_{k}$, $|\nu_{k,n}(v(S))|\le 1$, and 
\[
\widehat{M}(S,n) =\frac{c}{n^\alpha} \cdot M(S,n) = \frac{c}{n^\alpha} \cdot \nu_{k,n}(v(S))\cdot |v(\cT^n)|.
\]
Next, note that, conditioned on $\cT_k$, the event $\left\{\left\vert \nu_{k,n}(v(S)) - \mu_{k}(S) \right\vert >2\veps\right\}$ is a subset of the union of the events
\begin{align*}
&\left\{\left\vert \nu_{k,n}(v(S))\cdot \frac{c\cdot |v(\cT^n)|}{n^\alpha}  - \mu_{k}(S) \cdot C_{k}\right\vert >\veps\cdot C_{k}\right\} \bigcup
\left\{\left\vert \frac{c\cdot |v(\cT^n)|}{n^\alpha}  -  C_{k}\right\vert \cdot \nu_{k,n}(v(S)) >\veps\cdot C_{k} \right\}.
\end{align*}
On the event $F_{k,n}$ from (\ref{eq:Fkn}), $C_k>1$. It then follows from the triangle inequality that
 \begin{align*}
& \E{\p{\left\vert \nu_{k,n}(v(S)) - \mu_{k}(S) \right\vert >2\veps~\bigg\vert~\cT_{k}}}\\
&\qquad \le ~ \E{\p{\left\vert \widehat{M}(S,n) - |S| \right\vert > \veps,~F_{k,n}~\bigg\vert~\cT_{k}}} \\
&\qquad\instep + \E{ \p{\left\vert \frac{c\cdot |v(\cT^n)|}{n^\alpha}  - C_{k}\right\vert >\veps,~F_{k,n}~\bigg\vert~\cT_{k}}} + 2\E{\p{F_{k,n}^c\Big\vert\cT_k}}\\
&\qquad \le~4 \exp\left( - \frac{\veps^2 n^\alpha }{32c k^{\frac{1}{\ell+1}}}\right)+ 2e^{-k^{1/3}} ;
 \end{align*}
the last inequality follows by applying \refL{lem:convas} twice. Now, replacing $\veps$ by $\frac{\veps}{2\cdot 11k^{\frac{2}{\ell+1}}}$ in the above inequality and noticing that given the event $F_{k,n}$, $\frac{\veps}{11k^{\frac{2}{\ell+1}}}$ satisfies the assumption in \refL{lem:convas}, we obtain
\[
\E{\p{\left\vert \nu_{k,n}(v(S)) - \mu_{k}(S) \right\vert >\frac{\veps}{11k^{\frac{2}{\ell+1}}}~\bigg\vert~\cT_{k}}}
\le 4 \exp\left(\frac{-\veps^2n^\alpha}{32\cdot 2^2\cdot 11^2 \cdot c k^{\frac{5}{\ell+1}}} \right)+2 e^{-k^{1/3}}.
\]
 The lemma then follows from that $k=k(n)=\Omega((\log n)^{10})$ and $k(n) =o(n^{1/10})$.
\end{proof}

It may be helpful to recall the definitions relating to GHP convergence in \refS{sec:gh}  before reading the next proof.

\begin{proof}[{\bf Proof of \refP{prop:finite}}]
For most part of the proof we fix a large enough $n$ and write $k=k(n)$ for simplicity, unless we consider varying $n$. Let $\veps_n = k^{-\frac{1}{\ell+1}}$. Since the total length of $\cT^n$ is $C_{k}$, we may cover $\cT^n$ by $M_n\coloneqq \left\lceil \veps_n^{-1}C_k\right\rceil$ balls, denoted by $B_{n,1},\ldots,B_{n,M_n}$, with diameter at most $\veps_n$. Let $A_{n,1} = B_{n,1}$, and for $i>1$, let $A_{n,i} = B_{n,i} \setminus A_{n,i-1}$. Then $\{A_{n,1},\ldots,A_{n,M_n}\}$ is a covering of $(\cT^n,\dleb)$ by disjoint sets with diameter at most $\eps_n$. 

Next, define  $S_n = \bigcup_{i=1}^{M_n} v(A_{n,i})\times A_{n,i}$, then $S_n$ is a correspondence between $v(\cT^n)$ and $\cT^n$. Moreover, for each $1\le i\le M_n$, let $w_i$ be the element of $v(A_{n,i})$ such that $w_i$ is closest to the root of $\cT^n$. The distortion of $S_n$ can be bounded as follows:
\begin{align*}
\dis&_n \coloneqq~ \dis(S_n;\rd_n,\dleb) \\
=&~ \sup\left\{|\rd_n(x,y) - \dleb(x',y')|: (x,x')\in S_n, (y,y')\in S_n\right\}\\
=&~ \max_{1\le i\le j\le M_n}
\sup\left\{|\rd_n(x,y) - \dleb(x',y')|: (x,x') \in v(A_{n,i})\times A_{n,i} , (y,y') \in v(A_{n,j}) \times A_{n,j} \right\}\\
\le&~ \max_{1\le i\le j\le M_n}
\sup\big\{ | \rd_n(w_i,w_j) - \dleb(w_i,w_j)| + \rd_n(w_i,x) + \rd_n(w_j,y)\\
&\instep + ~\dleb(w_i,x') + \dleb(w_j,y'): (x,x') \in v(A_{n,i})\times A_{n,i}, (y,y') \in v(A_{n,j}) \times A_{n,j} \big\}\\
\le&~ \max_{1\le i\le j\le M_n} \left\vert\rd_n(w_i,w_j) - \dleb(w_i,w_j)\right\vert +2\veps_n
+\frac{2c}{n^\alpha}\sup_{1\leq i \leq M_n} v(A_{n,i}).
\end{align*}
Now, given $x,y\in v(\cT^n)$, write $[x,y)$ for the path in $\cT^n$ from $x$ (included) to $y$ (excluded). So $\rd_n(x,y) = \widehat{M}([x,y),n)$ and $\dleb(x,y) = |[x,y)| $. So
\[
\dis_n\le \max_{1\le i\le j\le M_n} \left\vert \widehat{M}([w_i,w_j),n) - |[w_i,w_j)| \right\vert+ 2\max_{1\leq i \leq M_n}\left\vert \widehat{M}(A_{n,i},n) - |A_{n,i}| \right\vert +2\veps_n.
\]
Recall the definition of the event $F_{k,n}$ from (\ref{eq:Fkn}) and note that on this event, we have $M_n\le m_n\coloneqq \left\lceil\veps_n^{-1}k^{\frac{1}{\ell+1}} \left(1+10 k^{-\frac14}\right)\right\rceil = \left\lceil k^{\frac{2}{\ell+1}}  \left(1+10 k^{-\frac14}\right)\right\rceil $. 
Then for any $\veps>0$, it follows that
\begin{align*}
\p{\dis_n>4\veps_n+3\veps}
\le&~ \p{\dis_n>2\veps_n+3\veps,~F_{k,n}} + \p{F_{k,n}^c}\\
\le&~ \p{\max_{1\le i\le j\le M_n} \left\vert \widehat{M}([w_i,w_j),n) - |[w_i,w_j)| \right\vert > \veps,~F_{k,n}} \\
&\quad+\p{\max_{1\le i\le M_n} \left\vert \widehat{M}(A_{n,i},n) - |A_{n,i}| \right\vert > \veps,~F_{k,n}}+\p{F_{k,n}^c}\\
\le &~ \E{\sum_{1\le i\le j\le M_n} \p{\left\vert \widehat{M}([w_i,w_j),n) - |[w_i,w_j)| \right\vert > \veps,~F_{k,n}}} \\
&\quad+\E{ \sum_{1\leq i \leq M_n} \p{\left\vert \widehat{M}(A_{n,i},n) - |A_{n,i}| \right\vert > \veps,~F_{k,n}}}+\p{F_{k,n}^c}.
\end{align*}
Applying \refL{lem:convas} with $S = [w_i,w_j)$ and $S=A_{n,i}$ yields that
\[
\p{\dis_n>2\veps_n+3\veps}
\le 4 m_n^2\exp\left( - \frac{\veps^2 n^\alpha}{32c k(n)^{\frac{1}{\ell+1}}}\right) + e^{-k(n)^{1/3}}.
\]
Noting that $\veps_n\to0$, $k(n) = \Omega\left((\log n)^{10}\right)$, $k(n) = o\left(n^{1/10}\right)$, and $m_n< 11 k(n)^{\frac{2}{\ell+1}}$, it is easily seen that $\sum_{n\in\N} \p{\dis_n>2\veps_n+3\veps}<\infty$, and so by the Borel-Cantelli lemma,
\begin{equation}\label{in:distortion}
\dis_n \to0 \mbox{ a.s.},
\end{equation}
and the GH convergence is shown.

To show GHP convergence, we follow \cite[Proof of Proposition 4.8]{ABGM} and define $\pi^\circ_n$ on the product space $v(\cT^n)\times \cT^n$ as follows. Given $1\le i\le M_n$, for Borel sets $X\subset v(A_{n,i})$ of $(v(\cT^n),\rd_n)$ and $Y\subset A_{n,i}$ of $(\cT^n,\dleb)$, define
\[
\pi^\circ_n(X,Y) = \frac{\nu_{k,n}(X) \cdot \mu_{k}(Y)}{\max\left\{ \nu_{k,n}(v(A_{n,i})), \mu_{k}(A_{n,i})\right\}}.
\]
For $i\neq j$, let $\pi^\circ_n(v(A_{n,i}),A_{n,j}) = 0$; so 
\begin{equation}\label{eq:zeromeas}
\pi^\circ_n(S_n^c)=0.
\end{equation}
Such rectangles $X\times Y$ form a $\pi$-system generating the product $\sigma$-algebra, so $\pi^\circ_n$ extends uniquely to a measure $\pi_n$ on the product $\sigma$-algebra of $(v(\cT^n),\rd_n)$ and $(\cT^n,\dleb)$.

Now we derive the discrepancy $\rD_n\coloneqq \rD(\pi_n;\nu_{k,n},\mu_{k})$ of $\pi_n$ with respect to $\nu_{k,n}$ and $\mu_{k}$. Note that $\pi_n(v(A_{n,i}),A_{n,i}) = \min\left\{ \nu_{k,n}(v(A_{n,i})), \mu_{k}(A_{n,i})\right\}$. Writing $p$ and $p'$ for the projections of $v(\cT^n)\times\cT^n$ to the first and the second coordinates respectively, an easy calculation shows that
\begin{align*}
\rD_n =&~ \|\nu_{k,n} - p_* \pi_n\| + \|\mu_{k} - p'_* \pi_n\|\\
=&~ \sum_{i=1}^{M_n} \left[ \nu_{k,n}(v(A_{n,i})) - \min\left\{ \nu_{k,n}(v(A_{n,i})), \mu_{k}(A_{n,i})\right\}\right]\\
&\instep + \sum_{i=1}^{M_n} \left[ \mu_{k}(A_{n,i}) - \min\left\{ \nu_{k,n}(v(A_{n,i})), \mu_{k}(A_{n,i})\right\} \right]\\
=&~ \sum_{i=1}^{M_n} \left\vert \nu_{k,n}(v(A_{n,i})) - \mu_{k}(A_{n,i}) \right\vert.
\end{align*}
Note that on the event~$F_{k,n}$,  $M_n\leq m_n$, so for any $\veps>0$, 
\begin{align*}
\p{\rD_n>\veps} 
\le &~ \E{\p{\sum_{i=1}^{M_n} \left\vert \nu_{k,n}(v(A_{n,i})) - \mu_{k}(A_{n,i}) \right\vert>\veps, F_{k,n}~\bigg\vert ~\cT_{k}}} + \p{F_{k,n}^c} \\
\le&~  \E{\I{M_n\leq m_n}\sum_{i=1}^{M_n} \p{\left\vert \nu_{k,n}(v(A_{n,i})) - \mu_{k}(A_{n,i}) \right\vert>\frac{\veps}{m_n}, F_{k,n}~\Big\vert ~\cT_{k}}} + \p{F_{k,n}^c}.
\end{align*}
We now use the notation $k(n)$ to emphasize that $k(n)$ changes with $n$ and note that $m_n < 11k(n)^{\frac{2}{\ell+1}}$ and $k(n)=o(n^{1/10})$. Summing over $n\in\N$ and applying \refC{cor:embellish0} then yields that
\[
\E{\I{M_n\leq m_n}\sum_{i=1}^{M_n} \p{\left\vert \nu_{k,n}(v(A_{n,i})) - \mu_{k}(A_{n,i}) \right\vert>\veps/M_n, F_{k,n}~\Big\vert ~\cT_{k}}}\leq m_n \cdot o(n^{-3}),
\]
and so this combined with Lemma~\ref{lem:convas} to bound $\IP(F_{k,n}^c)$ yields
\[
\sum_{n\in\N}\p{\rD_n>\eps} \leq \sum_{n\in\N}\left( m_n \cdot o(n^{-3})+e^{-k(n)^{1/3}}\right)<\infty.
\]
 Hence, 
\begin{equation}\label{conv:desc}
\rD_n\to0 \mbox{ a.s.}
\end{equation}
Finally, note that for all $n\in\N$, 
\[
\dghp\left(\left(v(\cT^n),\rd_n,\nu_{k(n),n}\right),
\left(\cT^n,\dleb,\mu_{k(n)}\right)\right)\le \max\left\{ \dis_n/2, \rD_n, \pi_n(S_n^c)\right\}.
\]
 It follows from (\ref{in:distortion}), (\ref{eq:zeromeas}), and (\ref{conv:desc}) that, a.s.,
\begin{align*}
\dghp\left(\left(v(\cT^n),\rd_n,\nu_{k(n),n}\right),
\left(\cT^n,\dleb,\mu_{k(n)}\right)\right)
\to 0.
\end{align*}
Since we are working in the probability space where the equalities of \refP{prop:dist} hold, the proof is completed.
\end{proof}

\section{Tightness property}\label{sec:tight}

In \refS{sec:polya}, we describe how the combinatorial tree $\rT(n)$ relates to an infinite-colors P\'olya urn, which helps us analyse the heights and sizes of subtrees in $\rT(n)$. In \refS{sec:pftight}, we establish \refP{prop:tight}, with the proofs of several lemmas deferred to the subsequent subsections.

\subsection{An infinite-colors P\'olya urn}\label{sec:polya}

At time $0$, an urn contains only one ball of color $1$. At time $n\in\N$, pick a ball from the urn uniformly at random, return the ball to the urn along with another ball of the same color. In addition, if $\ell$ divides $n$, and if the urn contains balls of colors $1,\ldots,k-1$, then an additional ball of color $k$ is added to the urn. For $n,k\in\N$ with $n\ge (k-1)\ell$, let $U_k(n)$ be the number of balls of color $k$ at time $n$, and let $M_k(n) = U_1(n)+\ldots + U_k(n)$. Note that at time $k\ell$, there are $(\ell+1)k+1$ balls of colors $1,\ldots,k+1$ (the extra $1$ accounts for the initial ball of color $1$), and there is only $1$ ball of color $k+1$.

Recall the construction of the combinatorial tree $\rT(n)$ from \refS{sec:intro}. For all $k\in\N$, $v_{k\ell}$ is a branchpoint, i.e., a vertex with degree at least $3$. For all $k, n\in\N$ with $n\ge (k-1)\ell$, we call the (graph-theoretic) path in $\rT(n)$ from $v_{k\ell}$ to the leaf $L_{1+k}$ {\em branch $k$}. The {\em length} of a path in $\rT(n)$ is the number of (graph-theoretic) edges in it. Note that for $k,n\in\N$ with $n\ge (k-1)\ell$, the length of branches $1,\ldots,k$ in $\rT(n)$ have the same law as $(U_1(n),\ldots,U_k(n))$, and $(M_1(n),\ldots,M_k(n))$ have the same law as the number of edges in $(\rT_1(n),\ldots,\rT_k(n))$. We may and shall use $U_k(n)$ to denote the length of branch $k$ in $\rT(n)$.

\subsection{Outline and proof for \refP{prop:tight}}\label{sec:pftight}

We first outline the essential step to prove the GH version of \refP{prop:tight}: to obtain a height-bound for the subtrees of $\rT(n)$ pendant to $\rT_k(n)$, where $\rT_k(n)$ is the subtree of $\rT(n)$ spanned by the root and the first $k$ leaves. To accomplish this, we express the height-bound of the subtrees in terms of $\sum_{i=k(n)+1}^{\lfloor n/\ell\rfloor+1} \frac{(C_i - C_{i-1})U_i(n)}{C_i}$, in \refL{lem:expbd}; then deduce a bound for this sum, in \refL{lem:sum}. Write $\cF_{k,n}$ for the $\sigma$-algebra generated by $C_{k},\ldots,C_{\lfloor n/\ell\rfloor+1},U_{k+1}(n),\ldots,U_{\lfloor n/\ell\rfloor+1}$. For all $i\in\N$ write $\Delta C_i = C_i - C_{i-1}$.

\begin{lem}\label{lem:expbd}
Fix $n,k\in\N$ with $n\ge  k\ell$, and let $u$ be a uniformly chosen vertex from   $v\left(\rT(n)\right)\setminus v\left(\rT_{k}(n)\right)$. Then for positive $\lambda\le \left\{ \max_{k+1\le i\le \lfloor n/\ell\rfloor+1} U_i(n)\right\}^{-1}$,
\[
\E{\exp\left(\lambda \cdot \dgr(u, \rT_k(n))\right)~\Big\vert~ \cF_{k,n}} \le \exp\left(\lambda\ell+5\lambda \sum_{i=k+1}^{\lfloor \frac{n}{\ell}\rfloor+1} \frac{\Delta C_i U_i(n)}{C_i}\right).
\]
\end{lem}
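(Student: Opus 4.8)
The plan is to describe the walk from the uniformly chosen vertex $u$ down to the spanned subtree $\rT_k(n)$ as a sum of increments governed by the P\'olya-urn structure, and then estimate the moment generating function of that sum.

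\textbf{Setting up the decomposition.} Conditionally on $\cF_{k,n}$, the tree $\rT(n)$ is obtained from $\rT_k(n)$ by attaching the pendant subtrees that grew after leaf $L_{1+k}$ appeared. A vertex $u\in v(\rT(n))\setminus v(\rT_k(n))$ lies on (or inside a subtree hanging off) one of the branches $k+1,\dots,\lfloor n/\ell\rfloor+1$; its distance to $\rT_k(n)$ is the graph distance to the nearest vertex of $\rT_k(n)$, which we want to bound. First I would argue that since $u$ is uniform over $v(\rT(n))\setminus v(\rT_k(n))$, we can expose where $u$ sits level by level: the first branchpoint of $\rT(n)$ beyond $\rT_k(n)$ that lies on the path from $u$ to the root, then the next, and so on. At each such step the conditional probability that $u$'s path uses a given sub-branch is proportional to the number of vertices in the corresponding pendant subtree; and crucially the \emph{length} contributed by branch $i$ before the path either terminates at $\rT_k(n)$ or recurses is at most $U_i(n)$, while the probability of stepping onto branch $i$'s pendant mass at all is controlled by the ratio of that pendant mass to the total remaining mass. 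This is exactly the kind of bookkeeping done for pendant subtrees in the urn picture, and one expects, by the self-similarity of the recursive construction, that the pendant mass attached along branch $i$ is comparable (up to constants, on the relevant high-probability events) to $(C_i-C_{i-1})/C_i$ times the total mass outside $\rT_k(n)$ — this is where the ratio $\Delta C_i U_i(n)/C_i$ and the factor $5$ come from.

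\textbf{The MGF estimate.} Granting the decomposition, I would write $\dgr(u,\rT_k(n)) \le \sum_{i=k+1}^{\lfloor n/\ell\rfloor+1} X_i$, where $X_i$ is (bounded by) $U_i(n)$ times an indicator that $u$'s ancestral path passes through branch $i$, plus a boundary term of size at most $\ell$ for the final step into $\rT_k(n)$. Conditionally on $\cF_{k,n}$, each $X_i\le U_i(n)$ deterministically, so for $\lambda\le\{\max_i U_i(n)\}^{-1}$ we have $\lambda X_i\le 1$, and we may use $e^{\lambda X_i}\le 1+\lambda X_i + (\lambda X_i)^2 \le 1 + 2\lambda X_i$ (using $\lambda X_i\le 1$), hence $e^{\lambda X_i}\le \exp(2\lambda X_i)$ is too lossy; better, $\E{e^{\lambda X_i}\mid \cF_{k,n},\text{past}} \le 1 + (e^{\lambda U_i(n)}-1)\,p_i \le \exp\bklr{(e^{\lambda U_i(n)}-1)p_i}$, where $p_i$ is the conditional probability of routing through branch $i$ and $p_i \le C\,\Delta C_i/C_i$ for the appropriate constant. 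Since $\lambda U_i(n)\le 1$, $e^{\lambda U_i(n)}-1\le \lambda U_i(n)(1 + \lambda U_i(n)) \le 2\lambda U_i(n)$; combining this with the factor absorbed from the boundary term $\lambda\ell$, and taking the product over $i$ (the routing events along the ancestral path being handled by iterated conditioning, so the bounds multiply), yields
\[
\E{\exp(\lambda\,\dgr(u,\rT_k(n)))\mid\cF_{k,n}} \le \exp\Bklr{\lambda\ell + 5\lambda\sum_{i=k+1}^{\lfloor n/\ell\rfloor+1}\frac{\Delta C_i\,U_i(n)}{C_i}},
\]
provided the constant in $p_i\le C\Delta C_i/C_i$ together with the factor $2$ from linearizing the exponential fits under $5$.

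\textbf{Main obstacle.} The delicate point is the second paragraph: justifying $p_i \lesssim \Delta C_i/C_i$, i.e.\ that the pendant mass hanging off branch $i$ (more precisely, off the portion of the pendant forest routed through branch $i$) is a bounded multiple of $\tfrac{\Delta C_i}{C_i}\cdot|v(\rT(n))\setminus v(\rT_k(n))|$, and that these routing bounds genuinely multiply under iterated conditioning rather than merely being marginal bounds. I would handle this by identifying the relevant masses with urn counts from \refS{sec:polya}: the number of vertices in $\rT(n)$ added after time $(i-1)\ell$ and routed through branch $i$ corresponds to a block of colors in the infinite-color urn, whose proportion concentrates — by the same P\'olya-urn martingale arguments used elsewhere in the paper — around $\Delta C_i/C_i$ (this is precisely the kind of ratio that $C_i = C_1/(B_1\cdots B_{i-1})$ from \refFt{fact:repr} produces), with the factor $5$ giving enough room to absorb the concentration constants and the linearization losses. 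An alternative, cleaner route, which I would also attempt, is to avoid ratios of random masses altogether: directly exhibit the path from $u$ as inserted into one of the $\cT_k$-embellished branches and use the Dirichlet fragmentation of branch lengths from \refL{lem:frag} to get exact conditional distributions for how far up branch $i$ the path goes, then bound the resulting MGF term by term.
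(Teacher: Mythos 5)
There is a genuine gap. Your plan has the right shape (decompose the distance as a sum of branch contributions, bound $\E{e^{\lambda X_i}}$ by $1 + p_i(e^{\lambda U_i(n)}-1)$, and multiply), but the step you flag as the "main obstacle" is in fact where the argument breaks down. You propose to show $p_i \lesssim \Delta C_i/C_i$ via P\'olya-urn martingale concentration, absorbing the slack constant into the factor $5$. That cannot work here for two reasons. First, the bound is a \emph{pointwise} inequality conditional on $\cF_{k,n}$ (the right side is $\cF_{k,n}$-measurable), so a high-probability concentration statement will not yield it. Second, the constant $5$ has no room for a routing constant $C$: in the paper's accounting the routing probability is \emph{exactly} $\Delta C_i/C_i$, and $5 = 2+3$ arises purely from the elementary bounds $e^\lambda - 1 \le 2\lambda \le 2\lambda U_i(n)$ (using $U_i(n)\ge 1$) and $\lambda e^\lambda \le 3\lambda$ for $\lambda \le 1$. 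Your linearization $e^{\lambda U_i}-1 \le 2\lambda U_i$ alone would already want a factor $2$ and you still have the $e^\lambda$ outside; if $p_i \le C\,\Delta C_i/C_i$ with any $C>1$ the product overshoots $5$.

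What makes the paper's proof go through is not concentration but an explicit coupling (Section~\ref{sec:pf}, ``A coupling''): a chain of points $(X_i^\circ)$ is built so that $X_{i}^\circ$ lands at a uniform location on the new branch $i$ with probability \emph{exactly} $\Delta C_i/C_i$ (given the $C_j$'s), and otherwise equals $X_{i-1}^\circ$; each $X_i^\circ$ is then marginally uniform for the Lebesgue measure on $\cT^\circ_i$ (Lemma~\ref{lem:unif}). This recursion produces the exact distributional identity of Lemma~\ref{lem:sumindi}, $\law(\dgr(X_m,\cT'_{k,m})\mid E) \eqdist \sum_i \lceil V_i U_i(m\ell)\rceil\,\I{W_i\le\Delta C_i/C_i}$, after which the MGF computation is as you sketch but with $p_i = \Delta C_i/C_i$ exactly. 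Finally, the paper does not attempt to compute the routing law for the uniform vertex $u$ directly; instead it stochastically dominates $\dgr(u,\rT_k(n))$ by $\ell + \dgr(X_m,\cT'_{k,m})$, using that given the event $E$, $X_m$ lies on a uniformly chosen edge of $e(\cT'_m)\setminus e(\cT'_{k,m})$ (via Lemmas~\ref{lem:frag} and~\ref{lem:unif}). Your alternative suggestion to use the Dirichlet fragmentation from Lemma~\ref{lem:frag} is indeed the right direction, but you would need to build the recursive coupling $(X_i^\circ)$ to turn it into the multiplicative structure you want, rather than merely invoking it to control branch lengths.
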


\begin{lem}\label{lem:sum}
Let $\veps\in(0,1)$. Let $k:\N\to\N$ be such that $k(n)=\Omega\left(n^{1/100}\right)$ and $k(n)=o\left(n^{\frac{\ell}{2\ell+1}}\right)$. Then
\begin{equation*}
\sum_{n\in\N}n\cdot \p{\sum_{i=k(n)+1}^{\floor{\frac{n}{\ell}}+1} k(n)^{\alpha-\veps} \frac{\Delta C_i U_i(n)}{n^{\alpha}C_i}>1 \mbox{ or} \max_{k(n)+1\le i\le \lfloor \frac{n-1}{\ell}\rfloor+1} U_i(n) > n^\alpha k(n)^{-\alpha+\veps}}
<\infty.
\end{equation*}
\end{lem}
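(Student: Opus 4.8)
The strategy is to control the two bad events separately, show each has probability $o(n^{-2})$ uniformly over the relevant range, and then apply Borel--Cantelli after the weighting by $n$. Throughout write $k=k(n)$ and recall from \refL{lem:bd} (and \refC{cor:bd2}) that $C_i^{\ell+1}$ concentrates around $i$, so $C_i \asymp i^{1/(\ell+1)}$ and $\Delta C_i = C_i-C_{i-1} \asymp i^{1/(\ell+1)-1} = i^{-\alpha}$ with overwhelming probability; we will condition on a good event of this kind (an intersection over $i$ of the events in \refL{lem:bd}, whose complement has probability $\le 2\sum_{i\ge k}e^{-i^{1/2}/4} = o(n^{-c})$ for every $c$ since $k=\Omega(n^{1/100})$). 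On this good event, $\Delta C_i U_i(n)/C_i \asymp i^{-\alpha-1/(\ell+1)} U_i(n) \cdot i^{1/(\ell+1)}/1$; more carefully, $\Delta C_i/C_i \asymp 1/i$, so the sum is comparable to $\sum_{i=k+1}^{\lfloor n/\ell\rfloor+1} U_i(n)/i$.

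\textbf{Step 1: the maximum.} For the second event, I need $\p{\max_{k+1\le i\le \lfloor(n-1)/\ell\rfloor+1} U_i(n) > n^\alpha k^{-\alpha+\veps}} = o(n^{-2})$. Here $U_i(n)$ is the count of color $i$ in the P\'olya urn of \refS{sec:polya}; color $i$ is born at time $(i-1)\ell$ with a single ball, and $U_i(n)$ is then a (time-inhomogeneous) P\'olya-type count. The key input is that $\e[U_i(n)] \asymp (n/i)^{\alpha}$ (this is exactly the kind of moment estimate coming from the urn analysis of \citet{PRR14,PRR}, and the normalization $c$ was chosen to match it), and $U_i(n)$ has exponential tails on the scale of its mean — indeed conditionally on the gamma/beta variables it is a sum of independent Bernoullis as in \refFt{fact:bin}, or one can use the martingale $U_i(n)/\prod(\text{factors})$ and a Chernoff bound. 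Since $(n/i)^\alpha \le (n/k)^\alpha = n^\alpha k^{-\alpha}$ for $i\ge k$, the threshold $n^\alpha k^{-\alpha+\veps}$ exceeds the largest mean by a factor $k^{\veps}$, so each $\p{U_i(n) > n^\alpha k^{-\alpha+\veps}} \le \exp(-c\, k^{\veps})$ (a Chernoff bound; the deviation is a constant multiple above the mean, giving linear-in-mean exponent, and the mean is $\ge $ const here only for $i$ near $k$, but for larger $i$ the mean is smaller and the bound only improves — one gets $\exp(-c\,k^\veps (n/i)^\alpha \cdot (n/i)^{-\alpha}) $, i.e. uniformly $\le \exp(-c k^\veps)$ after taking $i$ as large as $n$, where the mean is $\Theta(1)$; this is the point requiring care). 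Then a union bound over at most $n$ values of $i$ gives $\le n\exp(-ck^\veps)$, and since $k=\Omega(n^{1/100})$ this is $o(n^{-c'})$ for every $c'$, so in particular summable against $n$.

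\textbf{Step 2: the sum.} For the first event, after conditioning on the good event of Step~1's type I have $\sum_i k^{\alpha-\veps}\Delta C_i U_i(n)/(n^\alpha C_i) \lesssim k^{\alpha-\veps} n^{-\alpha}\sum_{i=k+1}^{\lfloor n/\ell\rfloor} U_i(n)/i$. Taking expectations and using $\e[U_i(n)]\asymp (n/i)^\alpha$ gives $\e\big[\sum_i U_i(n)/i\big] \asymp n^\alpha \sum_{i=k+1}^{n/\ell} i^{-1-\alpha} \asymp n^\alpha k^{-\alpha}$, so the whole sum has expectation of order $k^{\alpha-\veps} n^{-\alpha}\cdot n^\alpha k^{-\alpha} = k^{-\veps} \to 0$. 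To upgrade the expectation bound to a tail bound of the required strength $o(n^{-2})$, I would either (a) use Markov with a sufficiently high moment — the $p$-th moment of $\sum U_i(n)/i$ can be controlled using the conditional independence structure of \refFt{fact:bin} together with the exponential tails, giving that the sum is $O(k^{-\veps})$ with probability $1-o(n^{-2})$ once $p$ is chosen depending on the gap between the exponents $1/100$ and $\ell/(2\ell+1)$ in the hypothesis on $k(n)$ — or (b) more cleanly, observe that conditionally on $(C_i)$ the sum is a nonnegative linear combination of independent bounded random variables (the $B_m$'s of \refFt{fact:bin}) and apply a Bernstein/Chernoff inequality directly, with the coefficients and the concentration of $(C_i)$ from \refL{lem:bd} feeding in. Either way one obtains $\p{\sum_i k^{\alpha-\veps}\Delta C_i U_i(n)/(n^\alpha C_i) > 1} = o(n^{-2})$.

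\textbf{Conclusion and main obstacle.} Combining Steps 1 and 2, the probability in the statement is $o(n^{-2})$, so $\sum_n n\cdot(\text{that probability}) < \infty$, proving the lemma. The main obstacle is Step~2: getting a tail bound for $\sum_i \Delta C_i U_i(n)/C_i$ strong enough to beat the factor $n$, since the summands $U_i(n)$ are not independent across $i$ and their natural fluctuations are on the scale of their means. The way through is to exploit the conditional-independence description from \refFt{fact:bin} (conditioning on the gamma/beta variables generating the $C_i$), reducing to a sum of independent bounded contributions to which Bernstein-type inequalities apply, and to use the strong concentration of the $C_i$ from \refL{lem:bd} — made quantitatively effective by the lower bound $k(n)=\Omega(n^{1/100})$, which is precisely what ensures $e^{-k(n)^{\text{(power)}}}$ and $n^{-(\text{large})}$ both dominate $n^{-2}$; the upper bound $k(n)=o(n^{\ell/(2\ell+1)})$ is what keeps $\e[U_i(n)]\gg 1$ down the relevant range and keeps $k^{-\veps}$ genuinely small relative to $1$ so that the threshold $1$ in the statement is not crossed in expectation.
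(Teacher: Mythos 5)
Your overall strategy — bound the two bad events separately, show each is $o(n^{-2})$, then Borel--Cantelli after the factor $n$ — matches the paper. But the implementation differs in ways that matter, and there are genuine gaps in your reasoning.

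The most significant difference: for the sum in Step~2, the paper does \emph{not} attempt to control $\sum_i k^{\alpha-\eps}\Delta C_i U_i(n)/(n^\alpha C_i)$ directly. Instead it observes that for $n$ large, the event $\{\sum_i k^{\alpha-\eps}\Delta C_i U_i(n)/(n^\alpha C_i) > 1\}$ is contained in the union over $i$ of the events $\{\Delta C_i U_i(n)/(n^\alpha C_i) > c_\alpha i^{-\alpha-1+\eps/2}\}$, because the thresholds $c_\alpha i^{-\alpha-1+\eps/2}$ sum to at most $k^{-\alpha+\eps}$ over $i\geq k+1$ when $c_\alpha$ is small and $k$ large. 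This converts a sum estimate into a max estimate, sidestepping all dependence and joint moment issues. Each individual event is then killed by Markov with a high moment, using the clean moment bound of \refL{lem:urnmombd}, namely $\E\bigl[(\Delta C_k U_k(n)/C_k)^p\bigr]\leq c\, n^{p\ell/(\ell+1)}k^{-p(2\ell+1)/(\ell+1)}$, whose exponent $(\alpha+1)(\ell+1)=2\ell+1$ cancels exactly against the threshold's. You instead propose to bound a $p$-th moment of the whole sum or run a conditional Bernstein inequality; both are substantially messier, because (i) the cross-moments $\E[U_i(n)U_j(n)]$ etc.\ must be computed, (ii) conditionally on the $C_j$'s the $U_i(n)$ for different $i$ are \emph{not} independent (the $\ell$ vertices inserted at each stage split multinomially among branches), so the ``conditional independence'' you cite does not hold across $i$, and (iii) \refFt{fact:bin} is stated for a fixed $S\in\cB_k^+$ and would need to be re-stated for each branch $i>k$ with its own starting time $(i-1)\ell$.

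A second gap is in Step~1. You assert that $U_i(n)$ ``has exponential tails on the scale of its mean.'' Unconditionally this is false: via de Finetti (which is precisely how the paper establishes \refL{lem:urnmombd}), $U_i(n)$ given $M_i(n)$ is a binomial with a Beta$(1,(i-1)(\ell+1))$ success probability, and $M_i(n)$ itself has only polynomial moment growth (\refL{lem:mkmombd}); the resulting tails of $U_i(n)$ decay polynomially, with exponent growing in $i$ but not exponentially. Your parenthetical ``the deviation is a constant multiple above the mean, giving linear-in-mean exponent'' is also incorrect — the threshold $n^\alpha k^{-\alpha+\eps}$ exceeds $\E[U_i(n)]\asymp (n/i)^\alpha$ by the growing factor $(i/k)^\alpha k^\eps$, not a constant — and the algebra in ``$\exp(-c\,k^\veps(n/i)^\alpha\cdot(n/i)^{-\alpha})$'' simply simplifies to $\exp(-ck^\eps)$ without actually establishing a Chernoff bound. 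What saves the day is exactly what the paper uses: Markov with $\E[U_i(n)^p]\leq c(n/k)^{p\alpha}$ for $p$ as large as you like, after which the union bound gives $\lesssim k^{1-p\eps}$ and $k=\Omega(n^{1/100})$ closes the argument. So the conclusion of your Step~1 is correct, but the argument sketched for it is not; the clean route goes through polynomial moments rather than exponential tails, and that in turn needs the moment machinery of Lemmas~\ref{lem3}--\ref{lem:urnmombd}, which you do not establish.

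In short: your outline has the right target bound and the right asymptotic sensibilities, but the two concrete technical steps it hinges on — exponential concentration of $U_i(n)$ and direct control of the sum — are respectively wrong and harder than necessary. The missing ideas from the paper are the term-by-term decomposition of the sum event and the $p$-th moment bounds from the P\'olya urn de~Finetti representation.
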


We defer the proofs of Lemmas \ref{lem:expbd} and \ref{lem:sum} to Sections \ref{sec:pf} and \ref{sec:branch}, respectively. They lead to a tightness property of $(\rT_k(n):k\in\N, n\ge k\ell)$, i.e., the GH version of \refP{prop:tight}. To wit, for all $k,n\in\N$ with $n\ge k\ell$, let
\[
D_{k,n} = \frac{c}{n^\alpha}\cdot \max\left\{\dgr\left(w,\rT_{k}(n)\right):w\in v(\rT(n))\setminus v(\rT_{k}(n))\right\}.
\]
It follows from the definition of GH distance that 
\[
\dgh\left(\big(v(\rT_{k}(n)),\frac{c}{n^\alpha}\cdot \dgr\big),\big(v(\rT(n)),\frac{c}{n^\alpha}\cdot \dgr\big)\right)\le D_{k,n}.
\]
We use Lemmas~\ref{lem:expbd} and~\ref{lem:sum} below to show that for an appropriate sequence of increasing $k(n)$,~$D_{k(n),n}\to0$ a.s.; details are given in the proof of \refP{prop:tight}.

Next, we take the measures into consideration, and examine GHP convergence of \refP{prop:tight}. We start by stating a fact about the GHP distance between subspaces that follows in a straightforward way from constructions and definitions; more general statements appear in \citep[Fact 6.4]{ABW} and \citep[Fact 8.6]{Wen}. For all $k,n\in\N$ with $n\ge (k-1)\ell$, let $\overline{\nu}_{k,n}$ be the projection of the uniform probability measure $\nu_n$ of $v(\rT(n))$ onto $v(\rT_k(n))$, i.e., for any $w\in v(\rT_k(n))$, write $\overline{w}$ for the maximal subset of $v(\rT(n))$ such that the removal of $w$ disconnects $\overline{w}$ from $\rT_k(n)$, and let $\overline{\nu}_{k,n}(w) = \nu_n(\overline{w}\cup\{w\})$. Write $\widehat{\bT}(n)=\left(v(\rT(n)),\frac{c}{n^\alpha}\cdot \dgr,\nu_n\right)$, $\overline{\bT}_{k}(n) = \left(v(\rT_{k}(n)),\frac{c}{n^\alpha}\cdot \dgr,\overline{\nu}_{k,n}\right)$, and $\widehat{\bT}_{k}(n) = \left(v(\rT_{k}(n)),\frac{c}{n^\alpha}\cdot \dgr,\nu_{k,n}\right)$. 

\begin{fact}\label{fact:ghp}
For all $k,n\in\N$ with $n\ge(k-1)\ell$, 
\[
\dghp\left(\widehat{\bT}(n), \overline{\bT}_k(n)\right) \le D_{k,n}.
\]
\end{fact}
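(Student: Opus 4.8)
The plan is to exhibit an explicit correspondence and coupling measure between $\widehat{\bT}(n)$ and $\overline{\bT}_k(n)$ and bound the three quantities appearing in the definition of $\dghp$. Recall that both spaces live on subsets of $v(\rT(n))$ with the same (rescaled) graph metric $\frac{c}{n^\alpha}\cdot\dgr$, so there is a canonical candidate: for each $w\in v(\rT_k(n))$, recall that $\overline{w}$ denotes the maximal set of vertices of $v(\rT(n))$ disconnected from $\rT_k(n)$ by the removal of $w$, and note that $\{\overline{w}\cup\{w\}:w\in v(\rT_k(n))\}$ partitions $v(\rT(n))$ (each vertex of $\rT(n)$ lies in the pendant subtree hanging off exactly one vertex of $\rT_k(n)$, or is itself in $\rT_k(n)$). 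Define the correspondence
\[
R=\bigl\{(x,w)\in v(\rT(n))\times v(\rT_k(n)): x\in\overline{w}\cup\{w\}\bigr\},
\]
which is a genuine correspondence since every $x$ lies in some block and every $w$ is paired at least with itself. Define the coupling measure $\pi$ on $v(\rT(n))\times v(\rT_k(n))$ by pushing $\nu_n$ forward along the map $x\mapsto(x,w(x))$, where $w(x)$ is the unique vertex of $\rT_k(n)$ with $x\in\overline{w(x)}\cup\{w(x)\}$; then $\pi$ is supported on $R$, so $\pi(R^c)=0$, and the first marginal of $\pi$ is exactly $\nu_n$ while the second marginal is exactly $\overline{\nu}_{k,n}$ by the definition of the projected measure, so the discrepancy $\rD(\pi;\nu_n,\overline{\nu}_{k,n})=0$.

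It remains to bound the distortion $\dis(R;\frac{c}{n^\alpha}\dgr,\frac{c}{n^\alpha}\dgr)$. Take $(x,w),(y,z)\in R$, so $x$ lies in the pendant subtree at $w$ (or equals $w$) and likewise $y$ at $z$. Since the pendant subtree at $w$ attaches to $\rT_k(n)$ only through $w$, the geodesic from $x$ to $y$ in $\rT(n)$ must pass through both $w$ and $z$; hence $\dgr(x,y)=\dgr(x,w)+\dgr(w,z)+\dgr(z,y)$, and therefore
\[
\bigl|\dgr(x,y)-\dgr(w,z)\bigr|=\dgr(x,w)+\dgr(z,y)\le 2\max\bigl\{\dgr(v,\rT_k(n)):v\in v(\rT(n))\setminus v(\rT_k(n))\bigr\}.
\]
Multiplying by $\frac{c}{n^\alpha}$ gives $\dis(R;\tfrac{c}{n^\alpha}\dgr,\tfrac{c}{n^\alpha}\dgr)\le 2D_{k,n}$, so $\tfrac12\dis(R;\cdot,\cdot)\le D_{k,n}$. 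Combining, $\dghp(\widehat{\bT}(n),\overline{\bT}_k(n))\le\max\{D_{k,n},0,0\}=D_{k,n}$, as claimed.

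I do not expect any real obstacle here: the statement is, as the authors note, a straightforward consequence of the definitions, and the only mild point requiring care is the geodesic-decomposition identity $\dgr(x,y)=\dgr(x,w)+\dgr(w,z)+\dgr(z,y)$, which is immediate from the tree structure and the fact that distinct pendant subtrees are joined to $\rT_k(n)$ at distinct cut vertices. One should also double-check the degenerate cases $w=z$ (then the bound reads $|\dgr(x,y)-0|\le\dgr(x,w)+\dgr(w,y)\le 2D_{k,n}n^\alpha/c$, still fine) and $x=w$ or $y=z$ (the corresponding term simply vanishes), but these introduce no difficulty.
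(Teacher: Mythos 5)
Your argument is correct and is essentially the canonical definition-chasing proof: the paper itself does not spell out a proof of Fact~\ref{fact:ghp}, instead citing more general statements in \cite{ABW} and \cite{Wen}, and the correspondence/coupling you write down (partition by pendant subtrees, push $\nu_n$ forward along $x\mapsto(x,w(x))$) is precisely what makes the fact ``straightforward from constructions and definitions.'' Your handling of the $w=z$ case via the triangle inequality, rather than the geodesic decomposition that holds only for $w\neq z$, is the one point of care needed and you address it correctly.
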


Upon showing that $D_{k(n),n}\to0$ a.s. for an appropriate $k(n)$, to prove \refP{prop:tight} it suffices to bound $\dghp\left(\overline{\bT}_{k}(n),\widehat{\bT}_{k}(n)\right)$. Note that $\overline{\bT}_{k}(n)$ and $\widehat{\bT}_{k}(n)$ differ only in their measures, so $\dghp\left(\overline{\bT}_{k}(n),\widehat{\bT}_{k}(n)\right)=d_{k,n}(\overline{\nu}_{k,n},\nu_{k,n})$, where $d_{k,n}$ denotes the L\'evy-Prokhorov distance on the metric space $(v(\rT_{k}(n)),\frac{c}{n^\alpha}\cdot \dgr)$.
We show the following lemma in \refS{sec:prok}. 

\begin{lem}\label{lem:prok}
Let $k:\N\to\N$ be such that $k(n)=\Omega\left(n^{1/100}\right)$ and $k(n)=o\left(n^{1/3}\right)$ and assume $\ell\geq 2$. Then, almost surely as $n\to\infty$,
\[
d_{k(n),n}\left(\overline{\nu}_{k(n),n},\nu_{k(n),n}\right) \to0.
\]
\end{lem}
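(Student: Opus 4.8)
\textbf{Proof plan for \refL{lem:prok}.}

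The plan is to control the Lévy–Prokhorov distance $d_{k,n}(\overline{\nu}_{k,n},\nu_{k,n})$ by exhibiting, for a suitable $\veps>0$, a good event on which (i) $\overline{\nu}_{k,n}$ and $\nu_{k,n}$ agree after moving at most $\veps$-much mass, and (ii) the mass that must be moved lives at $\dgr$-distance $o(n^\alpha)$ from where it needs to go. The key observation is that $\overline{\nu}_{k,n}$ is obtained from $\nu_{k,n}$ by, for each $w\in v(\rT_k(n))$, redistributing the mass $\nu_n(\overline{w})$ of the pendant subtree hanging off $w$ onto the single point $w$, whereas $\nu_{k,n}(w)=1/|v(\rT_k(n))|$ is the ``fair share'' of $w$. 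So $d_{k,n}(\overline{\nu}_{k,n},\nu_{k,n})$ is at most the total-variation-type discrepancy $\sum_{w\in v(\rT_k(n))}|\overline{\nu}_{k,n}(w)-\nu_{k,n}(w)|$ (for the LP distance we in fact only need a weaker statement, but the TV bound is cleaner), and the summand splits into the contribution of the genuinely pendant mass $\nu_n(\overline{w})$ and the fluctuation of $\nu_{k,n}(w)=1/|v(\rT_k(n))|$ against the ``expected'' share; the latter is handled exactly as in \refP{prop:finite} via \refL{lem:convas}/\refC{cor:embellish0}, since $|v(\rT_k(n))|\sim c^{-1}n^\alpha C_k$. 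First I would therefore reduce to bounding $\sum_{w}\nu_n(\overline{w})$, i.e. the total $\nu_n$-mass lying outside $\rT_{k(n)}(n)$, together with $\max_w \nu_n(\overline{w})$, the maximal pendant mass.

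Next I would quantify these two quantities using the urn description of \refS{sec:polya} and the companion results of \citet{PRR14,PRR} together with \refL{lem:maxsize} (whose use is the reason for the restriction $\ell\ge 2$). The total pendant mass outside $\rT_{k(n)}(n)$ is $1-\nu_n(v(\rT_{k(n)}(n))) = 1 - M_{k(n)}(n)/|v(\rT(n))|$; since $|v(\rT(n))|\sim n$ and, by the urn asymptotics, $M_{k(n)}(n)$ is of order $n\cdot C_{k(n)}/C_{\lfloor n/\ell\rfloor}\asymp n\cdot (k(n)/(n/\ell))^{\alpha}$ up to lower-order corrections, this total mass is $O(k(n)^{\alpha}/n^{\alpha})=o(1)$ under the hypothesis $k(n)=o(n^{1/3})$ (indeed $k(n)=o(n)$ would do for GH purposes; the stronger bound $k(n)=o(n^{1/3})$ is what makes the error terms in \refC{cor:embellish0} summable and lets us conclude a.s., not merely in probability). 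For the maximal pendant mass, \refL{lem:maxsize} gives that, with probability $1-o(1)$, the maximal size of a subtree of $\rT(n)$ pendant to $\rT_{k(n)}(n)$ is $o\!\left(n\,k(n)^{-8/(3(\ell+1))}\right)$, which after dividing by $|v(\rT(n))|\sim n$ is $o(1)$; this controls the ``large atoms'' that LP distance is sensitive to. Combining: on the good event, moving the at most $o(1)$ total pendant mass onto $\rT_{k(n)}(n)$ changes measures by $o(1)$ in LP distance, and each atom moved has mass $o(1)$, so the LP distance is $o(1)$; then I would pass from ``in probability'' to ``a.s.'' by Borel–Cantelli along $n\in\N$, using the summability of the failure probabilities coming from \refL{lem:bd}, \refL{lem:maxsize}, and \refC{cor:embellish0}.

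The main obstacle is the second ingredient: showing that no single pendant subtree carries non-negligible mass. Unlike the total pendant mass — which is an averaged quantity and follows from first-moment urn estimates — the maximal pendant subtree size requires a genuine uniform (over the $k(n)$ branches and over the continuum of insertion points) control, and this is precisely the content of \refL{lem:maxsize}, whose proof balances the growth of $U_i(n)$ (the branch lengths, distributed as the time-inhomogeneous Pólya urn of \refS{sec:polya}) against the masses accumulated along them; it is here that the argument needs $\ell\ge 2$, because for $\ell=1$ there are no degree-$2$ vertices and the bookkeeping of ``decorated masses'' degenerates. A secondary, more routine obstacle is the bookkeeping to make the two error sources — the pendant-mass error and the $\nu_{k,n}$-versus-uniform fluctuation error from \refC{cor:embellish0} — simultaneously summable in $n$, which dictates the precise windows $k(n)=\Omega(n^{1/100})$ and $k(n)=o(n^{1/3})$; once \refL{lem:maxsize} is in hand, assembling these into the claimed a.s.\ convergence via \refFt{fact:ghp}-style projection estimates and Borel–Cantelli is straightforward.
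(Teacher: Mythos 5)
Your plan to bound the L\'evy--Prokhorov distance by a total-variation discrepancy, and then to control that TV discrepancy by the total pendant mass together with the maximum pendant mass, fails on the very first quantitative claim. You assert that the total $\nu_n$-mass outside $\rT_{k(n)}(n)$, namely $1 - M_{k(n)}(n)/|v(\rT(n))|$, is $o(1)$. This is false: $M_{k(n)}(n)$ is of order $n^\alpha k(n)^{1/(\ell+1)}/c$ (note $C_k \asymp k^{1/(\ell+1)}$, not $k^\alpha$ as your calculation tacitly uses), while $|v(\rT(n))|\sim n(1+1/\ell)$, so $M_{k(n)}(n)/|v(\rT(n))| = O\bigl(n^{\alpha-1}k(n)^{1/(\ell+1)}\bigr)\to 0$ under $k(n)=o(n^{1/3})$, and hence the total pendant mass tends to $1$, not to $0$. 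Because almost all of the mass lives in pendant subtrees, the TV-type discrepancy $\sum_w |\overline{\nu}_{k,n}(w)-\nu_{k,n}(w)|$ does not vanish, and the chain ``LP $\le$ TV $\to 0$'' breaks down. The LP distance here is small \emph{despite} the TV distance being order $1$.

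The missing ingredient is the exchangeability/Hoeffding argument of Lemma~\ref{lem:abw}, which is what actually makes the LP distance small. The point is that the pendant subtree sizes $(n_{i_j})_{0\le j\le m}$ are exchangeable (a uniform permutation $\sigma$ of the attachment locations leaves the law of $\rT(n)$ invariant, giving $\rT(n,\sigma)\eqdist\rT(n)$), so when you aggregate $\overline{\nu}_{k,n}$ over a small ball $A_{n,i}$ of $v(\rT_k(n))$ you are summing a uniformly-random sample (without replacement) of the $n_{i_j}$'s. Hoeffding's inequality for sampling without replacement then shows that $\overline{\nu}_{k,n}(A_{n,i})$ concentrates near $\nu_{k,n}(A_{n,i})$, with a variance proxy $|\bn_{m,k,n}|_2^2 = \sum_j n_{i_j}^2 \le n\cdot S_{k(n),n}$. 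It is precisely at this point that Lemma~\ref{lem:maxsize} on the maximal pendant size $S_{k(n),n}$ enters; you invoked that lemma, but as a way to control atoms of a (non-existent) small leftover mass, rather than as the $\ell^2$-control on the exchangeable vector that drives the concentration bound. Summing the per-ball concentration over a cover of $v(\rT_{k(n)}(n))$ by $\Theta(k(n)^{13/(12(\ell+1))})$ balls of diameter $\veps_n=k(n)^{-1/(12(\ell+1))}$, and controlling the cover size via the event $F_n$ (using \refL{lem:bd} and \refL{lem:convas}), gives a summable failure probability and hence a.s.\ convergence by Borel--Cantelli. Your outline correctly identifies $\max_w \nu_n(\overline{w})$, \refL{lem:maxsize}, and Borel--Cantelli as players, and correctly names the role of $\ell\geq 2$ as a constraint coming from \refL{lem:maxsize}, but without the exchangeability reduction the argument does not go through.
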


We can now make the discussion above into a precise proof.
\begin{proof}[{\bf Proof of \refP{prop:tight}}]
Fix a sufficiently large $n$ and write $k=k(n)$, until near the end of the proof when we let $n$ vary. It then follows from \refFt{fact:ghp} that
\begin{align}
\dghp\left(\widehat{\bT}(n),\widehat{\bT}_{k}(n)\right)
\le&~ \dghp\left(\widehat{\bT}(n),\overline{\bT}_{k}(n)\right)
+ \dghp\left(\overline{\bT}_{k}(n),\widehat{\bT}_{k}(n)\right)\notag\\
\le&~ D_{k,n} + d_{k,n}(\overline{\nu}_{k,n},\nu_{k,n}).\label{in:diamprok}
\end{align}
Now, we deduce a bound for $D_{k,n}$. Set $\veps=\alpha/4$, and define the event
\[
E_n = \left\{ \sum_{i=k+1}^{\lfloor \frac{n}{\ell}\rfloor+1}k^{\alpha-\veps} \frac{\Delta C_i U_i(n)}{n^\alpha C_i} \le 1\right\}\bigcap \left\{ \max_{k+1\le i\le \lfloor \frac{n}{\ell}\rfloor+1} U_i(n) \le n^\alpha k^{-\alpha+\veps}\right\}.
\]
Choose $u$ uniformly at random from $v\left(\rT(n)\right)\setminus v\left(\rT_{k}(n)\right)$, and take $\lambda(n) =n^{-\alpha}k^{\alpha-\veps}>0$ in \refL{lem:expbd}, noticing that, on the event $E_n$, $\lambda(n)\le \left\{ \max_{k+1\le i\le \lfloor \frac{n}{\ell} \rfloor+1}U_i(n)\right\}^{-1}$. It then follows from Markov inequality and \refL{lem:expbd} that
\begin{align*}
\p{\dgr(u,\rT_{k}(n))\ge n^\alpha k^{-\alpha+2\veps}}
\le&~\p{\dgr(u,\rT_{k}(n))\ge n^\alpha k^{-\alpha+2\veps},~E_n} +\p{E_n^c}\\
\le&~ \frac{\E{\I{E_n}\cdot \E{\exp\left(\lambda(n)\cdot \dgr(u,\rT_{k}(n))\right)~\Big\vert~\cF_{k,n}}}}{\exp\left(\lambda(n) n^\alpha k^{-\alpha+2\veps}\right)} + \p{E_n^c}\\
\le&~ \frac{\exp\left(\lambda(n)\ell + 5\lambda(n)\cdot n^\alpha k^{-\alpha+\veps}\right)}{\exp\left(\lambda(n) \cdot n^\alpha k^{-\alpha+2\veps}\right)} +\p{E_n^c}\\
\le &~\exp\left(\ell+ 5- k^{\veps}\right) + \p{E_n^c}.
\end{align*}
Recall that $\veps=\alpha/4$, so $-\alpha/2=-\alpha+2\veps$. It then follows from a union bound that
\begin{align*}
\p{D_{k,n}\ge c\cdot k^{-\alpha/2}} 
\le &~\E{\sum_{w\in v(\rT(n))\setminus v(\rT_{k}(n))} \p{\dgr(w,\rT_k(n))\ge n^\alpha k^{-\alpha+2\veps}}}\\
\le&~ 2n \cdot \left\{\exp\left(\ell+5- k^{\veps}\right) + \p{E_n^c}\right\}.
\end{align*}
Now we use the notation $k(n)$ and sum over $n$ on both sides of the above inequality:
\[
\sum_{n\in\N}\p{D_{k(n),n}\ge c\cdot k(n)^{-\alpha/2}} 
\le \sum_{n\in\N}2n\cdot \left\{\exp\left(\ell+ 5- k(n)^{\veps}\right) + \p{E_n^c} \right\}.
\]
Since $k(n)=\Omega\left(n^{1/100}\right)$, $\sum_{n\in\N}n\cdot \exp\left(\ell+5- k(n)^{\veps}\right) <\infty$. Moreover, \refL{lem:sum} implies that $\sum_{n\in\N}n\cdot \p{E_n^c}<\infty$. It then follows from the Borel-Cantelli lemma that $D_{k(n),n}\to0$ a.s.. Together with (\ref{in:diamprok}) and \refL{lem:prok}, we may conclude the proof.
\end{proof}
 
We only have left to prove Lemmas~\ref{lem:expbd},~\ref{lem:sum}, and~\ref{lem:prok}, which we do in the forthcoming Sections~\ref{sec:pf},~\ref{sec:branch}, and~\ref{sec:prok}, respectively.

\subsection{Height bound}\label{sec:pf}

In this subsection we prove \refL{lem:expbd}. Recall from \refS{sec:intro} that, $\cT'_k$ is obtained from inserting $\ell$ vertices uniformly over $\cT((k-1)\ell)$, i.e., $\cT'_k$ is $\cT(k\ell)$ without the $(k+1)$:th branch. Now, for all $k\in\N$, let $X_{k}$ be the last inserted vertex of $\cT'_k$; so $X_k$ has the uniform law over $\cT'_k$ with respect to the Lebesgue measure.

Next, we construct a sequence of new {\em embellished} trees $(\cT^\circ_k:k\in\N)$, coupled with a sequence of vertices $(X^\circ_k:k\in\N)$, such that $(\cT^\circ_k,X^\circ_k)\eqdist (\cT'_k,X_k)$. Our construction is a variant of the one in \citep[Section 1.2]{CH}.

\vspace*{.3cm}
\noindent{{\bf A coupling.}}
Let $(W_k,V_k:k\in\N)$ be i.i.d. Uniform$(0,1)$-variables. We construct $\cT^\circ_1$ by (1) inserting $\ell-1$ vertices at uniform points over a branch of length $C_1$; and (2) let $X_1^\circ$ be the point at distance $V_1 C_1$ from a fixed endpoint $X^\circ_0$ of the branch. Given the pairs $(\cT^\circ_i,X^\circ_i)$ for $i=1,\ldots,k$ for some $k\in\N$, we construct $(\cT^\circ_{k+1},X^\circ_{k+1})$ as follows. Note that, before $(\cT^\circ_{k+1},X^\circ_{k+1})$ is constructed, we do not know yet whether to view $X_k^\circ$ as a vertex (in the upcoming case (a)) or just a point (case (b)). The reason to emphasize the difference between vertices and points is to align with the distribution of $\cT'_k$, which is viewed as a union of a real tree and vertices, and the last $\ell$ vertices in $\cT_k'$ each has $1/\ell$ probability of becoming a junction vertex in $\cT(k\ell)$, but a random point has $0$ probability of becoming a juntion. Recall that $\Delta C_{k+1} = C_{k+1} - C_k$.
\begin{itemize}
\item[(a)] If $W_{k+1}\le \frac{\Delta C_{k+1}}{C_{k+1}}$, then let $\cT^\circ_{k+1}$ be obtained from $\cT^\circ_k$ by (1) attaching a branch of length $\Delta C_{k+1}$ to $X^\circ_k$; (2) inserting a vertex, denoted by $X^\circ_{k+1}$, in the latest branch at distance $\Delta C_{k+1} V_{k+1}$ from $X^\circ_k$; and (3) inserting $\ell-1$ vertices at random points of the existing tree, uniform for the Lebesgue measure. We view $X^\circ_k$ as a vertex in this case.
\item[(b)] If $W_{k+1}> \frac{\Delta C_{k+1}}{C_{k+1}}$, then let $\cT^\circ_{k+1}$ be obtained from $\cT^\circ_k$ by (1) inserting a vertex at a random point of $\cT^\circ_k$, uniform for the Lebesgue measure; (2) attaching a branch of length $\Delta C_{k+1}$ to this last inserted vertex of $\cT^\circ_k$; and (3) inserting $\ell-1$ vertices at random points of the existing tree, uniform for the Lebesgue measure. Let $X_{k+1}^\circ = X_k^\circ$, viewed as a random point rather than a vertex.
\end{itemize}
Note that the projection of $X^\circ_j$ to $\cT^\circ_k$ is $X^\circ_k$ for all integers $j\ge k\ge1$.

\begin{lem}\label{lem:unif}
For all $k\in\N$, $X_k^\circ$ and $X_k$ are respectively uniform over $\cT^\circ_k$ and $\cT'_k$ for the Lebesgue measure, and $(\cT^\circ_k,X^\circ_k)\eqdist (\cT'_k,X_k)$.
\end{lem}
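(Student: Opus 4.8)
## Proof plan for Lemma~\ref{lem:unif}

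The plan is to prove both assertions simultaneously by induction on $k$, tracking the joint law of the pair $(\cT^\circ_k, X^\circ_k)$ and comparing it step-by-step to the recursive description of $(\cT'_k, X_k)$ given in Section~\ref{sec:intro}. The base case $k=1$ is immediate: $\cT^\circ_1$ is a length-$C_1$ branch with $\ell-1$ uniform insertions, which is exactly the law of $\cT'_1 = \cT((0)\cdot\ell+\ell-1)$ (the embellished tree just before the second branch is attached), and $X^\circ_1$ sits at distance $V_1 C_1$ from the root endpoint, hence is uniform on the branch by the uniformity of $V_1$; this matches $X_1$, the last vertex inserted into $\cT'_1$, which is uniform for Lebesgue measure.

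For the inductive step, assume $(\cT^\circ_k, X^\circ_k) \eqdist (\cT'_k, X_k)$ with $X^\circ_k$ uniform on $\cT^\circ_k$. The key observation is that attaching the $(k+1)$-th branch of length $\Delta C_{k+1}$ to a \emph{uniform point} of $\cT'_k$ is distributionally the same as the two-case coupling construction: with probability $\Delta C_{k+1}/C_{k+1}$ the uniform attachment point lands on the (length $\Delta C_{k+1}$) portion of the tree that was the last branch — but more precisely, the right way to see this is via the representation from Fact~\ref{fact:repr} and Fact~\ref{fact:dirbeta}. The total length of $\cT'_k$ is $C_{k+1}$ once the new branch is conceptually accounted for; conditioning on whether the uniform point falls in the ``old'' part (length $C_k$) or somewhere that will be freshly created is governed by the ratio $\Delta C_{k+1}/C_{k+1}$, which is exactly the threshold defining cases (a) and (b) via $W_{k+1}$. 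In case (a), the new branch is grown from $X^\circ_k$ (which becomes a genuine vertex, matching the fact that in $\cT(k\ell)$ exactly one of the last $\ell$ embellished vertices of $\cT'_k$ is promoted to the branchpoint $v_{k\ell}$), and $X^\circ_{k+1}$ is placed uniformly on this new branch via $V_{k+1}$; in case (b), a fresh uniform point of $\cT^\circ_k$ is selected to carry the branch and $X^\circ_{k+1} = X^\circ_k$ retains its status as a mere point. After the branch is attached, in both cases $\ell-1$ further uniform insertions are made, which mirrors the $\ell-1$ intermediate embellished-vertex insertions producing $\cT((k)\ell+i)$ for $i=1,\dots,\ell-1$, i.e., $\cT'_{k+1}$. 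One then checks that, conditionally on $\cT^\circ_{k+1}$, the point $X^\circ_{k+1}$ is uniform for Lebesgue measure — in case (b) this is inherited directly from the inductive hypothesis (projection of a uniform point is uniform, and the tree only grew by a set of the correct relative mass), and in case (a) it follows from the uniformity of $V_{k+1}$ on the newly attached branch combined with the size-biased choice of that branch having the right total probability $\Delta C_{k+1}/C_{k+1}$.

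The step I expect to be the main obstacle is verifying precisely that the case-(a)/case-(b) split with threshold $\Delta C_{k+1}/C_{k+1}$ reproduces the law ``attach branch of length $\Delta C_{k+1}$ at a uniformly chosen point of $\cT'_k$, \emph{then} forget which of the last $\ell$ vertices is the junction.'' The subtlety is bookkeeping about \emph{when} a point is declared a vertex versus merely a point: in the target construction $\cT'_k$ has $\ell$ designated ``last'' vertices, one of which becomes $v_{k\ell}$ with probability $1/\ell$ each, whereas in the coupling $X^\circ_k$ is the unique candidate and it is promoted with probability $\Delta C_{k+1}/C_{k+1}$ (case (a)) governed by $W_{k+1}$, not by a $1/\ell$ choice. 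Reconciling these requires carefully unwinding the recursive definition and using that, by Lemma~\ref{lem:frag} and Lemma~\ref{lem:dist}, the labeled-edge-length structure of the embellished trees is exchangeable-Dirichlet in the right way, so that singling out ``the last inserted vertex'' $X_k$ is distributionally consistent with singling out $X^\circ_k$; the $V$-variables then handle the within-branch position and the $W$-variables handle the attach-to-new-branch event, and a short computation confirms the probabilities align. Once this identification is in place, the remaining verifications (uniformity of $X^\circ_k$, equality in law of the trees) are routine consequences of the inductive hypothesis and the elementary fact that a uniform point of a measured space, restricted or projected appropriately, stays uniform.
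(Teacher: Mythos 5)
Your overall plan (induction on $k$, matching the recursive constructions step by step) is the paper's plan too, and your treatment of the uniformity of $X^\circ_k$ — case (a) puts it on the new branch with the right size-biased probability $\Delta C_{k+1}/C_{k+1}$, case (b) leaves it where it was uniform — is exactly the paper's argument for that half of the lemma. The problem is the ``main obstacle'' you flag and the way you try to resolve it.

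You claim the hard part is reconciling ``each of the last $\ell$ vertices of $\cT'_k$ becomes $v_{k\ell}$ with probability $1/\ell$'' with ``$X^\circ_k$ is promoted with probability $\Delta C_{k+1}/C_{k+1}$,'' and you wave at Lemma~\ref{lem:frag}/Dirichlet exchangeability to close the gap. But these two numbers are not competing descriptions of the same event, and no such reconciliation is needed for Lemma~\ref{lem:unif} as stated. The lemma is a statement about the pair $(\cT^\circ_k,X^\circ_k)$ versus $(\cT'_k,X_k)$ at a fixed $k$; it does not assert anything about which vertex will later be the branchpoint. The $1/\ell$ remark in the paper's text is an aside explaining \emph{why} one must be careful to track whether $X^\circ_k$ is a vertex or merely a point (vertices can later become junctions; Lebesgue-random points cannot), not a probability that must numerically match the $\Delta C_{k+1}/C_{k+1}$ threshold. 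The actual proof factors cleanly into two independent inductions: (i) $X^\circ_k$ is uniform on $\cT^\circ_k$ for Lebesgue measure, by the case-(a)/(b) decomposition you already describe; and (ii) $\cT^\circ_k\eqdist \cT'_k$, because in both cases the $(k+1)$:th branch is attached at a Lebesgue-uniform point of $\cT^\circ_k$ (in case (a) it is $X^\circ_k$, which is uniform by (i); in case (b) it is a fresh uniform vertex), and the first new vertex — across cases — lands uniformly on the length-$C_{k+1}$ tree (on the branch with probability $\Delta C_{k+1}/C_{k+1}$, else on $\cT^\circ_k$), with the remaining $\ell-1$ insertions uniform on the full tree. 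Combining (i), (ii), and the analogous uniformity of $X_k$ gives the pair equality. Your sketch never actually supplies part (ii); instead it replaces it with the $1/\ell$ worry, which would not lead anywhere.

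Two smaller factual slips: $\cT'_k$ has total length $C_k$, not ``$C_{k+1}$ once the new branch is conceptually accounted for''; and the threshold $\Delta C_{k+1}/C_{k+1}$ governs where the first new vertex ($X^\circ_{k+1}$ in case (a), the fresh attachment vertex in case (b)) is located, not where the branch's attachment point falls — the attachment point is always in $\cT^\circ_k$. Getting these two points right is what makes the induction for $\cT^\circ_k\eqdist\cT'_k$ go through without any appeal to a $1/\ell$ computation.
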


\begin{proof}
First note that $X_k$, the last inserted vertex of $\cT'_k$, is uniform over $\cT'_k$ for the Lebesgue measure. Next, we show by induction on $k$ that $X^\circ_k$ has the uniform law over $\cT^\circ_k$ for the Lebesgue measure. Base case $k=1$ is trivially verified. Given that $X^\circ_k$ is uniform over $\cT^\circ_k$ for some $k\in\N$, since $X^\circ_{k+1}$ has $\frac{\Delta C_{k+1}}{C_{k+1}}$ probability of landing on a uniform location of branch $k+1$, with the complement probability of being $X^\circ_k$ which is uniform on $\cT^\circ_k$, it is clear that $X^\circ_{k+1}$ has the uniform law over $\cT^\circ_{k+1}$ for the Lebesuge measure. 

Furthermore, it is easily seen that the $(k+1)$:th branch is attached to a uniform point of $\cT^\circ_k$, for the Lebesgue measure, for all $k\in\N$ (step (1) of case (a): $X_k^\circ$ is uniform over $\cT^\circ_k$; step (2) of case (b): the last inserted vertex of $\cT^\circ_k$ is uniform over $\cT^\circ_k$). Moreover, given $\cT^\circ_k$, the first vertex to be inserted has $\frac{\Delta C_{k+1}}{C_{k+1}}$ probability of landing on a uniform location of the $(k+1)$:th branch (step (2) of case (a)), with the complement probability of landing on a uniform point of $\cT^\circ_k$ (step (1) of case (b)). Also, the next $\ell-1$ vertices to be inserted are uniform over the existing tree (step (3) in both cases). We thus have $\cT^\circ_{k+1}\eqdist \cT'_{k+1}$. It follows by induction that $\cT^\circ_k\eqdist \cT'_k$ for all $k\in\N$. Since both $X^\circ_k$ and $X_k$ are respectively uniform over these two trees for the Lebesgue measure, we have $(\cT^\circ_k,X^\circ_k)\eqdist (\cT'_k,X_k)$.
\end{proof}

For ease of notation, fix $k,n\in\N$ with $n\ge k\ell$, and let $m$ be the largest integer such that $n\ge m\ell$. For all integer $1\le i\le m$, given that $W_i\le \frac{\Delta C_i}{C_i}$, write $S_i$ for the path $[X^\circ_{i-1},X^\circ_i)$ in $\cT^\circ_m$, and write $M^\circ(S_i,m)$ for the number of vertices on $S_i$ in $\cT^\circ_m$. Let $\cT^\circ_{k,m}$ (resp. $\cT'_{k,m}$) be the subtree of $\cT^\circ_m$ (resp. $\cT'_m$) spanned by the root and the first $k$ leaves. Denote by $E^\circ$ the event that $X^\circ_m\notin \cT^\circ_{k,m}$, and analogously denote by $E$ the event that $X_m\notin \cT_{k,m}'$. 

Recall from \refS{sec:polya} that $U_i(m\ell)$  is the length of the $i$:th branch in $\cT^\circ_m$  (it can also be viewed as the number of balls of color $i$ at time $m\ell$ in the P\'olya urn model therein). Let $\law$ denote law.
\begin{lem}\label{lem:sumindi}
We have $\law\left(
\dgr\left(X_m,\cT'_{k,m}\right)\bigg\vert E\right) \eqdist \sum_{i=k+1}^m \lceil V_i U_i(m\ell) \rceil \cdot \I{W_{i}\le \frac{\Delta C_{i}}{C_{i}}}$; when $m=k$ the summation is $0$.
\end{lem}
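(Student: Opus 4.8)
The plan is to carry everything out inside the coupling constructed just above the lemma. By \refL{lem:unif} we have $(\cT^\circ_m,X^\circ_m)\eqdist(\cT'_m,X_m)$ together with the whole construction, so it suffices to prove the identity with $(\cT'_m,X_m,\cT'_{k,m},E)$ replaced by $(\cT^\circ_m,X^\circ_m,\cT^\circ_{k,m},E^\circ)$. First I would record the reduction $E^\circ=\{W_i\le \Delta C_i/C_i \text{ for some } i\in\{k+1,\dots,m\}\}$ (so that $E^\circ$ is measurable with respect to the $W_i$'s only): if no such $i$ exists then $X^\circ_m=X^\circ_k\in\cT^\circ_k\subseteq\cT^\circ_{k,m}$ by construction, so $E^\circ$ fails; conversely, if $i_1<\dots<i_r$ ($r\ge1$) are the ``case~(a)'' steps in $\{k+1,\dots,m\}$, then $X^\circ_m=X^\circ_{i_r}$ lies strictly inside the branch created at step $i_r$ (as $V_{i_r}\in(0,1)$ a.s.), and that branch is pendant to $\cT^\circ_{k,m}$ because it leads to a leaf of index exceeding $k$ and no branch created before step $i_r$ can be attached to it; hence $X^\circ_m\notin\cT^\circ_{k,m}$. (When $m=k$ there are no such steps, $E^\circ=\emptyset$, and the statement is vacuous, matching the empty sum.)

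Next I would decompose the geodesic from $X^\circ_m$ to $\cT^\circ_{k,m}$ on the event $E^\circ$. Using that the projection of $X^\circ_j$ onto $\cT^\circ_k$ is always $X^\circ_k$ (recorded just before \refL{lem:unif}) and that $\cT^\circ_{k,m}$ has the same underlying real tree as $\cT^\circ_k$, the point of $\cT^\circ_{k,m}$ nearest $X^\circ_m$ is $X^\circ_k$, which on $E^\circ$ is a genuine branchpoint-vertex of $\cT^\circ_m$. Since the location $X^\circ_\cdot$ moves only at case~(a) steps, and at step $i_j$ the branch created then is attached to $X^\circ_{i_j-1}=X^\circ_{i_{j-1}}$ (with $X^\circ_{i_0}:=X^\circ_k$), the geodesic is the concatenation over $j=r,r-1,\dots,1$ of the ``lower portion'' of the step-$i_j$ branch running from $X^\circ_{i_j}$ back to its attachment point $X^\circ_{i_{j-1}}$. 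Writing $Y_i$ for the number of edges in that lower portion of the step-$i$ branch when $i$ is a case~(a) step, and $Y_i:=0$ otherwise, this yields
\[
\dgr\bigl(X^\circ_m,\cT^\circ_{k,m}\bigr)=\sum_{i=k+1}^{m}Y_i\qquad\text{on }E^\circ.
\]

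It then remains to show that, conditionally on $E^\circ$, the vector $(Y_i)_{i=k+1}^m$ has the same law as $\bigl(\lceil V_iU_i(m\ell)\rceil\,\I{W_i\le\Delta C_i/C_i}\bigr)_{i=k+1}^m$. Here I would condition on the $C_i$'s, the $W_i$'s (hence on the set of case~(a) steps and on $E^\circ$), and on the edge-count $U_i(m\ell)$ of each step-$i$ branch, and argue one branch at a time. The point is that $X^\circ_i$ is placed at an independent fractional length $V_i\sim\mathrm{Uniform}(0,1)$ along the step-$i$ branch, while the branch is subdivided by inserting vertices at uniform length positions; by the Dirichlet/beta--gamma algebra underlying \refL{lem:dircond} (and the binomial structure of \refFt{fact:bin}), the number of edges on the attachment-point side of $X^\circ_i$ then has exactly the law one gets from the Pólya-urn splitting of a $U_i(m\ell)$-edge branch at a uniformly chosen rank, namely the law of $\lceil V_iU_i(m\ell)\rceil$; moreover distinct branches are subdivided conditionally independently given their edge-counts, matching the independence of the $V_i$'s, and the contributions of non--case~(a) steps drop out on both sides. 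Combining this with the displayed decomposition gives the lemma. The hard part is precisely this last distributional step: it requires carefully tracking at which stage $X^\circ_i$ is inserted relative to the growth of its branch and invoking the exchangeability of the branch's edges (all produced by uniform splits) to translate the continuous position $V_i$ into the discrete count $\lceil V_iU_i(m\ell)\rceil$ — this is where the P\'olya-urn description of \refS{sec:polya} and \refFt{fact:bin} are essential.
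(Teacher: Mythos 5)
Your proof mirrors the paper's argument step for step: reduce to the coupled embellished trees via \refL{lem:unif}, decompose the geodesic from $X^\circ_m$ to $\cT^\circ_{k,m}$ into the segments $S_i=[X^\circ_{i-1},X^\circ_i)$ over the case-(a) steps, and then assert that the vector $\bigl(M^\circ(S_i,m)\,\I{W_i\le\Delta C_i/C_i}\bigr)_i$ has the same law as $\bigl(\lceil V_i U_i(m\ell)\rceil\,\I{W_i\le\Delta C_i/C_i}\bigr)_i$. Your elaboration that $E^\circ$ is $W$-measurable and your description of the geodesic as a concatenation of lower portions of case-(a) branches are correct and more explicit than the paper's one-line version of the same facts. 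Like the paper, you leave the final distributional identification essentially unproved.

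That final step is where a genuine gap remains, and in fact it looks off by one in both your write-up and the paper's statement. Branch $i$ of $\cT^\circ_m$ has $U_i(m\ell)$ edges, hence $U_i(m\ell)-1$ interior vertices, of which $X^\circ_i$ is one. Given $U_i(m\ell)$ and the indicator, the other $U_i(m\ell)-2$ interior vertices lie at i.i.d.\ uniform fractional positions independent of $V_i$, so the edge count from the attachment point to $X^\circ_i$ is $1+\mathrm{Binomial}\bigl(U_i(m\ell)-2,V_i\bigr)$, which marginally over $V_i$ is uniform on $\{1,\ldots,U_i(m\ell)-1\}$ — exactly the ``split at a uniformly chosen interior vertex'' law you describe in words. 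But $\lceil V_i U_i(m\ell)\rceil$ is uniform on $\{1,\ldots,U_i(m\ell)\}$, so the two laws differ. A concrete check: take $\ell=1$, $k=1$, $m=2$; on $E$ one always has $\dgr(X_2,\cT'_{1,2})=1$, whereas $\lceil V_2 U_2(2)\rceil=\lceil 2V_2\rceil\in\{1,2\}$. So the displayed $\eqdist$ cannot hold as stated; what does hold is stochastic domination $\stackrel{st}{\leq}$, and that is all \refL{lem:expbd} actually uses (it immediately upper-bounds $\lceil V_iU_i(m\ell)\rceil\le 1+V_iU_i(m\ell)$). Your proof should either establish the precise (one-smaller) law $\mathrm{Uniform}\{1,\ldots,U_i(m\ell)-1\}$ via the exchangeability argument you sketch, or replace the equality by a domination and note that this is sufficient for the downstream application.
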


\begin{proof}
Without loss of generality, assume that $m>k$. It follows from \refL{lem:unif} and the constructions of $\cT'_{k,m}$ and $\cT^\circ_{k,m}$ that 
\begin{equation}\label{eq:distance0}
\law\left(\dgr\left(X_m,\cT'_{k,m}\right)\big\vert E\right)\eqdist \law\left(\dgr\left(X^\circ_m,\cT^\circ_{k,m} \right)\big\vert E^\circ\right)
\end{equation}
Moreover, it follows from the construction of $\cT^\circ_{k,m}$ that
\begin{equation}
\label{eq:distance}
\law\left(\dgr\left(X^\circ_m,\cT^\circ_{k,m} \right)\big\vert E^\circ\right) = \sum_{i=k+1}^m M^\circ(S_i,m)\cdot \I{W_{i}\le \frac{\Delta C_{i}}{C_{i}}}.
\end{equation}
Now, due to the definition of $U_i(\cdot)$ and that the $X_i^\circ$ are placed uniformly according to normalized Lebesgue measure,
we have 
\[
\left(\I{W_{i}\le \frac{\Delta C_{i}}{C_{i}}} M^\circ(S_i,m)\right)_{i=k+1}^m \eqdist \left(\I{W_{i}\le \frac{\Delta C_{i}}{C_{i}}}\lceil V_i U_i(m\ell) \rceil\right)_{i=k+1}^m.
\]
Together with (\ref{eq:distance0}) and (\ref{eq:distance}) we may conclude the proof.
\end{proof}

\begin{proof}[{\bf Proof of \refL{lem:expbd}}]
This proof is an easy generalization of the argument in \citep[Section 1.2]{CH}. Note that $\lambda \cdot U_i(m\ell)\le 1$ for all $k+1\le i\le m$. Applying \refL{lem:sumindi} and using the bound $e^x \le 1+x+x^2$ for $0\le x\le 1$, we have for $\lambda>0$,
\begin{align*}
\E{\exp\left(\lambda \cdot \dgr(X_m,\cT'_{k,m}) \right)~\Big\vert~E,\cF_{k,n}} 
\le&~ 
\prod_{i=k+1}^m \left(\frac{C_{i-1}}{C_i} + \frac{C_i-C_{i-1}}{C_i}
\cdot \E{\exp(\lambda +\lambda \cdot  V_i U_i(m\ell) )}\right)\\
\le&~\prod_{i=k+1}^m \left( 1- \frac{\Delta C_{i}}{C_i} + \frac{\Delta C_i}{C_i}\cdot e^\lambda \cdot (1+ \lambda \cdot U_i(m\ell)) \right)\\
=&~ \prod_{i=k+1}^m \left(1+ \frac{\Delta C_i}{C_i}\cdot (e^\lambda-1) + \lambda e^\lambda \cdot \frac{\Delta C_i U_i(m\ell)}{C_i}\right).
\end{align*}
Notice that $0\le \lambda\le 1$, so $e^\lambda-1\le \lambda+\lambda^2\le 2\lambda \le 2\lambda U_i(m\ell)$ and $\lambda e^\lambda\le 3\lambda$. Thus, the above quantity is bounded by
\begin{align*}
\prod_{i=k+1}^m \left(1+ \frac{5\lambda \Delta C_i U_i(m\ell)}{C_i}\right)
\le \exp\left(5 \lambda \sum_{i=k+1}^m \frac{\Delta C_i U_i(m\ell)}{C_i}\right).
\end{align*}
Finally, for a tree $T$, write $e(T)$ for the set of edges of $T$. Given the event $E$, it follows from \refL{lem:frag} and the first assertion of \refL{lem:unif} that $X_m$ is on an edge uniformly chosen from $e(\cT'_m) \setminus e(\cT'_{k,m})$. Now, recall that $u$ is uniform over $v(\rT(n))\setminus v(\rT_k(n))$, where $\rT(n) =(\cT(n),\dgr)$. Since $n<m\ell+\ell$, it follows that
that
\[
\law\left(\dgr\left(u,\rT_k(n) \right)~\Big\vert~ \cF_{k,n}\right)
\stackrel{st}{\leq} \law\left( \ell + \dgr\left(X_m,\cT'_{k,m}\right)~\Big\vert~E,\cF_{k,n}\right),
\]
where~$\stackrel{st}{\leq}$ denotes stochastic domination. The lemma easily follows.
\end{proof}

\subsection{Moment bound for P\'olya urn}\label{sec:branch}

In this subsection, we prove \refL{lem:sum} under the framework of \refS{sec:polya}. Denote by $\bPclas{b}{w}{m}$ the distribution of white balls in a classical P\'olya urn after $m$ completed draws, starting with $b$ black and $w$ white balls. 
Denote by $\bPimm{\ell}{b}{w}{m}$ the number of white balls after $m$ completed steps in the P\'olya urn with immigration, starting with $b$ black and $w$ white balls: at the $n$th step, a ball is picked at random from the urn and returned along with an additional ball of the same color;
additionally, if~$n$ is a multiple of~$\ell$, then a black ball is added \emph{after} the~$n$:th draw and return. We use the notation $\law(\cdot)$ to denote the law of some random variable.

\begin{lem}{\em (\cite[Lemma 2.2 with $s=m_s=1$]{PRR}).}\label{lem:conditurns}
For all $k,n\in\N$ with $n\ge k\ell$,
\[
	M_k (n) \sim \bbPimm{\ell}{1}{(\ell+1)k}{n-k\ell }
\]
and
\begin{equation}\label{eq:ucondm}
\law\bklr{U_k(n)|M_k(n)}=\bPclas{(k-1)(\ell+1)}{1}{M_k(n)-(k-1)(\ell+1)-1}.
\end{equation}
\end{lem}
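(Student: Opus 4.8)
The plan is to reduce each of the two assertions to a statement about a \emph{two}-colour urn by merging colours in the infinite-colours urn of \refS{sec:polya}.

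For the first assertion, the natural move is to run time from $k\ell$ onward and merge colours $\{1,\dots,k\}$ into a single ``white'' class and all colours $>k$ into a ``black'' class. At time $k\ell$ the urn has $(\ell+1)k+1$ balls among colours $1,\dots,k+1$, of which colour $k+1$ (just created) contributes exactly one; so there are $(\ell+1)k$ white and $1$ black ball then, and $M_k(k\ell)=(\ell+1)k$. I would then observe that for a step $j>k\ell$ a draw duplicates a uniform ball, which is white with probability $(\text{white count})/(\text{total})$, independently of the colour composition within the white class, so the white/black counts evolve exactly like a classical two-colour P\'olya urn; and that every colour born after time $k\ell$ exceeds $k$, hence is a fresh black ball, added \emph{after} the draw, precisely at the steps $j\equiv0\pmod\ell$. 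Relabelling $t=j-k\ell$ turns these immigration epochs into the multiples of $\ell$, and the white count at time $n$, which is $M_k(n)$, is then exactly $\Pimm{\ell}{1}{(\ell+1)k}{n-k\ell}$.

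For the second assertion I would fix $k$, restrict attention to the white balls (colours $1,\dots,k$) from time $(k-1)\ell$ on, and split them into ``special'' (colour $k$) versus ``other'' (colours $1,\dots,k-1$). At time $(k-1)\ell$ colour $k$ has just appeared, so it has one ball while colours $1,\dots,k-1$ carry $M_{k-1}((k-1)\ell)=(\ell+1)(k-1)$ balls, and no colour $\le k$ is ever created after that. The point to make is that $U_k$ changes only when $M_k$ increases (a white ball is duplicated), and at such a moment the duplicated ball has colour $k$ with probability $U_k/M_k$ (current values), since conditional on drawing a white ball it is uniform among the $M_k$ whites. Hence, conditionally on the entire trajectory $(M_k(j))_{j\ge(k-1)\ell}$ --- in particular on the epochs where $M_k$ jumps --- the values of $U_k$ at those epochs form a classical P\'olya urn started from $(k-1)(\ell+1)$ ``other'' and $1$ ``special'' ball; and since colours $>k$ never affect $M_k$, the number of such jumps up to time $n$ is $M_k(n)-(k-1)(\ell+1)-1$. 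Because the terminal law of a classical P\'olya urn depends only on the number of draws, the conditional law of $U_k(n)$ given the whole $M_k$-trajectory, hence given $M_k(n)$ alone, is $\Pclas{(k-1)(\ell+1)}{1}{M_k(n)-(k-1)(\ell+1)-1}$, which is \eqref{eq:ucondm}.

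The only genuinely delicate step is this last conditioning: one must be sure that conditioning on the $M_k$-trajectory does not bias the within-white colour dynamics. The ``$U_k/M_k$'' remark settles this once one notes that the future jumps of $M_k$ are driven only by the white/black split and are therefore conditionally independent of which white ball was duplicated; I would make this rigorous by a fixed coupling --- represent each draw by an independent uniform variable, with all white balls listed before the black ones --- which is routine. The base case $n=k\ell$ (zero immigration steps in the first part; $\ell$ P\'olya draws in the second) is consistent and can be checked directly.
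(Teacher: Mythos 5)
Your proposal is correct, and it fills a gap the paper leaves open: the paper does not prove this lemma but instead cites \cite[Lemma~2.2 with $s=m_s=1$]{PRR}, so there is no internal argument to compare against. Both halves of your merging argument check out. For the first part, at time $k\ell$ the urn has $(\ell+1)k$ balls of colours $\leq k$ and one ball of the freshly created colour $k+1$; merging colours into ``$\leq k$'' vs.\ ``$>k$'' collapses the process to a two-colour P\'olya urn in which black immigration occurs exactly at the steps $t\equiv 0\pmod\ell$ after reparametrising $t=n-k\ell$, giving $M_k(n)\sim\bPimm{\ell}{1}{(\ell+1)k}{n-k\ell}$. For the second part, your bookkeeping is right: $M_k((k-1)\ell)=(k-1)(\ell+1)+1$, so the number of $M_k$-jumps between times $(k-1)\ell$ and $n$ is $M_k(n)-(k-1)(\ell+1)-1$, and conditionally on the jump epochs each jump duplicates a uniform ball from the colour-$\leq k$ class, so the ``colour $k$ vs.\ colours $<k$'' split is a classical P\'olya urn seeded by $\bigl((k-1)(\ell+1),\,1\bigr)$. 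The one genuinely delicate point you flagged --- that conditioning on the $M_k$-trajectory does not bias the within-class dynamics --- is handled correctly by the observation that future $M_k$-jumps depend only on the white/black split and not on which white ball was drawn; the uniform-variable coupling you sketch is the standard way to make this precise and is indeed routine. Your argument would serve as a short, self-contained replacement for the external citation.
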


\begin{lem} \label{lem3}
Fix $k,n,q\in\N$ with $n\ge k\ell$. There is positive constant $c$ depending only on $q,\ell$ such that
\begin{equation}\label{10}
\E{ M_k(n)(M_k(n)+1)\cdots (M_k(n)+q(\ell+1)-1)}
\leq c k^q n^{q\ell}.
\end{equation}
\end{lem}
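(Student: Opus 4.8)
The plan is to reduce \refL{lem3} to a statement about the infinite-colour P\'olya urn of \refS{sec:polya} and then compute via a one-step recursion on rising-factorial moments. First I would invoke \refL{lem:conditurns}, by which $M_k(n)$ has the law of the number $W_m$ of white balls after $m:=n-k\ell$ draws in a P\'olya urn with immigration started from one black and $w_0:=(\ell+1)k$ white balls. The total number of balls after $j$ draws is then $N_j=1+(\ell+1)k+j+\lfloor j/\ell\rfloor$, since each draw adds one ball and an extra (black) ball is added after every draw whose index is a multiple of $\ell$. Writing $p:=q(\ell+1)$ and, for an integer $x\ge 0$, $x^{(p)}:=x(x+1)\cdots(x+p-1)$ for the rising factorial, the left-hand side of \eqref{10} is exactly $\E{W_m^{(p)}}$.

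The heart of the argument is the exact identity
\[
\E{W_{j+1}^{(p)}\,\big|\,\cF_j}=\Bigl(1+\tfrac{p}{N_j}\Bigr)W_j^{(p)},
\]
where $\cF_j$ is the $\sigma$-field generated by the first $j$ draws. This holds because, conditionally on $\cF_j$, one has $W_{j+1}=W_j$ with probability $1-W_j/N_j$ and $W_{j+1}=W_j+1$ with probability $W_j/N_j$ (any immigration at step $j+1$ changes only $N_{j+1}$, not this transition), so one needs only the elementary identities $(x+1)^{(p)}-x^{(p)}=p\,(x+1)^{(p-1)}$ and $x\,(x+1)^{(p-1)}=x^{(p)}$. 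Iterating over $j=0,\dots,m-1$ then yields the closed form
\[
\E{M_k(n)^{(p)}}=w_0^{(p)}\prod_{j=0}^{m-1}\Bigl(1+\tfrac{p}{N_j}\Bigr),
\]
the empty product being $1$ when $m=0$.

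It remains to bound the two factors. For the first, $w_0^{(p)}=\bigl((\ell+1)k\bigr)^{(p)}\le\bigl((\ell+1)(k+q)\bigr)^{p}\le[(\ell+1)(q+1)]^{p}\,k^{p}$, i.e.\ a constant depending only on $q,\ell$ times $k^{q(\ell+1)}$. For the product I would use $1+x\le e^{x}$ together with the bound $N_j\ge\tfrac{\ell+1}{\ell}(\ell k+j)$ (valid since $1+\lfloor j/\ell\rfloor\ge j/\ell$) to get
\[
\sum_{j=0}^{m-1}\frac{p}{N_j}\le\frac{p\ell}{\ell+1}\sum_{j=0}^{m-1}\frac{1}{\ell k+j}\le q\ell\Bigl(1+\log\frac{n}{\ell k}\Bigr),
\]
using $p\ell/(\ell+1)=q\ell$, $\ell k\ge 1$, and the identity $\ell k+m=n$. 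Hence the product is at most $e^{q\ell}\bigl(n/(\ell k)\bigr)^{q\ell}$, and multiplying the two estimates gives $\E{M_k(n)^{(p)}}\le c\,k^{q(\ell+1)-q\ell}n^{q\ell}=c\,k^{q}n^{q\ell}$ with, for instance, $c=[(\ell+1)(q+1)]^{q(\ell+1)}e^{q\ell}\ell^{-q\ell}$, which depends only on $q$ and $\ell$.

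The only genuinely delicate point is to retain the $\lfloor j/\ell\rfloor$ term in $N_j$: discarding it lets the product telescope but yields only the weaker bound $O(n^{q(\ell+1)})$; it is exactly the immigration mechanism that makes the urn grow fast enough to trade one power of $n$ for a power of $k$. Everything else is routine bookkeeping.
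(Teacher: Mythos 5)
Your proof is correct. It shares the paper's skeleton: identify $M_k(n)$ with the white-ball count of the immigration urn via \refL{lem:conditurns}, obtain the exact product formula $\E{M_k(n)^{(p)}}=w_0^{(p)}\prod_{j=0}^{m-1}\bigl(1+\tfrac{p}{N_j}\bigr)$ for the rising-factorial moment, and then bound the product. The differences are twofold. First, the paper simply cites \cite[Lemma~4.1]{PRR} for that product formula, whereas you re-derive it from the one-step recursion $\E{W_{j+1}^{(p)}\mid\cF_j}=(1+p/N_j)W_j^{(p)}$; your derivation is correct (the rising-factorial identities and the fact that $N_j$ is deterministic are exactly what is needed), and it makes the lemma self-contained. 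Second, the paper bounds the product by an exact telescoping over complete immigration blocks (splitting $i=r\ell+i'$ and cancelling in steps of $q$ blocks), which yields the factorization $w^{(p)}\mapsto$ (a factor of size $k^q$) $\times$ (a factor of size $n^{q\ell}$) directly; you instead keep $w_0^{(p)}\le C k^{q(\ell+1)}$ and recover the missing $k^{-q\ell}$ from the product via $1+x\le e^{x}$, the lower bound $N_j\ge\tfrac{\ell+1}{\ell}(\ell k+j)$, and a harmonic-sum/integral estimate giving $\prod_j(1+p/N_j)\le e^{q\ell}\bigl(n/(\ell k)\bigr)^{q\ell}$. Both routes give $c\,k^q n^{q\ell}$ with $c=c(q,\ell)$; the telescoping gives somewhat cleaner explicit constants, while your logarithmic bound is more elementary and, as you note, correctly exploits the $\floor{j/\ell}$ immigration term, without which one would only get $O(n^{q(\ell+1)})$.
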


\begin{proof} 
From \cite[Lemma~4.1]{PRR}, for $Y\sim \bPimm{\ell}{1}{w}{t}$ and integer $q>0$, 
\be{
	\E{Y(Y+1)\cdots (Y+q(\ell+1)-1)}
	=\prod_{j=0}^{q(\ell+1)-1} (w+j) \prod_{i=0}^{t-1}\left(1+\frac{q(\ell+1)}{w+1+i+\lfloor i/\ell\rfloor}\right).
}
Setting $T = \floor{\frac{t-1}{\ell}}$, we calculate
\begin{align*}
&\E{Y(Y+1)\cdots (Y+q(\ell+1)-1)}\\
=&~\prod_{j=0}^{q(\ell+1)-1} (w+j) \prod_{i=0}^{t-1}\left(1+\frac{q(\ell+1)}{w+1+i+\lfloor i/\ell\rfloor}\right)\\
=&~\prod_{j=0}^{q(\ell+1)-1} (w+j) \prod_{r=0}^{T -1}\prod_{i=0}^{\ell-1} \left(1+\frac{q(\ell+1)}{w+1+i+r(\ell+1)}\right)
	 \prod_{p=\ell T}^{t-1}\left(1+\frac{q(\ell+1)}{w+1+p+T}\right)  \\
=&~w\prod_{j=0}^{q-2}(w+1+\ell+j(\ell+1)) \prod_{r=T}^{T+q-1} \prod_{i=0}^{\ell-1}(w+1+i +r(\ell+1))
	 \prod_{p=\ell T}^{t-1}\left(1+\frac{q(\ell+1)}{w+1+p+T}\right)  \\
\le&~(w+(q-1)(\ell+1))^{q}(w+q(\ell+1)+T(\ell+1))^{q\ell}
			\left(1+\frac{q(\ell+1)}{w+1+T(\ell+1)}\right)^\ell.
\end{align*}
Now setting~$w=(\ell+1)k$ and $t=n-k\ell $
and noting that with this choice of parameters,
\be{
w+T(\ell+1)\leq 1+(n-1)\frac{\ell+1}{\ell}+\ell,
}
we find
\begin{align*}
&\E{ M_k(n)(M_k(n)+1)\cdots (M_k(n)+q(\ell+1)-1)}   \\
\le&~ \left((\ell+1)(k-s+q)\right)^q\left(1+q(\ell+1)+\ell+n\frac{\ell+1}{\ell}\right)^{q\ell} \left(1+\frac{q(\ell+1)}{1+(n-1)\frac{\ell+1}{\ell}}\right)^\ell.\qedhere
\end{align*}
\end{proof}

\begin{lem}\label{lem:mkmombd}
Fix $k,n,q\in\N$ with $n\ge k\ell$. There is a constant $c=c(q,\ell)$ such that for all positive integer $j\leq q(\ell+1)$,
\be{
\E{ M_k(n)^j} \leq c k^{j/(\ell+1)} n^{j\ell/(\ell+1)}.
}
\end{lem}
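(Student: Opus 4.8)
The plan is to deduce the raw moment bound from the rising-factorial bound of Lemma~\ref{lem3} together with a classical comparison between ordinary monomials and rising factorials, and then optimize over the choice of $q$. First I would fix $j \le q(\ell+1)$ and observe that, since $M_k(n) \ge 1$ always, we have the elementary inequality $M_k(n)^j \le M_k(n)(M_k(n)+1)\cdots(M_k(n)+j-1)$, because each factor on the right is at least $M_k(n)$. Next, extending the product up to $q(\ell+1)-1$ factors only increases it (again every extra factor is $\ge 1$), so
\[
M_k(n)^j \le M_k(n)(M_k(n)+1)\cdots\bklr{M_k(n)+q(\ell+1)-1}.
\]
Taking expectations and applying Lemma~\ref{lem3} gives $\E{M_k(n)^j} \le c(q,\ell)\, k^q n^{q\ell}$.

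The remaining point is to pass from the exponent $q$ on $k$ (and $q\ell$ on $n$) to the claimed $j/(\ell+1)$ (resp.\ $j\ell/(\ell+1)$). Given $j$, choose $q = \lceil j/(\ell+1)\rceil$, which is the smallest integer with $q(\ell+1) \ge j$, so the hypothesis of the displayed inequality is met. Then $q \le j/(\ell+1) + 1$, and since $k \ge 1$ and $n \ge 1$,
\[
k^q n^{q\ell} \le k^{j/(\ell+1)+1} n^{(j/(\ell+1)+1)\ell} = k\, n^{\ell}\cdot k^{j/(\ell+1)} n^{j\ell/(\ell+1)}.
\]
This would introduce an unwanted factor $k n^\ell$, so a cruder bound is not quite enough; instead I would absorb it by noting $k \le n$ (as $n \ge k\ell \ge k$) and $n^\ell \le n^{q\ell}$, but that reintroduces powers of $n$. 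The cleaner route is simply to keep $q = q(j,\ell) := \lceil j/(\ell+1)\rceil$ as a fixed function of $j$ and $\ell$, let the constant $c = c(q(j,\ell),\ell)$ depend on $j$ and $\ell$ through $q$, and bound $k^q \le k^{j/(\ell+1)}\cdot k^{q - j/(\ell+1)}$ where the exponent $q - j/(\ell+1) \in [0,1)$; since we only claim a bound valid for $j \le q(\ell+1)$ with the constant allowed to depend on $q$ and $\ell$, and since $q$ ranges over a finite set once $q(\ell+1)$ is fixed, we may take the maximum over the finitely many relevant $j$ of the ratio and fold it into $c$. Concretely: for each such $j$, $\E{M_k(n)^j} \le c(q,\ell) k^q n^{q\ell}$ and $q = j/(\ell+1) + \theta_j$ with $\theta_j \in [0,1)$, so $k^q n^{q\ell} = (k n^\ell)^{\theta_j} k^{j/(\ell+1)} n^{j\ell/(\ell+1)}$; this does not simplify unless one is willing to pay $(kn^\ell)^{\theta_j}$.

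The honest fix, and the one I would actually write, is to observe that the statement only asserts existence of \emph{some} constant $c(q,\ell)$ with the bound holding for \emph{all} $j \le q(\ell+1)$ — so $q$ is given first. For such $j$, $M_k(n)^j \le \bklr{M_k(n)(M_k(n)+1)\cdots(M_k(n)+q(\ell+1)-1)}^{j/(q(\ell+1))}$ by the power-mean / Jensen inequality applied to the map $x \mapsto x^{j/(q(\ell+1))}$, which is concave since $j/(q(\ell+1)) \le 1$; hence by Jensen's inequality for expectations,
\[
\E{M_k(n)^j} \le \E{M_k(n)(M_k(n)+1)\cdots\bklr{M_k(n)+q(\ell+1)-1}}^{\,j/(q(\ell+1))} \le \bklr{c k^q n^{q\ell}}^{j/(q(\ell+1))} = c'\, k^{j/(\ell+1)} n^{j\ell/(\ell+1)},
\]
with $c' = c^{j/(q(\ell+1))} \le \max\{1,c\}$, using Lemma~\ref{lem3} in the middle step. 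The main (and only) subtlety is this concavity step: one must apply Jensen to the random variable $M_k(n)(M_k(n)+1)\cdots(M_k(n)+q(\ell+1)-1)$ with the concave function $x\mapsto x^{j/(q(\ell+1))}$, which is valid precisely because $0 < j/(q(\ell+1)) \le 1$; everything else is bookkeeping. This completes the proof.
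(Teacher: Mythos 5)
Your final ``honest fix'' is correct and amounts to the same argument as the paper: the paper applies Jensen/H\"older in the form $\E{M_k(n)^j}\le\big(\E{M_k(n)^{q(\ell+1)}}\big)^{j/(q(\ell+1))}$, then uses $M_k(n)^{q(\ell+1)}\le M_k(n)(M_k(n)+1)\cdots(M_k(n)+q(\ell+1)-1)$ pointwise (valid since $M_k(n)\ge1$) and Lemma~\ref{lem3}, whereas you first make the pointwise comparison and then apply Jensen with the concave map $x\mapsto x^{j/(q(\ell+1))}$; these two orderings give identical bounds. The preliminary detours in your write-up (the naive extension to the full rising factorial, and the attempt to optimize $q=\lceil j/(\ell+1)\rceil$) are correctly diagnosed as insufficient, and should simply be cut; only the concluding paragraph is needed.
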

\begin{proof}
For $j\leq q(\ell+1)$,
Jensen's (or H\"older's) inequality implies
\ben{\label{eq:jensenmom}
\E{ M_k(n)^j}\leq \left(\E{ M_k(n)^{q(\ell+1)}}\right)^{j/(q(\ell+1))}.
}
Using~\eq{10} of Lemma~\ref{lem3} now implies
\be{
\E{M_k(n)(M_k(n)+1)\cdots (M_k(n)+q(\ell+1)-1)}\leq c k^q n^{q \ell},
}
and the result for $j\le q(\ell+1)$ easily follows from this and~\eq{eq:jensenmom}.
\end{proof}

\begin{lem}\label{lem:urnmombd}
Fix $k,n,p\in\N$ with $n\ge k\ell$. There is a constant $c=c(p,\ell)$ such that
\[
\E{U_k(n)^p} \le c \left(\frac{n}{k}\right)^{p\ell/(\ell+1)}
\]
and 
\[
\E{\left(\frac{\Delta C_k U_k(n)}{C_k}\right)^{p}} \le c n^{p\ell/(\ell+1)} k^{-p(2\ell+1)/(\ell+1)}.
\]
\end{lem}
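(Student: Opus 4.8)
The plan is to prove the two displayed bounds separately. The first one contains all the substance; the second then follows by combining it with beta--gamma algebra.

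\emph{First bound.} The guiding idea is that branch $k$ accounts for only a $\Theta(1/k)$ fraction of $\rT_k(n)$, so $U_k(n)$ ought to be smaller than $M_k(n)$ by a factor of order $k$ in an $L^p$ sense; since $\E{M_k(n)^p}$ is already controlled by \refL{lem:mkmombd}, this yields the claim. Concretely, I would first invoke \refL{lem:conditurns}, namely \eqref{eq:ucondm}, which says that conditionally on $M_k(n)$ the variable $U_k(n)$ is the white-ball count of a classical P\'olya urn started from $(k-1)(\ell+1)$ black and one white ball and run for $M_k(n)-(k-1)(\ell+1)-1$ steps (a quantity which is $\geq 0$, equalling $0$ precisely when $n=(k-1)\ell$). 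For such an urn, writing $x^{(q)}\coloneqq x(x+1)\cdots(x+q-1)$, the white count $Z$ after $m$ draws from $b$ black and $w$ white balls satisfies the standard identity $\E{Z^{(q)}}=w^{(q)}(b+w+m)^{(q)}/(b+w)^{(q)}$ (obtained e.g.\ from the martingale $Z_t^{(q)}/(b+w+t)^{(q)}$, or directly from the P\'olya--Eggenberger probabilities). Specializing to $w=1$, $b=(k-1)(\ell+1)$, $m=M_k(n)-b-1$ (so $b+w+m=M_k(n)$), and using $\bklr{(k-1)(\ell+1)+1}^{(q)}\geq\bklr{(k-1)(\ell+1)+1}^{q}\geq k^{q}$ together with $M_k(n)^{(q)}\leq(M_k(n)+q)^{q}$, I obtain
\[
\E{U_k(n)^{(q)}\mid M_k(n)}=\frac{q!\,M_k(n)^{(q)}}{\bklr{(k-1)(\ell+1)+1}^{(q)}}\leq\frac{q!\,(M_k(n)+q)^{q}}{k^{q}}.
\]
Then I would take expectations, bound $(M_k(n)+q)^{q}\leq 2^{q-1}(M_k(n)^{q}+q^{q})$, and apply \refL{lem:mkmombd} (using $k^{q/(\ell+1)}n^{q\ell/(\ell+1)}\geq 1$) to get $\E{U_k(n)^{(q)}}\leq c(q,\ell)\,k^{q/(\ell+1)-q}n^{q\ell/(\ell+1)}=c(q,\ell)(n/k)^{q\ell/(\ell+1)}$, since $q/(\ell+1)-q=-q\ell/(\ell+1)$. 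Finally, because every factor of the rising factorial is $\geq U_k(n)$, we have $U_k(n)^{p}\leq U_k(n)^{(p)}$, so the first bound follows with $q=p$; the same argument in fact gives $\E{U_k(n)^{p}}\leq c(p,\ell)(n/k)^{p\ell/(\ell+1)}$ for every $p\in\N$.

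\emph{Second bound.} By \refFt{fact:repr} and \eqref{eq:repr}, for $k\geq 2$ we have $C_{k-1}/C_k=B_{k-1}\sim\mathrm{Beta}((\ell+1)(k-1),1)$, hence $\Delta C_k/C_k=1-C_{k-1}/C_k\sim\mathrm{Beta}(1,(\ell+1)(k-1))$ and $\E{(\Delta C_k/C_k)^{p}}=p!\big/\prod_{j=1}^{p}\bklr{(\ell+1)(k-1)+j}\leq p!\,k^{-p}$; and $\Delta C_1/C_1=1$, so this bound holds for all $k\geq 1$. I would then combine this with the first bound at exponent $2p$ via Cauchy--Schwarz:
\[
\E{\Bigl(\tfrac{\Delta C_k U_k(n)}{C_k}\Bigr)^{p}}\leq\sqrt{\E{(\Delta C_k/C_k)^{2p}}\;\E{U_k(n)^{2p}}}\leq\sqrt{(2p)!\,c(2p,\ell)}\;k^{-p}\,(n/k)^{p\ell/(\ell+1)},
\]
and since $-p-p\ell/(\ell+1)=-p(2\ell+1)/(\ell+1)$ this is exactly the asserted bound. (Alternatively one could note that $U_k(n)$ is a function of the combinatorial tree $\rT(n)$, hence independent of $(C_j)_{j}$, and multiply the two moment bounds directly; Cauchy--Schwarz sidesteps the need to verify this.)

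The main obstacle is the first bound — extracting the extra factor $k^{-p\ell/(\ell+1)}$ beyond the trivial $\E{U_k(n)^p}\leq\E{M_k(n)^p}$. This relies on the conditional P\'olya-urn description \eqref{eq:ucondm} together with the exact rising-factorial moment identity, and the only delicate points are the bookkeeping of the shift parameters (the appearance of $(k-1)(\ell+1)+1$ in place of $k$, and the degenerate $k=1$ case); once those are handled, the estimate collapses onto \refL{lem:mkmombd} and elementary exponent arithmetic, and the second bound is then routine.
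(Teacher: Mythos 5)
Your proof is correct, and the second displayed bound is handled exactly as in the paper (Cauchy--Schwarz, plus the observation that $\Delta C_k/C_k\sim\mathrm{Beta}(1,(\ell+1)(k-1))$). The first bound is where you diverge from the paper's argument. The paper starts from the same conditional law \eqref{eq:ucondm} but then passes through the de Finetti representation of the classical P\'olya urn (a binomial with a beta-distributed success probability), bounds binomial moments via Stirling numbers of the second kind $\smash{p\brace j}$, and bounds the beta moments separately before combining with Lemma~\ref{lem:mkmombd}. You instead invoke the closed-form rising-factorial moment identity $\E{Z^{(q)}}=w^{(q)}(b+w+m)^{(q)}/(b+w)^{(q)}$ for the P\'olya--Eggenberger urn (derivable from the martingale $Z_t^{(q)}/(b+w+t)^{(q)}$), which with $w=1$ gives directly $\E{U_k(n)^{(q)}\mid M_k(n)}=q!\,M_k(n)^{(q)}/((k-1)(\ell+1)+1)^{(q)}$, and then apply Lemma~\ref{lem:mkmombd} and $U_k(n)^p\leq U_k(n)^{(p)}$. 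Your route is a little more streamlined since it sidesteps both the de Finetti representation and the Stirling-number expansion, at the cost of having to cite (or re-derive) the exact factorial moment identity; the paper's route requires only elementary facts about binomial and beta moments at the expense of an extra layer of notation. Both land on the same reduction to $\E{M_k(n)^j}$ with the same exponent bookkeeping, and your handling of the degenerate cases ($(k-1)(\ell+1)+1\geq k$ and $k=1$) is correct.
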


\begin{proof}
Recall from~\eq{eq:ucondm} of Lemma~\ref{lem:conditurns}
that
\be{
\law\bklr{U_k(n)|M_k(n)}=\bPclas{(k-1)(\ell+1)}{1}{M_k(n)-(k-1)(\ell+1)-1}.
}
Let the random variable~$B\sim\B[1, (k-1)(\ell+1)]$ be independent of $M_k(n)$. Conditional on $B$ and $M_k(n)$, let 
$X(M_k(n), B)$ be binomial with parameters $M_k(n)-(k-1)(\ell+1)-1$ and $B$.
By the de Finetti representation of
the classical P\'olya urn, we have
\ben{\label{eq:polyarep}
\law\bklr{U_k(n)|M_k(n)}=\law\bklr{(1+ X(M_k(n), B))|M_k(n)}.
}
H\"older's inequality implies that for non-negative $x,y$ 
and positive integer~$p$, $(x+y)^p\leq 2^{p-1} (x^p + y^p)$, and so starting from~\eq{eq:polyarep}, we have
\ben{\label{eq:ukmombd1}
\E{ U_k(n)^p  \big\vert M_k(n)}\leq 2^{p-1}\left(1+\E{  X(M_k(n),B)^p}\right).
}
 Now note that, if $\law(Y)=\Bi(N,q)$, 
then
for positive integer $p$,
and denoting Stirling numbers of the
second kind by ${p \brace j}$ (and note these are non-negative),
\be{
\E{ Y^p}=\sum_{j=0}^p {p \brace j} \E{ Y(Y-1)\cdots(Y-j+1)} \leq \sum_{j=0}^{p} {p\brace j} (Nq)^j.
}
So from~\eq{eq:ukmombd1}, condition on $M_k(n)$ (noting that $M_k(n)$
is independent of $B$) to find
\ben{\label{eq:ukmombd2}
\E{ U_k(n)^p \big\vert M_k(n)}\leq 2^{p-1}\left(1+  \sum_{j=0}^{p} {p\brace j} M_k(n)^j \E{ B^j} \right).
}
Standard formulas for beta moments imply
\ben{\label{eq:betamoms}
\E{ B^j}=\frac{\Gamma(j+1)\Gamma(1+(k-1)(\ell+1))}{\Gamma(1+(k-1)(\ell+1)+j)}\leq c k^{-j},
}
where $c=c(\ell,j)$ is a constant. Taking the expectation on both sides of (\ref{eq:ukmombd2}), together with \refL{lem:mkmombd} and (\ref{eq:betamoms}), yields that, for some $c=c(p,\ell)$,
\begin{align*}
\E{U_k(n)^p} \le 2^{p-1} \left(1 + \sum_{j=0}^p {p\brace j} \E{M_k(n)^j} \E{ B^j} \right)
\le c \sum_{j=0}^p k^{j/(\ell+1)-j} n^{j\ell/(\ell+1)}\le c \left(\frac{n}{k}\right)^{p\ell/(\ell+1)}.
\end{align*}

To deduce the last inequality, note that, under the notation of \refFt{fact:repr},
\[
\frac{\Delta C_k}{C_k} \eqdist 1- \left(\frac{E_1+\cdots+E_{k-1}}{E_1+\cdots + E_k}\right)^{\frac{1}{\ell+1}} \sim 1 - \mbox{Beta}((\ell+1)(k-1),1) \sim \mbox{Beta}(1,(\ell+1)(k-1)).
\]
(\ref{eq:betamoms}) then leads to $\E{\left(\frac{\Delta C_k}{C_k}\right)^{2p}} \le c k^{-2p}$. By Cauchy-Schwarz and the inequalities in the previous two displays,
\[
\E{\left(\frac{\Delta C_k U_k(n)}{C_k}\right)^p}
\le \left( \E{\left(\frac{\Delta C_k}{C_k}\right)^{2p}} \cdot \E{U_k(n)^{2p}}\right)^{1/2} \le c k^{-p} \left(\frac{n}{k}\right)^{p\ell/(\ell+1)}.\qedhere
\]
\end{proof}

\begin{proof}[{\bf Proof of \refL{lem:sum}}]
Recall that $\alpha=\ell/(\ell+1)$. For sufficiently large $N$, the event
\be{
\left\{\sum_{i=k(n)+1}^{\floor{\frac{n}{\ell}}+1}k(n)^{\alpha-\eps}\frac{\Delta C_i U_i(n)}{n^{\alpha}C_i}>1\right\}
}
is a subset of the union of events $\left\{\frac{\Delta C_i U_i(n)}{n^{\alpha}C_i}>c_\alpha i^{-\alpha-1+\eps/2}\right\}$, over $i=k(n)+1,\ldots, \floor{\frac{n}{\ell}}+1$ and for a sufficiently small constant $c_\alpha$.
This is because on the complement of this union, the sum is no greater than one. Next, we use Lemma~\ref{lem:urnmombd}, noting that $k(n)=\bigo(n^{\ell/(2\ell+1)})$, to find for integer $q>2/(\eps(\ell+1))$,
\ba{
\sum_{i=k(n)+1}^{\floor{\frac{n}{\ell}}+1}&\IP\left(\frac{\Delta C_i U_i(n)}{n^\alpha C_i}>c_\alpha i^{-\alpha-1+\eps/2}\right)
\leq \sum_{i=k(n)+1}^{\floor{\frac{n}{\ell}}+1}\frac{\E{\left(\frac{\Delta C_i U_i(n)}{n^{\alpha}C_i}\right)^{q(\ell+1)}}}{c_\alpha^{q(\ell+1)} i^{(-\alpha-1+\eps/2)q(\ell+1)}} \\
	&\qquad \leq c\sum_{i=k(n)+1}^{\floor{\frac{n}{\ell}}+1} i^{(\alpha+1-\eps/2)q(\ell+1)-q(2\ell+1)}=\bigo\left(k(n)^{-\eps q (\ell+1)/2+1}\right),
}
where $c$ is a constant depending only on $q,\ell$.
Multiplying by $n$ both sides of the above inequalities and then summing over $n\in\N$ yields
\begin{equation}
\label{eq:sumbd1}
\sum_{n\in\N} n\cdot  \IP\left(k(n)^{\alpha-\eps}\sum_{i=k(n)+1}^{\floor{\frac{n}{\ell}}+1}\frac{\Delta C_i U_i(n)}{n^{\alpha}C_i}>1\right) = O\left(\sum_{n\in\N}n^2\cdot k(n)^{-\veps q(\ell+1)/2 +1}\right).
\end{equation}
Since $k(n) = \Omega(n^{1/100})$, we can choose~$q$ large enough
so that $\sum_{n\in\N}n^2\cdot k(n)^{-\veps q(\ell+1)/2 +1}$ is finite. 

Furthermore, using a union bound, together with Markov's inequality and \refL{lem:urnmombd}, 
\begin{align*}
\p{\max_{k(n)+1\le i\le \lfloor \frac{n-1}{\ell}\rfloor+1} U_i(n) > n^\alpha k(n)^{-\alpha+\veps}} \le &~ \sum_{i=k(n)+1}^{\lfloor \frac{n}{\ell}\rfloor+1} \p{U_i(n)> n^\alpha k(n)^{\alpha+\veps}}\\
\le&~ \sum_{i=k(n)+1}^{\lfloor \frac{n}{\ell}\rfloor+1} \frac{\E{U_i(n)^p}}{n^{p\alpha} k(n)^{-p\alpha+p\veps}}\le a k(n)^{1-p\veps},
\end{align*}
for any $p\in\N$ and some $a=a(p,\ell)$. Now take $p$ large enough to get
\begin{equation}
\label{eq:sumbd2}
\sum_{n\in\N} n\cdot  \p{\max_{k(n)+1\le i\le \lfloor n/\ell\rfloor+1} U_i(n) > n^\alpha k(n)^{-\alpha+\veps}}<\infty.
\end{equation}
The lemma then follows from (\ref{eq:sumbd1}), (\ref{eq:sumbd2}), and the triangle inequality.
\end{proof}

\subsection{Convergence of measures}\label{sec:prok}

In this subsection, we prove \refL{lem:prok}. Fix integers $k,n$ with $n\ge (k-1)\ell$ unless specified otherwise. Recall from \refS{sec:intro} that $v_i$ denotes the vertex inserted in a uniform edge of the combinatorial tree $\rT(i-1)$, and, if $\ell$ divides $i$, $v_i$ is a branchpoint (i.e., there is a new edge attached to $v_i$ at the time $v_i$ appears). Note that $\{v_0,v_1,\ldots, v_{k\ell},L_1,\ldots,L_k\} \subset v(\rT_k(n))$, where $v_0$ is the root, $L_1,\ldots,L_k$ are the first $k$ leaves. Hereafter, conditioned on $|v(\rT_k(n))| = (\ell+1)k + m+1$ for some appropriate integer $m\coloneqq m(\rT_k(n))$, list the internal vertices of $\rT_k(n)$ as $(v_1,\ldots,v_{k\ell}, v_{i_1},\ldots,v_{i_m})$, in the order of appearance; the other $k+1$ vertices of $\rT_k(n)$ are the leaves and the root. For convenience, denote $i_0 = k\ell$.

Given $w\in v(\rT_k(n))$, recall that $\overline{w}$ is the maximal subset of $v(\rT(n))$ such that the removal of $w$ disconnects $\overline{w}$ from $\rT_k(n)$; so for $1\le i\le k\ell-1$, $\overline{v_i} = \emptyset$.  For all integer $0\le j \le m$, let $T_{i_j}$ be the subtree of $\rT(n)$ restricted to $\overline{v_{i_j}}\cup \{v_{i_j}\}$, and let 
\[
n_{i_j} = |v(T_{i_j})|.
\]
 If $\ell$ divides $i_j$, then $\overline{v_{i_j}}\neq \emptyset$ and $n_{i_j}>1$, otherwise, $\overline{v_{i_j}}=\emptyset$ and $n_{i_j}=1$.

Next, list the vertices $v_{i_0},v_{i_1},\ldots,v_{i_m}$ in the breadth-first search order of $\rT_k(n)$, as $w_0,w_1,\ldots,w_m$. So there is a bijection $f:\{i_0,\ldots,i_m\}\to\{0,\ldots,m\}$ such that, in $\rT_k(n)$, $v_{i_j}$ is identified with $w_{f(i_j)}$ and $T_{i_j}$ is attached to $w_{f(i_j)}$. Now, let $\sigma\coloneqq (\sigma(i_j):0\le j\le m)$ be a uniformly chosen random permutation of $i_0,\ldots,i_m$. We construct a random tree $\rT(n,\sigma)$ by identifying the vertex $w_j$ of $\rT_k(n)$ with the vertex $v_{\sigma(i_j)}$ of $T_{\sigma(i_j)}$, for each $0\le j\le m$. Since each $v_{i_j}$ is inserted in a uniform edge of the existing tree $\rT_k(i_j-1)$, uniformly permuting the attaching locations of $T_{i_j}$ does not change the law of the resulting tree. It thus follows that $\rT(n,\sigma) \eqdist \rT(n)$. Write $|\bn_{m,k,n}|_2 = (\sum_{j=0}^m n_{i_j}^2)^{1/2}$ and $N_n=|v(\rT(n))|=n+\lfloor n/\ell\rfloor +2$. Recall that $m\coloneqq |v(\rT_k(n))| - (\ell+1)k-1$ is the number of the internal vertices of $\rT_k(n)$ except for $\{v_1,\ldots,v_{k\ell}\}$.

\begin{lem}\label{lem:abw}
Let $k:\N\to\N$ be such that $k(n) = \Omega((\log n)^{10})$ and $k(n)=o(n^{1/2})$. For all $n\in\N$, denote the event
\[
F_n = \left\{\left\vert C_{k(n)} - k(n)^{\frac{1}{\ell+1}}\right\vert < 10 k(n)^{\frac{1}{\ell+1}-\frac14} \right\}\bigcap \left\{\left\vert \frac{c}{n^\alpha}\cdot |v(\rT_{k(n)}(n))| - C_{k(n)}\right\vert<1\right\}.
\]
 Fix a sufficiently large $n$ and write $k=k(n)$, $m=m(n)=|v(\rT_k(n))| - (\ell+1)k-1$. Then for any $V\subset v(\rT_k(n))$ and $t>4c(\ell+4)\cdot n^{\alpha}k^{\frac{1}{\ell+1}}$,
\begin{equation*}
\p{\left\vert \overline{\nu}_{k,n}(V) - \nu_{k,n}(V) \right\vert > \frac{2t}{N_n}~\bigg\vert~ F_n}\le 2 \E{\exp\left(-\frac{2t^2}{|\bn_{m,k,n}|_2^2}\right)}.
\end{equation*}
\end{lem}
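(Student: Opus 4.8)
The plan is to condition throughout on $\rT_k(n)$, on the multiset of pendant-subtree sizes $\{n_{i_0},\dots,n_{i_m}\}$, and on $C_k$, and to exploit the identity in law $\rT(n)\eqdist\rT(n,\sigma)$ established just above: conditionally on this data the decorated measure is obtained by distributing the pendant subtrees among the ``extra'' internal vertices $w_0,\dots,w_m$ according to a uniform random permutation $\sigma$. Split $V=V_b\sqcup V_w$, where $V_b$ is the set of vertices of $\rT_k(n)$ carrying no pendant mass, so that $|V_b|\le(\ell+1)k$ and $|V_w|\le m+1$. Then $\overline{\nu}_{k,n}(V)=N_n^{-1}(|V_b|+S)$ with $S:=\sum_{j:\,w_j\in V}n_{\sigma(i_j)}$, which conditionally is the sum over a uniformly chosen $|V_w|$-element subset of $\{n_{i_0},\dots,n_{i_m}\}$, while $\nu_{k,n}(V)=|V|/|v(\rT_k(n))|$ is deterministic. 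I will use the bookkeeping identities $\sum_{l=0}^m n_{i_l}=N_n-(\ell+1)k$ and $|v(\rT_k(n))|=(\ell+1)k+m+1$.

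Decompose $\overline{\nu}_{k,n}(V)-\nu_{k,n}(V)$ into the bias $\Esub{\sigma}{\overline{\nu}_{k,n}(V)}-\nu_{k,n}(V)$ and the fluctuation $N_n^{-1}(S-\Esub{\sigma}{S})$, where $\Esub{\sigma}{S}=|V_w|(N_n-(\ell+1)k)/(m+1)$. Using the two identities above one finds, after cancellation, that $|\Esub{\sigma}{\overline{\nu}_{k,n}(V)}-\nu_{k,n}(V)|\le(\ell+1)k/|v(\rT_k(n))|$. On $F_n$ one has $|v(\rT_k(n))|>c^{-1}n^\alpha(C_k-1)$ with $C_k=k^{1/(\ell+1)}(1+o(1))$, and since $N_n=n(1+\ell^{-1})+O(1)$ while $t>4c(\ell+4)n^\alpha k^{1/(\ell+1)}$, the bias is at most $t/N_n$ for all large $n$: the ratio of bias to $t/N_n$ is $O((k/n)^{(\ell-1)/(\ell+1)})$, which tends to $0$ for $\ell\ge2$ and is bounded above by $1/5+o(1)$ for $\ell=1$. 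Consequently, on $F_n$, $\{\,|\overline{\nu}_{k,n}(V)-\nu_{k,n}(V)|>2t/N_n\,\}\subseteq\{\,|S-\Esub{\sigma}{S}|>t\,\}$.

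The heart of the proof is the sub-Gaussian estimate $\psub{\sigma}{|S-\Esub{\sigma}{S}|\ge t}\le 2\exp(-2t^2/\sum_l n_{i_l}^2)$, whose variance proxy is exactly $|\bn_{m,k,n}|_2^2$. I would obtain it via a Doob martingale in which the selection indicators $\epsilon_l$ (the indicator that $n_{i_l}$ lies in the chosen subset) are revealed \emph{in non-increasing order of the sizes $n_{i_l}$}. Writing $\cF_l$ for the $\sigma$-field after $l$ revelations, a short computation shows that the $l$-th increment of $(\Esub{\sigma}{S\mid\cF_l})_{l}$ equals $\gamma_l(\epsilon_l-q_l)$, where $q_l=\psub{\sigma}{\epsilon_l=1\mid\cF_{l-1}}$ and $\gamma_l=n_{i_l}-(\text{average of the not-yet-revealed sizes})$; $\gamma_l$ is deterministic given the conditioning, and---because sizes are revealed largest first---satisfies $0\le\gamma_l\le n_{i_l}$. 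Conditionally on $\cF_{l-1}$ the centred Bernoulli variable $\epsilon_l-q_l$ is supported on an interval of length $1$, so Hoeffding's lemma gives $\Esub{\sigma}{e^{\mu\gamma_l(\epsilon_l-q_l)}\mid\cF_{l-1}}\le e^{\mu^2\gamma_l^2/8}$; chaining these conditional moment bounds yields $\Esub{\sigma}{e^{\mu(S-\Esub{\sigma}{S})}}\le\exp(\mu^2\sum_l\gamma_l^2/8)\le\exp(\mu^2|\bn_{m,k,n}|_2^2/8)$, and the Chernoff choice $\mu=4t/|\bn_{m,k,n}|_2^2$, together with its mirror, gives the two-sided tail. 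Combined with the preceding paragraph, for each realization of the tree lying in $F_n$ one obtains $\psub{\sigma}{|\overline{\nu}_{k,n}(V)-\nu_{k,n}(V)|>2t/N_n}\le 2\exp(-2t^2/|\bn_{m,k,n}|_2^2)$; averaging the inequality $\I{F_n}\,\psub{\sigma}{|\overline{\nu}_{k,n}(V)-\nu_{k,n}(V)|>2t/N_n}\le 2\,\I{F_n}\exp(-2t^2/|\bn_{m,k,n}|_2^2)$ over $\rT_k(n)$, and using $\p{F_n}\to1$ (by \refC{cor:bd2} and the definition of $F_n$), gives the stated bound.

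The main obstacle is the concentration step, specifically securing the sharp variance proxy $\sum_l n_{i_l}^2$ rather than the $(m+1)\max_l n_{i_l}^2$ that a direct bounded-differences inequality over the $|V_w|$ draws would produce; revealing the selection indicators largest-first is precisely what forces $\gamma_l\le n_{i_l}$, and hence $\sum_l\gamma_l^2\le|\bn_{m,k,n}|_2^2$. A secondary, more routine, difficulty is calibrating the bias estimate against the hypothesis $t>4c(\ell+4)n^\alpha k^{1/(\ell+1)}$ via the control on $C_k$ and $|v(\rT_k(n))|$ afforded by the event $F_n$.
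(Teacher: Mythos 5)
Your proof is correct, and the concentration step takes a genuinely different route from the paper's. The decomposition into a deterministic bias plus a random fluctuation, and the calibration of the bias against the hypothesis $t>4c(\ell+4)\cdot n^{\alpha}k^{\frac{1}{\ell+1}}$ on the event $F_n$, match the paper's argument (the paper phrases the bias as $\big|\tfrac{|V\setminus V'|}{N}-\tfrac{|V|}{M}+\tfrac{|V'|}{m+1}\cdot\tfrac{n'}{N}\big|$ and shows it is deterministically at most $t/N_n$ on $F_n$, which is the same estimate you prove). The key divergence is how the sampling-without-replacement tail is handled. The paper invokes a convex-order comparison (citing Aldous's Proposition 20.6 and Strassen) to replace the permutation $\sigma$ by i.i.d.\ uniform indices $r_j$, and then applies the single-variable Hoeffding moment-generating-function bound from McDiarmid's Lemma 2.6 to the resulting sum of independent scaled Bernoulli variables. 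You instead construct a Doob martingale by revealing the selection indicators $\epsilon_l$ in non-increasing order of the pendant sizes $n_{i_l}$, identify the $l$th increment as $\gamma_l(\epsilon_l-q_l)$ with $\gamma_l=n_{i_l}-\frac{1}{m-l}\sum_{j>l}n_{i_j}$ deterministic given the conditioning, observe that the largest-first ordering forces $0\le\gamma_l\le n_{i_l}$, and apply Hoeffding's lemma increment by increment. Both routes deliver the same sharp variance proxy $|\bn_{m,k,n}|_2^2=\sum_l n_{i_l}^2$ and the same tail $2\exp\!\left(-2t^2/|\bn_{m,k,n}|_2^2\right)$; the convex-order route is an off-the-shelf black box, whereas your martingale route is self-contained and exposes exactly where the revelation order is needed (precisely to secure $\gamma_l\ge 0$, hence $\gamma_l^2\le n_{i_l}^2$). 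One small remark: your final averaging produces a bound on $\p{\{\cdot\}\cap F_n}$ rather than directly on the conditional $\p{\cdot \mid F_n}$; dividing by $\p{F_n}$ introduces a factor $1+o(1)$, which is harmless for the Borel--Cantelli application in Lemma~\ref{lem:prok}, and the same slippage between $\E{\cdot\mid F_n}$ and $\E{\cdot}$ is implicit in the paper's own chain of inequalities.
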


\begin{proof}
For the duration of the proof, let $V\subset v(\rT_k(n))$, and write $V'= V\bigcap\{v_{i_0},\ldots,v_{i_m}\}$, $N=N_n=|v(\rT(n))|$. Recall that $\sigma=(\sigma(i_j):0\le j\le m)$ is a uniform permutation of $i_0,\ldots,i_m$. Write $\sigma_j = \sigma(i_j)$ for each $0\le j\le m$. By definition, 
\begin{equation}\label{eq:meas}
\overline{\nu}_{k,n}(V)=\frac{\sum\limits_{v_{i_j}\in V'}n_{i_j} + |V\setminus V'|}{N} \eqdist \frac{\sum\limits_{v_{\sigma_j}\in V'}n_{\sigma_j} + |V\setminus V'|}{N} \coloneqq \frac{\sum\limits_{v_{\sigma_j}\in V'}|v(T_{\sigma_j})|+ |V\setminus V'|}{N} ;
\end{equation}
the second equality follows from the fact that $\rT(n,\sigma)\eqdist \rT(n)$, so $\sum_{v_{i_j}\in V'} n_{i_j} \eqdist \sum_{v_{\sigma_j}\in V'} n_{\sigma_j}$. Note that, exchangeability resulting from the uniform permutation $\sigma$ only exhibits through $(n_{\sigma_j}:0\le j\le m)$, but not $V\setminus V'$. Let $n'= \sum_{j=0}^m n_{i_j} = N - (\ell+1)k$. To use exchangeability to deduce the tail bound for $\left\vert\overline{\nu}_{k,n}(V)-\nu_{k,n}(V)\right\vert$, we first show that it is close to $\left\vert\frac{\sum_{v_{\sigma_j}\in V'} n_{\sigma_j}}{n'} - \frac{|V'|}{m+1}\right\vert$, and then employ exchangeability to bound the latter. Indeed, by the triangle inequality, writing $M=M_n=|v(\rT_k(n))|$ and noting that $\nu_{k,n}(V) = \frac{|V|}{M}$, we have
\begin{align*}
\left\vert\overline{\nu}_{k,n}(V)-\nu_{k,n}(V)\right\vert
\eqdist &~ \left\vert \frac{\sum_{v_{\sigma_j}\in V'}n_{\sigma_j} + |V\setminus V'|}{N} - \frac{|V|}{M}\right\vert\\
\le&~ \frac{n'}{N}\cdot 
 \left\vert \frac{\sum_{v_{\sigma_j}\in V'}n_{\sigma_j}}{n'} - \frac{|V'|}{m+1}\right\vert
 +\left\vert \frac{|V\setminus V'|}{N}-\frac{|V|}{M} + \frac{|V'|}{m+1}\cdot \frac{n'}{N}\right\vert.
\end{align*}
Hence, even conditionally,
\begin{align}
& \p{\Big\vert\overline{\nu}_{k,n}(V)-\nu_{k,n}(V)\Big\vert> \frac{2t}{N}~\bigg\vert~F_n}\notag \\
\le&~  \p{\Big\vert\sum_{v_{\sigma_j}\in V'}\frac{n_{\sigma_j}}{n'}- \frac{|V'|}{m+1}\Big\vert> \frac{t}{n'}~\bigg\vert~F_n}\label{in:prokbd1} \\
&\instep + \p{\Big\vert \frac{|V\setminus V'|}{N}-\frac{|V|}{M} + \frac{|V'|}{m+1}\cdot \frac{n'}{N}\Big\vert> \frac{t}{N}~\bigg\vert~F_n}.\label{in:prokbd}
\end{align}

We now compute (\ref{in:prokbd}). It is convenient to keep in mind that we have chosen $n$ sufficiently large, and the variables $k=k(n)$, $M=M_n= |v(\rT_k(n))|$, $m=m(n)=M - (\ell+1)k-1$, $N=N_n = |v(\rT(n))|$, depend on $n$. Note that $|V\setminus V'|\le (\ell+1)k+1$ and $t>4c(\ell+4)\cdot n^{\alpha}k^{\frac{1}{\ell+1}}$, so $t>2|V\setminus V'|$ for large enough $n$. It follows that 
\begin{align}
\left\{\left\vert \frac{|V\setminus V'|}{N}-\frac{|V|}{M} + \frac{|V'|}{m+1}\cdot \frac{n'}{N}\right\vert > \frac{t}{N}\right\} 
\subset&~ \left\{\frac{|V\setminus V'|}{N}>\frac{t}{2N}\right\} \bigcup 
\left\{\left\vert \frac{|V|}{M} - \frac{|V'|}{m+1}\cdot \frac{n'}{N} \right\vert >\frac{t}{2N}\right\} \notag\\
=&~ \emptyset \bigcup \left\{\left\vert |V| \cdot \frac{N}{M} - |V'| \cdot \frac{n'}{m+1}\right\vert > \frac{t}{2} \right\}.\label{eq:setrela}
\end{align}
Given the event $F_n$, it is easily seen that $\frac{1}{2c}\cdot  n^\alpha k^{\frac{1}{\ell+1}}\le M\le \frac{2}{c}\cdot n^\alpha k^{\frac{1}{\ell+1}}$. Also, we have $N=N_n =\Theta( n)$ and $n'=  N -  (\ell+1)k$, so 
\[
\frac{N}{M}\leq \frac{n'}{m+1} = \frac{N-(\ell+1)k}{M- (\ell+1)k} \le   \frac{N}{M} + \frac{(\ell+1)k n}{M(M-(\ell+1)k)}.
\]
Note also that $|V\setminus V'|\le (\ell+1)k+1$, $|V'|\le M-(\ell+1)k$, and $N \le n(1+1/\ell)+2$, it then follows by the triangle inequality that on the event $F_n$,
\begin{align*}
\left\vert |V| \cdot \frac{N}{M} - |V'| \cdot \frac{n'}{m+1}\right\vert 
\le&~ \left\vert |V|\cdot \frac{N}{M} - |V'| \cdot \frac{N}{M}\right\vert + |V'|\cdot\frac{(\ell+1)k n}{M(M-(\ell+1)k)}\\
=&~ |V\setminus V'| \cdot \frac{N}{M} + |V'|\cdot\frac{(\ell+1)k n}{M(M-(\ell+1)k)}\\
\le&~ \left\{(\ell+1)k+1\right\}\cdot \frac{N}{M} +  \frac{(\ell+1) k n}{M}\\
\le&~ (2\ell+3) \cdot \frac{kn}{M} \le 2c(2\ell+3) \cdot n^{1-\alpha} k^{\frac{\ell}{\ell+1}},
\end{align*}
where the last inequality is due to $M\ge \frac{1}{2c}\cdot n^\alpha k^{\frac{1}{\ell+1}}$ on $F_n$. Since $t> 4c(\ell+4)\cdot n^\alpha k^{\frac{1}{\ell+1}}$, we thus have
\[
\left\{\left\vert |V| \cdot \frac{N}{M} - |V'| \cdot \frac{n'}{m+1}\right\vert > \frac{t}{2}~\right\}\subset \left\{ 2c(2\ell+3) \cdot n^{1-\alpha} k^{\frac{\ell}{\ell+1}}>2c(\ell+4)\cdot n^\alpha k^{\frac{1}{\ell+1}}\right\}=\emptyset, 
\]
where the last equality is due to the fact that $\alpha\ge \frac12$, and when $\alpha=\frac12$ we have $\ell=1$. Combined with (\ref{eq:setrela}), we have
\begin{equation}
\label{in:prokbd2}
 \p{\Big\vert \frac{|V\setminus V'|}{N}-\frac{|V|}{M} + \frac{|V'|}{m+1}\cdot \frac{n'}{N}\Big\vert> \frac{t}{N}~\bigg\vert~F_n}=0.
\end{equation}

It remains to bound (\ref{in:prokbd1}), shown below. The rest of the proof follows a similar argument as in \citep[Lemma 5.3]{ABW}, so we only point out the differences, and refer the reader to that work for omitted explanations. Let $r_0,\ldots,r_m$ be independent random variables with uniform law over $\{i_0,\ldots,i_m\}$. Recall that $n'= \sum_{j=0}^m n_{i_j}$. It follows by symmetry that
\[
\E{\sum_{v_{r_j}\in V'} n_{i_j}~\bigg\vert~n'}= \E{\sum_{j=0}^m n_{i_j} \cdot \I{v_{r_j}\in V'}~\bigg\vert~n'} = n' \cdot \p{v_{r_1}\in V'} = n'\cdot \frac{|V'|}{m+1}.
\]
Taking $a\coloneq\frac{4t}{|\bn_{m,k,n}|_2^2}$ and applying Markov's inequality as in \citep[Theorem 2.5]{Mc} gives a Hoeffding-type inequality for $\sum_{v_{\sigma_j}\in V'}n_{\sigma_j}$: for any $t>0$,
\begin{align}
 \p{\Big\vert\sum_{v_{\sigma_j}\in V'}\frac{n_{\sigma_j}}{n'}- \frac{|V'|}{m+1}\Big\vert> \frac{t}{n'}~\bigg\vert~F_n} 
=&~ \p{\Big\vert\sum_{v_{\sigma_j}\in V'} n_{\sigma_j} - n'\cdot \frac{|V'|}{m+1}\Big\vert> t~\bigg\vert~F_n} \notag\\
\le&~ e^{-at}\cdot \E{\exp\Big(a\cdot \Big\vert\sum_{v_{\sigma_j}\in V'} n_{\sigma_j} - n'\cdot \frac{|V'|}{m+1}\Big\vert\Big) ~\bigg\vert~F_n}\notag\\
\le&~ e^{-at}\cdot \E{\exp\Big(a\cdot \Big\vert\sum_{v_{r_j}\in V'} n_{r_j} -n'\cdot \frac{|V'|}{m+1}\Big\vert\Big) ~\bigg\vert~F_n}\notag\\
\le  &~2\E{\exp\pran{-\frac{2t^2}{|\bn_{m,k,n}|_2^2}}};\notag
\end{align}
the first inequality follows from Markov's inequality; the second one is due to \citep[Proposition 20.6]{Ald} and \citep[Theorem 2]{Str}; the last inequality follows from a straightforward calculation as in \citep[Lemma 2.6]{Mc}. Together with (\ref{in:prokbd1}) and (\ref{in:prokbd2}), we may conclude the proof.
\end{proof}

 Given a graph $G$ and a subgraph $G'$ of $G$, write $G-G'$ for the components obtained by removing all edges and vertices of $G'$ from $G$. For all integers $k,n$ with $n\ge(k-1)\ell$, let
\begin{equation}\label{eq:Skn}
S_{k,n}= S\left(\rT_{k}(n)\right) =\max\left\{|v(T)|: T \in \rT(n) - \rT_k(n)\right\}.
\end{equation}

\begin{lem}\label{lem:maxsize}
Let $k:\N\to\N$ be such that $k(n)=\Omega\left(n^{1/100}\right)$ and $k(n) = o(n)$ and restrict $\ell\geq2$. There exists $a=a(\ell)>0$ such that, for sufficiently large $n$,
\[
\p{S_{k(n),n} \ge n\cdot k(n)^{-\frac{8}{3(\ell+1)}}} \le a\cdot n^{-2}.
\] 
\end{lem}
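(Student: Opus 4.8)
The plan is to bound $S_{k,n}$ by the largest number of vertices in a pendant subtree of $\rT(n)$ hanging off $\rT_k(n)$, and to control each such subtree via an auxiliary P\'olya-type urn, in the spirit of the urn of \refS{sec:polya}. For $m\geq 0$ write $e(m):=|v(\rT(m))|-1=m+\floor{m/\ell}+1$, the deterministic number of edges of $\rT(m)$. For $m\geq k\ell$ the components of $\rT(n)-\rT_k(n)$ are subtrees $T^{(1)},\dots,T^{(R)}$ of $\rT(n)$ with $R\leq\floor{n/\ell}+1-k<n$, since distinct components contain distinct leaves among $L_{k+1},\dots,L_{\floor{n/\ell}+1}$; here $T^{(r)}$ is \emph{born} at the time $\tau_r\geq k\ell$ (necessarily a multiple of $\ell$) at which $L_{k+r}$ is attached to a newly inserted vertex $v_{\tau_r}$ lying on an edge of $\rT_k(\tau_r-1)$, and $T^{(r)}=\{L_{k+r}\}$ at that moment. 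Since $S_{k,n}=\max_{1\leq r\leq R}|v(T^{(r)})|$, it suffices to bound $\p{|v(T^{(r)})|\geq n\,k^{-8/(3(\ell+1))}}$ for each $r$ and union over $r$.

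Fix $r$ and set $s(m)=|v(T^{(r)})|$ for $m\geq\tau_r$. As $T^{(r)}$ is a tree joined to $\rT_k(m)$ by a single edge, exactly $s(m)$ edges of $\rT(m)$ belong to $T^{(r)}$ (its internal edges together with the uplink to $v_{\tau_r}$); moreover, when $v_{m+1}$ subdivides one of these edges it, together with any leaf then attached to it, again lies in the component $T^{(r)}$. Hence, conditionally on $\tau_r$, the process $(s(m):m\geq\tau_r)$ is an \emph{autonomous} Markov chain: $s(\tau_r)=1$, and given $s(m)$, with probability $s(m)/e(m)$ one has $s(m+1)=s(m)+b_{m+1}$, where $b_{m+1}=2$ if $\ell\mid m+1$ and $b_{m+1}=1$ otherwise, and $s(m+1)=s(m)$ on the complementary event. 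This is the tagged-cluster marginal of a P\'olya-type urn on the edges of $\rT(n)$; in particular $m\mapsto s(m)/e(m)$ is a bounded martingale started from $1/e(\tau_r)$, so $\E{s(n)\mid\tau_r}=e(n)/e(\tau_r)$.

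The crux is the moment bound: for each $p\in\N$ there is $D_p=D_p(\ell)<\infty$ with
\[
\E{s(n)^p\mid\tau_r}\ \leq\ D_p\,\bbklr{e(n)/e(\tau_r)}^{p}\ \leq\ D_p\,\bklr{2n/(k\ell)}^{p},
\]
using $\tau_r\geq k\ell$ and $e(n)\leq 2n$. I would prove this by induction on $p$, working with the ascending factorial moments $\psi_p(m):=\E{s(m)(s(m)+1)\cdots(s(m)+p-1)\mid\tau_r}\geq\E{s(m)^p\mid\tau_r}$; the case $p=1$ is the martingale identity. The inductive step uses the exact recursion $\psi_p(m+1)=\psi_p(m)(1+p/e(m))$ at ordinary steps and $\psi_p(m+1)\leq\psi_p(m)(1+2p/e(m))+p^2(p+1)e(m)^{-1}\psi_{p-1}(m)$ at leaf steps (the latter from the elementary inequality $s(s+1)^{-1}\cdots(s+p-1)\leq p\cdot s\cdots(s+p-2)$). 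Unrolling, the multiplicative part of $\psi_p(n)$ is controlled by $\exp\bklr{p\sum_{\tau_r\leq j<n}e(j)^{-1}+p\sum_{\tau_r\leq j<n,\ \ell\mid j+1}e(j)^{-1}}$; since $e(j)\geq\tfrac{\ell+1}{\ell}j$ and $e(i\ell-1)=i(\ell+1)-1$, the two sums are $\tfrac{\ell}{\ell+1}\log(n/\tau_r)+\bigo(1)$ and $\tfrac{1}{\ell+1}\log(n/\tau_r)+\bigo(1)$, and the identity $\tfrac{\ell}{\ell+1}+\tfrac{1}{\ell+1}=1$ makes this contribution of order $(e(n)/e(\tau_r))^p$ --- with exponent \emph{exactly} $p$. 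The additive terms are handled with the inductive hypothesis for $p-1$ and the estimate $\sum_i e(i\ell-1)^{-2}=\bigo(1/e(\tau_r))$, and are of the same order, closing the induction. I expect this step --- obtaining exponent exactly $p$ on $e(n)/e(\tau_r)$, rather than the $p+cp^2$ that the crude bounds produce, which would defeat the union bound below --- to be the main obstacle.

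Granting the moment bound, Markov's inequality and a union bound over the $R<n$ components give, for every $p\in\N$,
\[
\p{S_{k,n}\geq n\,k^{-8/(3(\ell+1))}}\ \leq\ n\cdot D_p\,\bklr{2n/(k\ell)}^{p}\bklr{n\,k^{-8/(3(\ell+1))}}^{-p}\ =\ n\,D_p\,(2/\ell)^{p}\,k^{-p(1-8/(3(\ell+1)))}.
\]
Since $\ell\geq 2$ we have $1-\tfrac{8}{3(\ell+1)}\geq\tfrac19>0$ --- this is the only point at which $\ell\geq 2$ is used --- and $k=k(n)=\Omega(n^{1/100})$ gives $k\geq c_0\,n^{1/100}$ for all large $n$, so the right-hand side is at most $D_p(2/\ell)^p c_0^{-p/9}\,n^{1-p/900}$. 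Choosing $p=p(\ell)$ with $p/900\geq 3$ yields the asserted bound $a\,n^{-2}$ with $a=a(\ell)$. (Any fixed power $k^{-\beta}$, $0<\beta<1$, would work in place of $k^{-8/(3(\ell+1))}$; the stated exponent is what is needed afterwards in \refL{lem:prok}.)
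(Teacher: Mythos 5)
Your proposal is correct, and its engine is the same as the paper's (moment bounds for a P\'olya-type urn with immigration at times divisible by $\ell$, then Markov plus a union bound with a large fixed power $p$, using $k(n)=\Omega(n^{1/100})$ and $1-\tfrac{8}{3(\ell+1)}>0$ for $\ell\geq2$), but the decomposition is genuinely different and more direct. The paper (Section~\ref{sec:maxsize}) identifies $S_{k,n}$ with $\max_i \cU_i(n-k\ell;(\ell+1)k,1)$ for the infinite-colors urn, reduces each colour count by two conditional urn representations (Fact~\ref{fact:urndist}) to the auxiliary two-colour urn $W(t;b,w)$, proves the $p$-th moment bound for $W$ by induction (Lemma~\ref{lem:momentbd}, with a harmless logarithmic factor), converts it into a per-colour bound decaying in the colour index (Lemma~\ref{lem:ucondbd}), and unions over colours, summing an integral. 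You instead tag each pendant component of $\rT(n)-\rT_k(n)$ directly: since the total edge count $e(m)$ is deterministic and a component with $s$ vertices owns exactly $s$ edges, the component-size process is an autonomous Markov chain --- which is in fact exactly the paper's two-colour urn $W$ in disguise (at leaf steps both new items carry the chosen colour) --- born as a single vertex at a deterministic time $\geq k\ell$ for each founding leaf. Your ascending-factorial induction does close with exponent exactly $p$ (ordinary steps give the factor $1+p/e(m)$ exactly, leaf steps give $1+2p/e(m)$ plus an additive term of lower order, and the densities $\tfrac{\ell-1}{\ell+1}p+\tfrac{2}{\ell+1}p=p$ balance as you say; even the paper's extra logarithm would not hurt the final union bound), and the worst-case birth time $\tau_r\geq k\ell$ together with a union over at most $n$ components replaces the paper's colour-by-colour summation. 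What your route buys is a shorter, self-contained argument avoiding Fact~\ref{fact:urndist} and Lemma~\ref{lem:ucondbd}; what the paper's buys is sharper per-colour information (the bound improving with the colour index), which is not needed for this lemma. Two cosmetic points: not every leaf $L_{k+r}$ founds a component (some join existing ones), so your indexing should be read as ``the component founded by $L_{k+r}$, if any,'' with size $0$ otherwise --- this is harmless since the per-index tail bound is uniform in the (then deterministic) birth time $(k+r-1)\ell\geq k\ell$; and your elementary inequality should read $s(s+1)\cdots(s+p-1)/(s+1)\leq p\cdot s(s+1)\cdots(s+p-2)$, which holds for $s\geq1$.
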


The proof of \refL{lem:maxsize} is deferred to \refS{sec:maxsize}.

\begin{proof}
[{\bf Proof of \refL{lem:prok}}]
For all $n\in\N$, let $\veps_n= k(n)^{-\frac{1}{12(\ell+1)}}$ and 
\[
M_n= \left\lceil \veps_n^{-1}\cdot \frac{c}{n^\alpha}\cdot|v(\rT_{k(n)}(n))| \right\rceil. 
\]
In the remaining proof we fix a large $n\in\N$ and write $k=k(n)$, unless we consider varying $n$.  With a similar argument as in \refP{prop:finite}, we find a covering, denoted by $\{B_{n,1},\ldots,B_{n,M_n}\}$, of $(v(\rT_{k}(n)),\frac{c}{n^\alpha}\cdot \dgr)$, with diameter at most $\veps_n$. Let $A_{n,1} = B_{n,1}$, and for $i>1$, let $A_{n,i} = B_{n,i}\setminus A_{n,i-1}$. Then $\{A_{n,1},\ldots,A_{n,M_n}\}$ forms a disjoint cover of $(v(\rT_{k}(n)),\frac{c}{n^\alpha}\cdot \dgr)$, with diameter at most $\veps_n$.

This paragraph follows a similar argument as in \cite[Corollary 6.2]{ABW}, and we refer the reader to that work for omitted details. Recall that $d_{k,n}$ denotes the L\'evy-Prokhorov distance on $(v(\rT_{k}(n)),\frac{c}{n^\alpha}\cdot \dgr)$. We claim that,
\begin{align*}
&\left\{ d_{k,n}\left(\overline{\nu}_{k,n}, \nu_{k,n}\right) >\veps_n\right\}
\subset \left\{|\overline{\nu}_{k,n}(A_{n,j}) - \nu_{k,n}(A_{n,j})|>\frac{\veps_n}{M_n}~\mbox{ for some } 1\le j\le M_n\right\}; 
\end{align*}
a quick proof is provided as follows. Suppose that $d_{k,n}\left( \overline{\nu}_{k,n}, \nu_{k,n}\right)>\veps_n$. Then there exists a set $S\subset v(\rT_{k}(n))$ such that either $\overline{\nu}_{k,n}(S^{\veps_n}) < \nu_{k,n}(S) - \veps_n$ or $\nu_{k,n}(S^{\veps_n}) < \overline{\nu}_{k,n}(S) - \veps_n$. Since $\{A_{n,1},\ldots,A_{n,M_n}\}$ is a disjoint cover of $\rT_{k}(n)$, there exists $j$ such that either
\[
\overline{\nu}_{k,n}(S^{\veps_n}\cap A_{n,j}) < \nu_{k,n}(S\cap A_{n,j}) - \veps_n/M_n \mbox{ or}
\]
\[
\nu_{k,n}(S^{\veps_n}\cap A_{n,j}) < \overline{\nu}_{k,n}(S\cap A_{n,j}) - \veps_n/M_n.
\]
So $S\cap A_{n,j} \neq \emptyset$. Since the diameter of $A_{n,j}$ is at most $\veps_n$, we have $A_{n,j}\subset S^{\veps_n}$. So, either
\[
\overline{\nu}_{k,n}(A_{n,j}) < \nu_{k,n}(A_{n,j}) - \veps_n/M_n \mbox{ or }\nu_{k,n}(A_{n,j}) < \overline{\nu}_{k,n}(A_{n,j}) - \veps_n/M_n.
\]
Hence the claim. 

Next, let as before
\[
F_n = \left\{\left\vert C_{k(n)} - k(n)^{\frac{1}{\ell+1}}\right\vert < 10 k(n)^{\frac{1}{\ell+1}-\frac14} \right\}\bigcap \left\{\left\vert \frac{c}{n^\alpha}\cdot |v(\rT_{k(n)}(n))| - C_{k(n)}\right\vert<1\right\},
\]
and note that it follows from \refL{lem:bd} and \refL{lem:convas} that, for sufficiently large $n$, 
\begin{equation}\label{in:smallprob}
\p{F_n}>1- 2 e^{-k(n)^{1/2}/4} - 2\exp\left(-\frac{1\cdot n^\alpha}{32 c k(n)^{\frac{1}{\ell+1}}}\right)-e^{-k(n)^{1/3}} > 1-2 e^{-k(n)^{1/3}}.
\end{equation}
We now easily obtain that
\begin{align}
&\p{d_{k,n}\left( \overline{\nu}_{k,n}, \nu_{k,n}\right) >\veps_n,~F_n} \notag\\
&\qquad\le~  \p{|\overline{\nu}_{k,n}(A_{n,j}) - \nu_{k,n}(A_{n,j})|>\frac{\veps_n}{M_n}~\mbox{ for some } 1\le j\le M_n,~F_n}.  \label{in:measbd1}
\end{align}
On the event $F_n$, we have 
\[
M_n \le m_n \coloneqq \left\lceil \veps_n^{-1}\left\{1+k(n)^{\frac{1}{\ell+1}}\left(1+10k(n)^{-\frac14}\right)\right\}\right\rceil =\Theta\left(\veps_n^{-1} k(n)^{\frac{1}{\ell+1}}\right).
\] 
Furthermore, let $N_n =|v(\rT(n))|=n+\lfloor n/\ell\rfloor+2$, $m =m(n)= |v(\rT_{k}(n))| - (\ell+1)k-1$, and write $|\bn_n|_2 = \left(\sum_{j=0}^{m} n_{i_j}^2\right)^{1/2}$; $\bn_n$ here is $\bn_{m,k,n}$ in \refL{lem:abw}. Set $t= \frac{\veps_n N_n}{2m_n}$ and note that since $k(n) = o(n^{1/3})$, it is easily checked that $t= \Theta\left(n\cdot k(n)^{-\frac{7}{6(\ell+1)}}\right)>4c(\ell+1)n^\alpha k^{1/(\ell+1)}$ for large~$n$ and so satisfies the assumption in \refL{lem:abw}. Applying the bound on $M_n$ and \refL{lem:abw} to (\ref{in:measbd1}), it follows that
\begin{align}
\p{d_{k(n),n}\left( \overline{\nu}_{k,n}, \nu_{k,n}\right) >\veps_n~\Big\vert~F_n}
\le m_n \cdot 2 \E{\exp\left( -\frac{2\veps_n^2 N_n^2}{|\bn_n|_2^2\cdot 4 m_n^2}\right)} .\label{in:prokexpbd}
\end{align}
Now bound
\[
|\bn_n|_2^2\leq \left(\sum_{j=0}^{m(n)} n_{i_j}\right) \max_{0\le j\le m(n)} n_{i_j}\leq n \cdot S_{k(n),n},
\]
where 
$S_{k(n),n} = \max_{0\le j\le m(n)} n_{i_j}+1$. Thus,
\begin{equation}\label{eq:prokexpbd}
\frac{\veps_n^2N_n^2}{|\bn_n|_2^2m_n^2} = \Omega\left(\frac{\veps_n^2 n}{S_{k(n),n} m_n^2}\right) = \Omega\left(\frac{n\cdot k(n)^{-\frac{7}{3(\ell+1)}}}{S_{k(n),n}} \right).
\end{equation}
Since $F_n$ and $S_{k(n),n}$ are independent, it then follows from the triangle inequality, (\ref{in:prokexpbd}), (\ref{eq:prokexpbd}), and \refL{lem:maxsize} that
\begin{align*}
&\p{d_{k(n),n}\left(\overline{\nu}_{k(n),n},\nu_{k(n),n}\right) > \veps_n,~F_n}\\
\le&~\p{d_{k(n),n}\left(\overline{\nu}_{k(n),n},\nu_{k(n),n}\right) > \veps_n, \,\,  S_{k(n),n}< n\cdot k(n)^{-\frac{8}{3(\ell+1)}},~F_n} + \p{S_{k(n),n}\ge n\cdot k(n)^{-\frac{8}{3(\ell+1)}}} \\ 
=&~O\left( m_n \cdot \exp\left(-\frac{n\cdot k(n)^{-\frac{7}{3(\ell+1)}}}{n\cdot k(n)^{-\frac{8}{3(\ell+1)}}}\right)\right) + O\left(n^{-2} \right)\\
=&~O\left( m_n\cdot \exp\left(-k(n)^{\frac{1}{3(\ell+1)}} \right) \right) + O\left(n^{-2}\right).
\end{align*}
Together with (\ref{in:smallprob}), we further deduce
\begin{align*}
\p{d_{k(n),n} \left(\overline{\nu}_{k(n),n},\nu_{k(n),n}\right) > \veps_n}
\le&~  \p{d_{k(n),n} \left(\overline{\nu}_{k(n),n},\nu_{k(n),n}\right) > \veps_n,~F_n} + \p{F_n^c}\\
=&~ O\left( m_n \cdot \exp\left(- k(n)^{\frac{1}{3(\ell+1)}}\right) + n^{-2} + e^{-k(n)^{1/3}}\right).
\end{align*}
Finally, recalling that $m_n = \Theta\left(k(n)^{\frac{13}{12(\ell+1)}}\right)$ and $k(n)= \Omega\left(n^{1/100}\right)$, summing over $n$ and applying the Borel-Cantelli lemma yields the almost sure convergence.
\end{proof}

\subsection{Maximal occupation of an infinite-colors P\'olya urn}\label{sec:maxsize}

In this subsection we prove \refL{lem:maxsize}, by viewing sizes of the subtrees as the occupations of a modified version of infinite-colors P\'olya urn, introduced below.

At time $0$, the urn contains $b\in\N$ black balls and $w\in\N$ balls of color $1$. At time $t\in\N$, a ball is chosen at random from the urn and returned along with an additional ball of the same color. Additionally, if $\ell$ divides $t$ and the urn has black balls and balls of color $\{1,\ldots,p\}$, then (i) if the chosen ball is black, add a ball of color $p+1$; or (ii) if the chosen ball is non-black, add a ball of the same color as the chosen ball. For each $i\in\N$, let $\cU_i(t;b,w)$ be the number of color-$i$ balls and let $\cU_0(t;b,w)$ be the number of black balls in the urn after $t\in\N$ draws. Let $\cM_i(t;b,w) = \cU_1(t;b,w)+\cdots+\cU_i(t;b,w)$, noticing that the sum does not include $\cU_0(t;b,w)$. Let $\cS_i(b,w)$ be the random time that color $i$ appears.

Recalling the definition~\eqref{eq:Skn}, the key relation between this urn model and the quantities appearing in Lemma~\ref{lem:maxsize} is
\[
S_{k,n} \coloneqq \max\left\{ |v(T)|: T\in \rT(n) -\rT_k(n)\right\}
\eqdist \max_{i\in\N}\cU_i\left(n - k\ell; (\ell+1)k,1\right).
\] 
We prove the lemma by deducing moment bounds of $S_{k,n}$ from moment bounds of the $\cU_i$. 

For the moment-bounds on $\cU_i(t;b,w)$ (and $\cM_i(t;b,w)$), we consider the following auxiliary $2$-colors P\'olya urn, which is equivalent to the above urn by regarding color $1$ as white and $\{2,3,\ldots\}$-colors as black. At time $0$, the urn contains $b\in\N$ black balls and $w\in\N$ white balls. At time $t\in\N$, a ball is chosen at random from the urn and returned along with an additional ball of the same color. Additionally, if $\ell$ divides $t$, then an additional ball of the chosen color is added. Let $W(t;b,w)$ be the number of white balls in the urn after $t$ draws.
 
\begin{fact}\label{fact:urndist}
Fix $b,w,i\in\N$. Write $\cM_j(\cdot)=\cM_j(\cdot ;b,w)$, $\cU_j(\cdot)=\cU_j(\cdot ;b,w)$, and $\cS_j = \cS_j(b,w)$, for all $j\in\N$. Given $\cS_i$, for integer $t>\cS_i$, 
\begin{equation}\label{eq:mdist}
\E{\cM_i(t)\big\vert \cS_i,\cU_0(\cS_i),\cM_i(\cS_i)} \eqdist W\left( t-\cS_i; \cU_0(\cS_i) , \cM_i(\cS_i) \right)
\end{equation}
and
\begin{equation}\label{eq:ucondmdist}
\E{\cU_i(t)\big\vert \cM_i(t),\cM_i(\cS_i)} \eqdist W\left(\cM_i(t)-\cM_i(\cS_i); \cM_i(\cS_i)-1,1 \right).
\end{equation}
\end{fact}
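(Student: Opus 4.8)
The plan is to obtain both identities by regrouping the colors of the modified infinite-colors urn into two-color urns, mirroring the de Finetti argument of \citet{PRR} behind our \refL{lem:conditurns}. For \eqref{eq:mdist}, declare \emph{white} $=$ colors $\{1,\dots,i\}$ and \emph{black} $=$ color $0$ together with all colors $>i$. A new color is introduced only at a step whose index is a multiple of $\ell$, so $\cS_i$ is itself a multiple of $\ell$ (with the convention $\cS_1=0$), and at time $\cS_i$ no color exceeding $i$ is present; hence the white and black populations at time $\cS_i$ are exactly $\cM_i(\cS_i)$ and $\cU_0(\cS_i)$. I would then check that from step $\cS_i+1$ onward the pair (white, black) evolves by the rules defining $W$: a ball is drawn uniformly, and the drawn macro-color gains one ball at a generic step and two balls at a step whose index is a multiple of $\ell$ --- for white, the second copy is the extra ball of rule~(ii); for black, it is the reinforcing ball together with the newly created color (which is black in our regrouping) of rule~(i). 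Since $\ell$ divides $\cS_i$, shifting the clock by $\cS_i$ preserves the set of multiples of $\ell$, so the reinforcement schedule of $W$ is reproduced and \eqref{eq:mdist} follows.

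For \eqref{eq:ucondmdist} I would split white further into color $i$ (one ball at time $\cS_i$) and colors $\{1,\dots,i-1\}$ ($\cM_i(\cS_i)-1$ balls at time $\cS_i$), and condition on the entire trajectory of the white total $\cM_i(\cdot)$ produced above. Each time the white macro-color is drawn one sub-samples a color proportionally to its current count and reinforces it by the prescribed amount ($1$ or $2$); this is a two-color urn started from $\bklr{\cM_i(\cS_i)-1,\ 1}$. Following \citet[proof of Lemma~2.2]{PRR} --- a de Finetti-type representation together with a re-indexing that counts the increments of the white total one at a time --- this sub-urn can be identified with $W\bklr{\cM_i(t)-\cM_i(\cS_i);\ \cM_i(\cS_i)-1,\ 1}$; averaging out the white trajectory then leaves only the conditioning on $\cM_i(t)$ and $\cM_i(\cS_i)$, which is \eqref{eq:ucondmdist}.

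The step I expect to be the crux is the second one. In \citet{PRR} the macro-urn is \emph{classical}, each draw reinforcing by exactly one, so the induced sub-urn is again classical and the de Finetti representation applies verbatim; here the rule~(ii) double reinforcement makes the sub-urn inherit an \emph{irregular} pattern of doubled steps --- precisely those white-draws that land on a multiple of $\ell$. To reduce this to the regular schedule of $W$ one has to exploit that the event ``the drawn white ball has color $i$'' is independent of the event ``the current global time is a multiple of $\ell$'', together with $\ell\mid\cS_i$; making this reduction precise is the main technical point. Should the pathwise argument prove awkward, an alternative is to establish \eqref{eq:mdist} and \eqref{eq:ucondmdist} directly by induction --- on $t$ for the former, on $\cM_i(t)$ for the latter --- by matching the conditional probability generating function of $\cU_i(t)$ with that of the corresponding $W$-urn, which sidesteps the combinatorial bookkeeping at the price of a longer computation.
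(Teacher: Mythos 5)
Your derivation of \eqref{eq:mdist} is sound and is exactly the regrouping the paper intends: with white $=$ colours $\{1,\dots,i\}$ and black $=$ colour $0$ together with all colours $>i$, the drawn macro-colour is reinforced by $1$ generically and by $2$ at global multiples of $\ell$ (in rule~(i) the immigrant colour $p+1>i$ is black in the regrouping, in rule~(ii) the extra ball matches the chosen colour), and since $\ell\mid\cS_i$ the shifted reinforcement schedule coincides with that of $W$.

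For \eqref{eq:ucondmdist} the difficulty you identify is genuine, and the independence you invoke does not resolve it. The sub-urn on colours $\{1,\dots,i\}$ is a two-colour urn with a \emph{random} reinforcement sequence: it gains $2$ precisely at those sub-draws that fall on a global multiple of $\ell$, whereas $W(m;b,w)$ doubles at the fixed positions $\ell,2\ell,\dots$ of its own clock. Urns with variable reinforcement are not exchangeable --- permuting the increment sizes $r_1,\dots,r_n$ changes the law of the final white count --- so the de Finetti re-indexing you import from \citet{PRR} does not carry over, and in fact \eqref{eq:ucondmdist} is not an exact distributional identity. Concretely, take $\ell=2$, $(b,w)=(1,1)$, $i=2$, and condition on $\cS_2=2$, $\cM_2(\cS_2)=2$ (so $\cU_0(\cS_2)=3$) and $\cM_2(4)=4$: a direct enumeration gives $\cU_2(4)$ uniform on $\{1,3\}$, whereas $W(2;1,1)$ puts mass $\frac13,\frac16,\frac16,\frac13$ on $\{1,2,3,4\}$; conditioning instead on $\cM_2(4)=5$ the two sides even have different supports. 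What is actually needed, and used, in the proof of Lemma~\ref{lem:ucondbd} is only the conditional moment bound $\E{\cU_i(t)^p\mid\cdots}\le\E{W\bklr{\cM_i(t)-\cM_i(\cS_i);\cM_i(\cS_i)-1,1}^p}$, equivalently a conditional stochastic domination $\cU_i(t)\stackrel{st}{\le}W\bklr{\cM_i(t)-\cM_i(\cS_i);\cM_i(\cS_i)-1,1}$. A correct argument should aim at that one-sided comparison (e.g., via a coupling in which the $W$-urn's extra doubled steps only inflate the white count) rather than at the equality in distribution that \eqref{eq:ucondmdist} asserts; your proposed generating-function route would run into the same obstruction, since the identity it targets is false.
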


\refFt{fact:urndist} can be shown similarly as for \refL{lem:conditurns} so we omit the proof.

\begin{lem}\label{lem:momentbd}
Fix $b,w\in\N$. For all non-negative integers $t$ and $p$, there exists $f_{p,\ell}>0$ not depending on $b,w$ such that
\begin{equation}\label{in:momentbd}
\E{W(t;b,w)^p}\le f_{p,\ell} w^p\cdot \left(1+\frac{t}{\alpha(b+w)}\right)^p\cdot \left\{1+ \log\left(1+\frac{t}{\alpha(b+w)}\right)\right\}\eqqcolon d_{t,p}.
\end{equation}
\end{lem}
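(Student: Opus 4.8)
The plan is to control the rising factorial moments $a_p(t):=\E{W(t;b,w)^{(p)}}$, where $W^{(p)}:=W(W+1)\cdots(W+p-1)$, and then to conclude via $W^p\le W^{(p)}$. The case $p=0$ is trivial, so fix an integer $p\ge1$. The structural fact that makes everything tractable is that the total number of balls at time $t$,
\[
N(t):=b+w+t+\lfloor t/\ell\rfloor,
\]
is \emph{deterministic}; moreover, since $1/\alpha=1+1/\ell$ we have $1\le N(t)/N(0)\le 1+t/(\alpha(b+w))$. Abbreviating $\tau_t:=t/(\alpha(b+w))$, the target bound reads $a_p(t)\le f_{p,\ell}\,w^p(1+\tau_t)^p\{1+\log(1+\tau_t)\}$.

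First I would record the one-step recursion, conditioning on the configuration $\cF_t$ after $t$ draws. If $\ell\nmid(t+1)$, only one ball of the drawn colour is added, and using $(W+1)^{(p)}=\tfrac{W+p}{W}W^{(p)}$ one gets the exact identity
\[
\E{W(t+1)^{(p)}\mid\cF_t}=W(t)^{(p)}\bigl(1+p/N(t)\bigr);
\]
if $\ell\mid(t+1)$, two balls of the drawn colour are added, and using $(W+2)^{(p)}=\tfrac{(W+p)(W+p+1)}{W(W+1)}W^{(p)}$ one gets
\[
\E{W(t+1)^{(p)}\mid\cF_t}=W(t)^{(p)}\bigl(1+2p/N(t)\bigr)+\frac{p(p-1)}{N(t)}\cdot\frac{W(t)^{(p)}}{W(t)+1}.
\]
The last term is harmless: for integers $p\ge1,\ W\ge1$ one has $W^{(p)}/(W+1)\le p\,W^{(p-1)}$. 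Since $N(t)$ is deterministic, taking expectations gives $a_p(t+1)\le a_p(t)(1+\rho_t)+\varepsilon_t$, where $\rho_t\in\{p/N(t),\,2p/N(t)\}$ and $\varepsilon_t$ equals $0$ or $p^2(p-1)a_{p-1}(t)/N(t)$ according to whether $\ell\mid(t+1)$.

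Next I would unroll this. By Bernoulli's inequality, $1+\rho_t\le(N(t+1)/N(t))^p$ in both cases (as $N$ increases by $1$ or by $2$), so the growth factors telescope, $\prod_{s<t}(1+\rho_s)\le(N(t)/N(0))^p$, and hence
\[
a_p(t)\le w^p\Bigl(\tfrac{N(t)}{N(0)}\Bigr)^p+\sum_{\substack{0\le s<t\\ \ell\mid(s+1)}}\frac{p^2(p-1)}{N(s)}\,a_{p-1}(s)\Bigl(\tfrac{N(t)}{N(s+1)}\Bigr)^p.
\]
Now induct on $p$: the base cases $p=0$ ($a_0\equiv1$) and $p=1$ ($a_1(t)=wN(t)/N(0)$ exactly) are immediate. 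For the step, substitute the inductive bound for $a_{p-1}(s)$, use $1+\tau_s\le 2N(s)/N(0)$ and the monotonicity $1+\log(1+\tau_s)\le 1+\log(1+\tau_t)$, and use $N(s+1)\ge N(s)$ to reduce the sum to $\sum_{s\ge0}N(s)^{-2}\le\sum_{s\ge0}(b+w+s)^{-2}\le 2/(b+w)$; this bounds the sum by $C_{p,\ell}f_{p-1,\ell}w^{p-1}(N(t)/N(0))^p\{1+\log(1+\tau_t)\}$ with $C_{p,\ell}$ depending only on $p,\ell$. Since $w\ge1$ gives $w^{p-1}\le w^p$ and $N(t)/N(0)\le 1+\tau_t$, the choice $f_{p,\ell}:=1+C_{p,\ell}f_{p-1,\ell}$ closes the induction, and $f_{p,\ell}$ does not depend on $b,w$. (This argument in fact gives the stronger bound without the logarithmic factor, which a fortiori yields~\eqref{in:momentbd}.)

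The one delicate point is the treatment of the doubling steps $\ell\mid(t+1)$: the $+2$ reinforcement produces the correction term $\tfrac{p(p-1)}{N(t)}\tfrac{W^{(p)}}{W+1}$ in the recursion, which must be expressed cleanly in terms of the $(p-1)$-st rising factorial moment so that the induction on $p$ goes through, together with careful tracking of the constants $f_{p,\ell}$. Everything else is a routine P\'olya-urn moment computation, made straightforward by the determinism of $N(t)$.
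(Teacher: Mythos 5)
Your proof is correct, and while it shares the paper's overall skeleton (a one-step conditional moment recursion in $t$, unrolled and then closed by induction on $p$), the implementation is genuinely different. The paper works with ordinary moments: its one-step bound $\E{W_t^p\mid W_{t-1}}\le W_{t-1}^p\bigl(1+\tfrac{p(1+\I{\ell\mid t})}{n_{t-1}}\bigr)+W_{t-1}^{p-1}\tfrac{e_p}{n_{t-1}}$ carries an error term at \emph{every} step, the growth products $\gamma_{m,t-1}$ are controlled via $1+x\le e^x$ and ratio estimates, and the error sum is compared to an integral, producing the statement with the logarithmic factor. You instead track rising factorial moments, which turns the non-immigration steps into the exact identity $\E{W(t+1)^{(p)}\mid\cF_t}=W(t)^{(p)}(1+p/N(t))$ and confines the correction term $\tfrac{p(p-1)}{N(t)}\tfrac{W^{(p)}}{W+1}\le \tfrac{p^2(p-1)}{N(t)}W^{(p-1)}$ to the immigration steps $\ell\mid(t+1)$; the deterministic ball count $N(t)$ then lets the growth factors telescope exactly to $(N(t)/N(m))^p$ via Bernoulli, and the summability of $\sum_s N(s)^{-2}$ closes the induction. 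This buys a cleaner argument and in fact the stronger, log-free bound $\E{W(t;b,w)^p}\le f_{p,\ell}\,w^p(1+t/(\alpha(b+w)))^p$, which a fortiori gives \eqref{in:momentbd}; the paper's route is more brute-force but needs no factorial-moment identities. One small repair: in your unrolled inequality the initial term should be $a_p(0)(N(t)/N(0))^p$ with $a_p(0)=w(w+1)\cdots(w+p-1)\le p^p w^p$, not $w^p(N(t)/N(0))^p$; this only changes $f_{p,\ell}$ by a $p$-dependent constant (e.g.\ take $f_{p,\ell}=p^p+C_{p,\ell}f_{p-1,\ell}$), so the conclusion, including the final passage from $W^{(p)}$ back to $W^p$ via $W^p\le W^{(p)}$, is unaffected.
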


\begin{proof}
Write $n_j = b+w  + j + \lfloor j/\ell\rfloor$ and $W_{j} = W(j; b,w)$ for all non-negative integer $j$. Since for all $t\in\N$,
\[
\E{W_t\big\vert W_{t-1}} = \frac{W_{t-1}}{n_{t-1}}\cdot \left(W_{t-1}+1+\I{\ell | t}\right)+ \frac{n_{t-1}-W_{t-1}}{n_{t-1}}\cdot W_{t-1},
\]
it follows that, for all $p\in\N$ and some positive constants~$e_p',~e_p$ only depending on~$p$ (noting that $W_{t}\geq 1$ for all~$t$),
\begin{align}
&\E{W_t^p\big\vert W_{t-1}}\notag\\
&=~ 
\frac{W_{t-1}}{n_{t-1}}\cdot \left\{(W_{t-1}+1)^p\cdot \I{\ell \nmid t} + (W_{t-1}+2)^p\cdot \I{\ell \vert t} \right\}  + \frac{n_{t-1} - W_{t-1}}{n_{t-1}}\cdot W_{t-1}^p\notag\\
&=~ \frac{W_{t-1}}{n_{t-1}}\cdot \left[ (W_{t-1}+1)^p + \I{\ell | t}\cdot \left\{ (W_{t-1}+2)^p - (W_{t-1}+1)^p\right\}\right] + \frac{n_{t-1}- W_{t-1}}{n_{t-1}}\cdot W_{t-1}^p\notag\\
&\le ~ W_{t-1}^p+ \frac{W_{t-1}^p}{n_{t-1}}\cdot \left\{ (W_{t-1}+1)(1+1/W_{t-1})^{p-1} + p\cdot \I{\ell | t}\cdot (1+2/W_{t-1})^{p-1}  - W_{t-1}\right\}\notag\\
&\le~ W_{t-1}^p+ \frac{ W_{t-1}^p}{n_{t-1}}\cdot \left\{
(W_{t-1}+1) \left(1 + \frac{p-1}{W_{t-1}} + \frac{e_{p}'}{W_{t-1}^2}\right) + p\cdot \I{\ell\vert t}\cdot \left(1+\frac{e_{p}'}{W_{t-1}}\right) - W_{t-1}\right\}\notag\\
&\le~ W_{t-1}^p \cdot \left( 1+ \frac{p(1+\I{\ell\vert t}) }{n_{t-1}}
\right) + W_{t-1}^{p-1}\cdot \frac{e_{p}}{n_{t-1}}.\label{in:pbd}
 \end{align}

Next, we use induction on $p$ to prove the bound (\ref{in:momentbd}). Clearly (\ref{in:momentbd}) is true for $p=0$. Now assume that it holds for $p-1$ where $p\in\N$ and for all non-negative integer $t$. We are to show that it also holds for $p$. Averaging (\ref{in:pbd}) over $W_{t-1}$ and using the induction hypothesis yields that, for all $t\in\N$,
\begin{align*}
\E{W_t^p}\le \E{W_{t-1}^p} \cdot\left(1+\frac{p(1+\I{\ell|t})}{n_{t-1}}\right)
+ d_{t-1,p-1}\cdot \frac{e_{p}}{n_{t-1}},
\end{align*}
recalling the definition of $d_{t-1,p-1}$ from (\ref{in:momentbd}). For $m\le t$, let
\be{
\gamma_{m,t-1}=\prod_{j=m}^{t-1} \left(1 + \frac{p(1+\I{\ell\vert (j+1)})}{n_j}\right),
}
where $\prod_{j=t}^{t-1}(\cdot)\coloneqq 1$. Applying the inequality above recursively, we find
\begin{align}
\E{W_t^p} \le&~  w^p\gamma_{0,t-1} + \sum_{i=0}^{t-1} \frac{ d_{i,p-1} e_{p}}{n_i} \gamma_{i+1,t-1}.\label{in:p}
\end{align}
To bound and simplify $\gamma_{m,t-1}$, write $t'= \lfloor \frac{t}{\ell}\rfloor$, $m'=\floor{\frac{m}{\ell}}$ and using the inequality $1+x\le e^x$ for all $x\in\R$,
\begin{align*}
\gamma_{m,t-1}&\leq~\prod_{j=\ell  m' }^{t-1} \left(1 + \frac{p(1+\I{\ell\vert (j+1)})}{n_{j}}\right)\\
&\le~\prod_{r=m'}^{t'} \left(1 + \frac{2p}{b+w+ r(\ell+1)}\right)
\prod_{s=m'}^{t'} \prod_{j=1}^{\ell-1} \left( 1+ \frac{p}{b+w+s(\ell+1)+j}\right) \\
&\le~\prod_{r=m'}^{t'} \left(1 + \frac{2p}{b+w+ r(\ell+1)}\right)
\prod_{s=m'}^{t'} \left( 1+ \frac{p}{b+w+s(\ell+1)+1}\right)^{\ell-1} \\
&\le~ \exp\left(2p \sum_{r=m'}^{t'} \frac{1}{b+w+r(\ell+1)}\right)
\cdot \exp\left(p(\ell-1) \sum_{s=m'}^{t'} \frac{1}{b+w+s(\ell+1)+1}\right) \\
&\leq~\exp\left(p(\ell+1) \sum_{r=m'}^{t'} \frac{1}{b+w+r(\ell+1)}\right).
\end{align*}
Since $\int_0^a \frac{dx}{\gamma+\beta x} = \frac{1}{\beta} \log\left(1+\frac{a\beta}{\gamma}\right)$, $t'\le \frac{t}{\ell}$, 
and (using $b+w\geq 1$) there is a constant $f_{\ell}'$ not depending on $b,w,m,t$ such that
\be{
\frac{\alpha(b+w) +m}{\alpha(b+w) + \ell m'}\leq f_{\ell}',
}
the inequalities above give, for some positive constant $ f_{p,\ell}'$,
\ben{
\gamma_{m,t-1} \le f_{p,\ell}' \left( \frac{b+w+t'(\ell+1)}{b+w+m'(\ell+1)}\right)^{p}\le f_{p,\ell}' \left(\frac{\alpha(b+w)+t}{\alpha(b+w)+m}\right)^{p}. \label{in:gam}
}
Now, since $n_i = b+w + i + \lfloor i/\ell\rfloor$ and
\[
d_{i,p-1}=f_{p-1,\ell} w^{p-1}\cdot \left(1+\frac{i}{\alpha(b+w)}\right)^{p-1} \cdot \log\left(1+\frac{i}{\alpha(b+w)}\right),
\]
we have
\begin{align}
&\sum_{i=0}^{t-1} \frac{d_{i,p-1}e_{p}}{n_i}\gamma_{i+1,t-1} \notag \\
&\qquad\leq~\left(f_{p-1,\ell}  e_p f_{p,\ell}'\right) \cdot w^p \cdot \left(1+\frac{t}{\alpha(b+w)}\right)^p \cdot 
\sum_{i=0}^{t-1}\left(\frac{\alpha(b+w)}{\alpha(b+w)+i}\right)\frac{\log\left(1+\frac{i}{\alpha(b+w)}\right)}{n_i} \notag\\
&\qquad \leq \left(f_{p-1,\ell}  e_p f_{p,\ell}'\right) \cdot w^p \cdot \left(1+\frac{t}{\alpha(b+w)}\right)^p \cdot \log\left(1+\frac{t}{\alpha(b+w)}\right), \label{in:small}
\end{align}
where we have used again $\log(1+x)\leq x$ and that the sum is bounded by the appropriate integral.
Combining \eqref{in:p},~\eqref{in:gam} with $m=0$, and \eqref{in:small} implies that there is a constant $f_{p,\ell}$ such that
\[
\E{W_t^p} \le f_{p,\ell} \cdot w^p\cdot \left(1+ \frac{t}{\alpha(b+w)}\right)^{p} \cdot \left\{ 1+ \log \left(1+ \frac{t}{\alpha (b+w)}\right) \right\},
\]
and now (\ref{in:momentbd}) follows by induction. 
\end{proof}

\begin{lem}\label{lem:ucondbd}
Fix $b,w,p,i\in\N$. There is a constant $g_{p,\ell}$ independent of $b,w,t$ such that for $t\geq i\ell$
\[
\E{\cU_i(t;b,w)^p}\le g_{p,\ell} \left(\frac{\alpha(b+w)-1+t}{\alpha(b+w)-1+i\ell}\right)^{p+1}.
\]
\end{lem}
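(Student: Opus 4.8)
\emph{Proof strategy.} The plan is to bound $\E{\cU_i(t;b,w)^p}$ by two nested conditionings, each reducing to the two-colour moment estimate of \refL{lem:momentbd}. Fix $b,w\in\N$ and abbreviate $\cU_j(\cdot)=\cU_j(\cdot;b,w)$, $\cM_j(\cdot)=\cM_j(\cdot;b,w)$, $\cS_j=\cS_j(b,w)$. Since $\cU_i(t)=0$ unless $\cS_i\le t$, it is enough to bound $\E{\cU_i(t)^p\I{\cS_i\le t}}$; moreover $\cS_i$ is always a multiple of $\ell$ with $\cS_i\ge(i-1)\ell$, being the time of the $(i-1)$:th black draw at a multiple of $\ell$, which is the event that creates color $i$. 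I will also use $\cM_i(\cS_i)\ge w+i-1\ge1$ and the fact that $\cU_0(\cS_i)+\cM_i(\cS_i)$ is the total ball count at time $\cS_i$, namely $b+w+\cS_i+\lfloor\cS_i/\ell\rfloor$.

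For the inner step, condition on $\cG:=\sigma\pran{\cS_i,\cU_0(\cS_i),\cM_i(\cS_i),\cM_i(t)}$. By~\eqref{eq:ucondmdist} (strengthened, via the same argument proving \refFt{fact:urndist}, to the statement that given $\cM_i(t)$ and $\cM_i(\cS_i)$ the variable $\cU_i(t)$ is independent of $(\cS_i,\cU_0(\cS_i))$), $\cU_i(t)$ is conditionally distributed as $W\pran{\cM_i(t)-\cM_i(\cS_i);\,\cM_i(\cS_i)-1,\,1}$, so \refL{lem:momentbd} gives
\[ \E{\cU_i(t)^p\mid\cG}\le f_{p,\ell}\,\pran{1+\tfrac{\cM_i(t)-\cM_i(\cS_i)}{\alpha\cM_i(\cS_i)}}^{p}\klg{1+\log\pran{1+\tfrac{\cM_i(t)-\cM_i(\cS_i)}{\alpha\cM_i(\cS_i)}}}. \]
Writing $R:=\cM_i(t)/\cM_i(\cS_i)\ge1$, the bracketed quantity lies between $R$ and $R/\alpha$, so using $1+\log x\le2\sqrt x$ for $x\ge1$ the right-hand side is at most $C_{p,\ell}R^{p+1/2}$. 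For the outer step, condition on $\cG':=\sigma\pran{\cS_i,\cU_0(\cS_i),\cM_i(\cS_i)}$. By~\eqref{eq:mdist}, $\cM_i(t)\eqdist W\pran{t-\cS_i;\cU_0(\cS_i),\cM_i(\cS_i)}$ given $\cG'$, and because $\cS_i$ is a multiple of $\ell$ and $\alpha(\ell+1)/\ell=1$, the argument $1+\tfrac{t-\cS_i}{\alpha(\cU_0(\cS_i)+\cM_i(\cS_i))}$ appearing in \refL{lem:momentbd} collapses to $Q:=\tfrac{\alpha(b+w)+t}{\alpha(b+w)+\cS_i}$. Hence $\E{R^{p+1}\mid\cG'}\le f_{p+1,\ell}\,Q^{p+1}(1+\log Q)$, and conditional Lyapunov's inequality together with $(p+1)\tfrac{2p+1}{2p+2}=p+\tfrac12$ and $1+\log x\le2\sqrt x$ give $\E{R^{p+1/2}\mid\cG'}\le\bigl(\E{R^{p+1}\mid\cG'}\bigr)^{(2p+1)/(2p+2)}\le C_{p,\ell}Q^{p+1}$. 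Combining the two steps, $\E{\cU_i(t)^p\mid\cG'}\le C_{p,\ell}Q^{p+1}$; since $Q\le\tfrac{\alpha(b+w)+t}{\alpha(b+w)+(i-1)\ell}$ on $\{\cS_i\le t\}$, taking expectations and then using $\tfrac{\alpha(b+w)+t}{\alpha(b+w)+(i-1)\ell}\le2\ell\cdot\tfrac{\alpha(b+w)-1+t}{\alpha(b+w)-1+i\ell}$ (valid because $b+w\ge2$, so $\alpha(b+w)-1\ge0$, and $t\ge i\ell\ge1$) yields the claim with $g_{p,\ell}=C_{p,\ell}(2\ell)^{p+1}$.

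The crux, and the step I expect to be most delicate, is the exponent bookkeeping: each of the two appeals to \refL{lem:momentbd} contributes a logarithmic factor, and clearing both of them naively (through $1+\log x\le x$) would cost two surplus powers of $Q$, one too many for the target exponent $p+1$. The fix is to absorb each logarithm into a \emph{half}-power of its argument and to interpolate, so that the half-power lost in the inner step is exactly recouped by estimating $\E{R^{p+1/2}}$ in place of $\E{R^{p+1}}$ in the outer step. The two remaining points needing (routine) care are the algebraic identity $\alpha(\cU_0(\cS_i)+\cM_i(\cS_i))=\alpha(b+w)+\cS_i$, which makes the telescoping clean and relies on $\cS_i$ being a multiple of $\ell$, and the minor strengthening of~\eqref{eq:ucondmdist} used to justify the tower property in the inner step.
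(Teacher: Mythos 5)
Your proof is correct and follows essentially the same double-conditioning argument as the paper: both reduce twice to \refL{lem:momentbd} via~\eqref{eq:ucondmdist} and then~\eqref{eq:mdist}, absorb each logarithmic factor into a half-power of its argument, and recover the surplus half-power through the Lyapunov interpolation exponent $(p+\tfrac12)/(p+1)$, using the same clean identity $\alpha(\cU_0(\cS_i)+\cM_i(\cS_i))=\alpha(b+w)+\cS_i$ that holds because $\cS_i$ is a multiple of $\ell$. As a minor plus, you correctly use $\cS_i\geq(i-1)\ell$ (the paper asserts $\cS_i\geq i\ell$, a small slip since colour $i$ can first appear at time $(i-1)\ell$) and absorb the resulting off-by-one into the constant via a factor $2\ell$.
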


\begin{proof}
Write $\cM_j(\cdot) = \cM_j(\cdot;b,w)$, $\cU_j(\cdot) = \cU_j(\cdot;b,w)$, and $\cS_j=\cS_j(b,w)$, for all $j\in\N$.  It follows from~\eq{eq:ucondmdist} and~\eq{in:momentbd} that for all $t\geq1$ (note that the inequality trivially holds for $t\leq \cS_i$),
\ba{
\IE&\bkle{\cU_i(t)^p\big\vert \cM_i(t),\cS_i, \cM_i(\cS_i),\cU_0(\cS_i)} \\
&\qquad \le  f_{p,\ell} \left(1+\frac{\cM_i(t)-\cM_i(\cS_i)}{\alpha \cM_i(\cS_i)}\right)^{p}\left(1+\log\left(1+\frac{\cM_i(t)-\cM_i(\cS_i)}{\alpha \cM_i(\cS_i)}\right)\right) \\
&\qquad \leq  g_{p,\ell}' \left(\frac{\cM_i(t)}{\cM_i(\cS_i)}\right)^{p+1/2},
}
where the last inequality follows from $\alpha\leq 1/2$ and $\cM_i(t)\geq \cM_i(\cS_i)$,
$\log(x)\leq \sqrt{x}$ for $x\geq 1$, and $g_{p,\ell}'\geq 1$ (WLOG) is a constant.  
Averaging over $\cM_i(t)$ and using Jensen's inequality then yields
\[
\E{\cU_i(t)^{p}\big\vert \cS_i,\cM_i(\cS_i),\cU_0(\cS_i)} 
\le \frac{g'_{p,\ell}}{\cM_i(\cS_i)^{p+1/2}}\cdot \E{\cM_i(t)^{p+1}\Big\vert \cS_i, \cM_i(\cS_i),\cU_0(\cS_i)}^{\frac{p+1/2}{p+1}}.
\]
Furthermore, applying (\ref{eq:mdist}) and (\ref{in:momentbd}), we have for $t\geq \cS_i$,
\ba{
\IE&\bkle{\cM_i(t)^{p+1}\big\vert \cS_i, \cM_i(\cS_i),\cU_0(\cS_i)} \\
&\quad \le  f_{p+1,\ell}\cM_i(\cS_i)^{p+1}\cdot \left(1+\frac{t-\cS_i}{\alpha(\cU_0(\cS_i)+ \cM_i(\cS_i))}\right)^{p+1} \\	
&\hspace{2in} \times\left(1+\log\left(1+\frac{t-\cS_i}{\alpha(\cU_0(\cS_i)+ \cM_i(\cS_i))}\right)\right).
}
Now, since $\cS_i\ge i\ell$ and
$\cU_0(\cS_i)+\cM_i(\cS_i)=b+w+\cS_i + \lfloor \cS_i/\ell\rfloor$ is the total number of balls in the urn at time~$\cS_i$, we
have for $t\geq i\ell$ and some positive constant $g_{p,\ell}''$,
\[
\E{\cM_i(t)^{p+1}\Big\vert \cS_i, \cM_i(\cS_i),\cU_0(\cS_i)}
\le
g_{p,\ell}''\cM_i(\cS_i)^{p+1} \cdot \left(\frac{\alpha(b+w)-1+t}{\alpha(b+w)-1+i\ell}\right)^{p+3/2}.
\]
Altogether, 
\begin{align*}
&\E{\cU_i(t)^p\big\vert \cS_i, \cU_0(\cS_i),\cM_i(\cS_i)}\\
&\qquad\quad\le~ \frac{g_{p,\ell}}{\cM_i(\cS_i)^{p+1/2}} \cdot \left(\cM_i(\cS_i)^{p+1} \cdot \left(\frac{\alpha(b+w)-1+t}{\alpha(b+w)-1+i\ell}\right)^{p+3/2}\right)^{\frac{p+1/2}{p+1}}\\
&\qquad\quad\le~ g_{p,\ell}\left(\frac{\alpha(b+w)-1+t}{\alpha(b+w)-1+i\ell}\right)^{p+1}.
\end{align*}
Since the bound does not depend on $\cS_i$, $\cU_0(\cS_i)$, or $\cM_i(\cS_i)$, the lemma follows.
\end{proof}

\begin{proof}[{\bf Proof of \refL{lem:maxsize}}]
 Given $b,w,p\in\N$, it follows from a union bound, Markov's inequality, and \refL{lem:ucondbd} that   for any $x>0$,
\begin{align*}
&\p{\max_{i\in\N} \cU_i(t;b,w) \ge t/x}\le~  \frac{\sum_{i\in\N}\E{\cU_i(t;b,w)^p }}{t^p /x^{p}}\\
&\hspace{1in}\le~ g_{p,\ell}\frac{x^p}{t^p}
\sum_{1\le i\le \lfloor t/\ell\rfloor}\left(\frac{\alpha(b+w)-1+t}{\alpha(b+w)-1+i\ell}\right)^{p+1}\\
&\hspace{1in}\le~ g_{p,\ell}\frac{x^p}{t^p} 
\cdot \frac{(\alpha(b+w)-1+t)^{p+1}}{\ell}\int_0^{t} \frac{1}{(\alpha(b+w)-1+x)^{p+1}} dx\\
&\hspace{1in}\le ~ g_{p,\ell} \frac{x^p(\alpha(b+w)-1+t) ((\alpha(b+w)-1)t^{-1}+1)^p}{\ell p (\alpha(b+w))^p} \\
&\hspace{1in}\leq ~ g_{p,\ell}\frac{x^p(\alpha(b+w)-1+t) (t^{-1}+(\alpha(b+w))^{-1})^p}{\ell p }
\end{align*}
Next, note that for any $k,n\in\N$ with $n\ge k\ell$,
\[
S_{k,n} \coloneqq \max\left\{ |v(T)|: T\in \rT(n) -\rT_k(n)\right\}
\eqdist \max_{i\in\N}\cU_i\left(n - k\ell; (\ell+1)k,1\right).
\]
Then, for any $p\in\N$ and for sufficiently large $(n-k\ell)$, taking $t=n-k\ell$ and $x=k^{\frac{8}{3(\ell+1)}}$ in the above inequalities and recalling that $k=o(n)$ yields
\begin{align*}
\p{S_{k,n}\geq \frac{n-k\ell}{k^{\frac{8}{3(\ell+1)}}}}= O\left(k^{\frac{8p}{3(\ell+1)}-p} \cdot n  \right)
\end{align*}
Now using that $k(n)=\Omega\left(n^{1/100}\right)$, $\ell\geq 2$, and fixing $p\geq9000(\ell+1)$,
we have 
\[
\p{S_{k(n),n} \ge n\cdot k(n)^{-\frac{8}{3(\ell+1)}}} = O(n^{-2}).\qedhere
\] 
\end{proof}

\section{Proofs for generalized gamma distributions}\label{sec:moment}

\begin{proof}[{\bf Proof of \refL{lem:bd}}]
For all $\veps\in(0,3/7]$, $-\frac{\veps}{3}+\frac{\veps^2}{4}-\frac{\veps^3}{5}+\cdots< \frac{\veps}{3}+\frac{\veps^2}{4}+\frac{\veps^3}{5}+\cdots\le \frac{\veps}{3}+\frac{\veps^2}{3}+\cdots = \frac{\veps}{3(1-\veps)}\le \frac14$, so $(1-\veps)^{-k} = e^{-k\log(1-\veps)} = e^{k\left(\veps + \frac{\veps^2}{2} +\frac{\veps^3}{3}+\cdots\right)} \le e^{k\left(\veps + \frac{3\veps^2}{4}\right)}$, and $(1+\veps)^{-k} =e^{-k\log(1+\veps)}= e^{k\left(-\veps+\frac{\veps^2}{2}-\frac{\veps^3}{3}+\cdots \right)}< e^{k\left(-\veps+\frac{3\veps^2}{4}\right)}$. Next, let $E_1,E_2,\ldots$ be independent Exponential$(1)$-variables. For $k\in\N$, we may write $C_k = (E_1+\cdots+ E_k)^{\frac{1}{\ell+1}}$. By Markov's inequality and the previous derivation, for all $k\in\N$ and $\veps\in(0,3/7]$,
\begin{align*}
\p{C_k^{\ell+1}-k \ge \veps k}
\le&~ \frac{e^{-\veps k}\left(\E{e^{\veps E_1}}\right)^k}{e^{\veps^2 k}}
 = e^{-\veps k-\veps^2 k}\cdot (1-\veps)^{-k} \\
\le&~ e^{-\veps k- \veps^2 k}\cdot e^{\veps k + 3\veps^2 k/4}
 = e^{-\veps^2 k/4}.
\end{align*}
Similarly, for $\veps\in(0,3/7]$, $\p{k-C_k^{\ell+1}\ge \veps k} \le \frac{e^{\veps k}\left(\E{e^{-\veps E_1}}\right)^k}{e^{\veps^2k}}
\le e^{\veps k-\veps^2 k}\cdot (1+\veps)^{-k}
\le e^{\veps k - \veps^2 k} \cdot e^{-\veps k + 3\veps^2 k/4} = e^{-\veps^2 k/4}$. Now, take $\veps = k^{-1/4}$. For $k=\veps^{-4}\ge(7/3)^4$, the bounds above and the triangle inequality imply $\p{\left\vert C_k^{\ell+1} - k\right\vert \ge k^{3/4}}
\le 2e^{-\veps^2 k/4} = 2e^{-k^{1/2}/4}$.
\end{proof}

\begin{cor}\label{cor:bd}
For all integer $k\ge (7/3)^4$, with probability greater than $1-2 e^{-k^{1/2}/4}$,
\[
 k^{-\frac{1}{\ell+1}} \cdot \left(1-\frac{2}{\ell+1}\cdot  k^{-\frac14}\right)< \frac{1}{C_k} 
 < k^{-\frac{1}{\ell+1}} \cdot \left(1+\frac{2}{\ell+1}\cdot k^{-\frac14}\right).
\]
\end{cor}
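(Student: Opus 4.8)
The plan is to condition on the high-probability event of \refL{lem:bd} and then reduce the statement to an elementary deterministic inequality. Write $x = k^{-1/4}$ and note that the hypothesis $k \ge (7/3)^4$ is exactly $x \le 3/7$. By \refL{lem:bd}, with probability greater than $1 - 2e^{-k^{1/2}/4}$ we have $\abs{C_k^{\ell+1}-k} < k^{3/4}$, i.e.\ $k(1-x) < C_k^{\ell+1} < k(1+x)$. Since $u \mapsto u^{-1/(\ell+1)}$ is decreasing on $(0,\infty)$, this yields
\[
k^{-1/(\ell+1)}(1+x)^{-1/(\ell+1)} < \frac{1}{C_k} < k^{-1/(\ell+1)}(1-x)^{-1/(\ell+1)},
\]
so it is enough to establish, for every $\ell \in \N$ and every $x \in (0,3/7]$, the two scalar bounds $(1+x)^{-1/(\ell+1)} > 1 - \frac{2}{\ell+1}x$ and $(1-x)^{-1/(\ell+1)} < 1 + \frac{2}{\ell+1}x$.

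For the first bound I would just use Bernoulli's inequality: as $-1/(\ell+1) \in (-1,0)$ and $x > -1$, one has $(1+x)^{-1/(\ell+1)} \ge 1 - \frac{x}{\ell+1} > 1 - \frac{2x}{\ell+1}$. For the second bound I would recycle the logarithm estimate already proved for \refL{lem:bd}: for $x \in (0,3/7]$,
\[
-\log(1-x) = x + \tfrac{x^2}{2} + \tfrac{x^3}{3} + \cdots \le x\bigl(1 + \tfrac{3x}{4}\bigr) \le \tfrac{3}{2}x,
\]
so $(1-x)^{-1/(\ell+1)} \le \exp\bigl(\tfrac{3x}{2(\ell+1)}\bigr)$. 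Setting $t = \tfrac{3x}{2(\ell+1)}$ we have $t \le \tfrac{9}{28} < \tfrac13$, hence (using the elementary inequality $e^t \le 1 + t + t^2$ valid on $[0,1]$, which is a one-line calculus check) $e^t \le 1 + t + t^2 < 1 + \tfrac43 t = 1 + \tfrac{2x}{\ell+1}$, the strict inequality coming from $t^2 < \tfrac13 t$.

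There is no genuine obstacle here: the proof is purely a matter of tracking numerical constants so that everything stays inside the range $x \le 3/7$, which is exactly where the series bound inherited from \refL{lem:bd} is valid, and the margins (e.g.\ $\tfrac{9}{28} < \tfrac13$) all close comfortably. Combining the two scalar bounds with the displayed two-sided inequality for $1/C_k$, on the event of probability greater than $1 - 2e^{-k^{1/2}/4}$, completes the proof.
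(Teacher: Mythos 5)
Your proof is correct; the reduction to two deterministic scalar inequalities after conditioning on the event of Lemma~\ref{lem:bd} is exactly the paper's first step, but you then handle the two bounds differently. The paper expands $(1\pm k^{-1/4})^{-1/(\ell+1)}$ in a binomial/Taylor series and truncates, using the side condition $\bigl(1+\tfrac{1}{\ell+1}\bigr)k^{-1/4}<1$ to control the tail. You instead prove the lower bound by a one-line application of the generalized Bernoulli inequality (convexity of $u\mapsto (1+u)^r$ for $r\le 0$), and prove the upper bound by passing through $\exp$: you recycle the estimate $-\log(1-x)\le x+\tfrac34 x^2$ that already appears inside the proof of Lemma~\ref{lem:bd}, then combine it with $e^t\le 1+t+t^2$ and $t^2<t/3$ (justified by $t\le 9/28<1/3$, which is where the hypothesis $k\ge(7/3)^4$, i.e.\ $x\le 3/7$, is used). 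Both routes are elementary; yours has the small advantage of reusing a bound already established, and of making the numerical margins fully explicit, whereas the paper's terse ``Taylor expansion then yields'' leaves more to the reader. One cosmetic note: your derivation implicitly assumes $\ell\ge 1$ when asserting $t\le 9/28$, which is consistent with the paper's global hypothesis $\ell\in\N$, so there is no gap.
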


\begin{proof}
It follows from \refL{lem:bd} that, for large enough $k\in\N$, $\p{\left\vert C_k^{\ell+1} - k\right\vert \ge k^{3/4}}
\le 2e^{-k^{1/2}/4}$. So with probability greater than $1-2 e^{-k^{1/2}/4}$,
\[
k^{-\frac{1}{\ell+1}}\cdot \left(1+k^{-1/4}\right)^{-\frac{1}{\ell+1}}
< \frac{1}{C_k} < k^{-\frac{1}{\ell+1}}\cdot \left(1-k^{-1/4}\right)^{-\frac{1}{\ell+1}}.
\]
 Taylor expansion then yields that 
\[
\left(1-k^{-1/4}\right)^{-\frac{1}{\ell+1}}=1+\frac{1}{\ell+1} k^{-1/4} + \frac{1}{\ell+1}\left(1+\frac{1}{\ell+1}\right) \frac{k^{-1/2}}{2!}+\cdots.
\]
For the rest of the proof we assume that $k>(3/2)^4$, then $(1+\frac{1}{\ell+1}) k^{-1/4}<1$. It follows that $\left(1-k^{-1/4}\right)^{-\frac{1}{\ell+1}} < 1+ \frac{2}{\ell+1}\cdot k^{-1/4}$. Similarly, $\left(1+k^{-1/4}\right)^{-\frac{1}{\ell+1}} >1-\frac{2}{\ell+1}\cdot k^{-1/4}$. 
\end{proof}

\begin{proof}
[{\bf Proof of \refC{cor:bd2}}]
Without loss of generality, assume that $\ell$ divides $n$. Since $k(n)^\alpha =o(n^{\alpha-1/4})$, it follows from \refL{lem:bd} and \refC{cor:bd} that, with probability greater than $ 1-2\sum\limits_{m=k(n)}^{n/\ell} e^{-m^{1/2}/4}$, for sufficiently large $n$ we simultaneously have $\left\vert C_{k(n)}-k(n)^{\frac{1}{\ell+1}}\right\vert < 10 k(n)^{\frac{1}{\ell+1}-\frac14}$ and
\begin{align*}
\sum_{m=k(n)}^{n/\ell} \frac{1}{C_m} >&~ \sum_{m=k(n)}^{n/\ell} m^{-\frac{1}{\ell+1}}
- \frac{2}{\ell+1}\sum_{m=k(n)}^{n/\ell} m^{-\frac{1}{\ell+1}-1/4}\notag\\
\ge&~ \frac{1}{\alpha}\cdot\left\{ \left(\frac{n}{\ell}\right)^{\alpha} - k(n)^{\alpha}\right\}- \frac{2}{\ell+1}\cdot \frac{1}{\alpha-1/4}\cdot \left\{\left(\frac{n}{\ell}\right)^{\alpha-1/4} - k(n)^{\alpha-1/4}\right\}\notag\\
\ge&~ \frac{1}{\alpha}\cdot \left(\frac{n}{\ell}\right)^{\alpha} - \frac{5}{n^{1/4}}\cdot \left(\frac{n}{\ell}\right)^{\alpha},
\end{align*}
and $\sum_{m=k(n)}^{n/\ell} \frac{1}{C_m}< \frac{1}{\alpha}\cdot \left(\frac{n}{\ell}\right)^\alpha + \frac{5}{n^{1/4}} \cdot \left(\frac{n}{\ell}\right)^\alpha$.
\end{proof}

\section*{Acknowledgement}
\addtocontents{toc}{\SkipTocEntry}

We thank Adrian R\"{o}llin for suggesting inserting random vertices into the real trees.

\end{document}